\theoremstyle{plain}
\newtheorem{theorem}{Theorem} 
\newtheorem{lemma}{Lemma}
\numberwithin{equation}{section}
\theoremstyle{definition}
\theoremstyle{remark}
\title{Small solutions to homogeneous and inhomogeneous cubic equations}
\author[C. Bernert]{Christian Bernert}
\address{Mathematisches Institut, Bunsenstraße 3-5, 37073 Göttingen, Germany}
\email{christian.bernert@uni-goettingen.de}
\begin{document}
\maketitle

\begin{abstract}
    We study the solubility of cubic equations over the integers. Assuming a necessary congruence condition, the existence of such solutions is established when the $h$-invariant of $C$ is at least $14$, improving on work of Davenport-Lewis and generalizing the method from Heath-Brown's seminal work in the homogeneous case. We also provide an upper bound on the smallest solution, polynomially in the height of the coefficients. The method also yields new results in the homogeneous case where we generalize and improve on previous work of Browning, Dietmann and Elliott.
\end{abstract}

\section{Introduction}

We study the existence of integer solutions to equations $\phi(\mathbf{x})=0$ where $\phi \in \mathbb{Z}[x_1,\dots,x_n]$ is a (not necessarily homogeneous) integer polynomial of degree $3$. We denote the homogeneous parts of degrees $3,2,1$ and $0$ by $C,Q,L$ and $N$, respectively, so that we can write
\[\phi(\mathbf{x})=C(\mathbf{x})+Q(\mathbf{x})+L(\mathbf{x})+N.\]
Much work has been done in the case of homogeneous equations where Heath-Brown \cite{heath2007cubic} proved that for $n \ge 14$, non-trivial solutions $\mathbf{x} \in \mathbb{Z}^n \backslash \{\mathbf{0}\}$ always exist. Conjecturally, this should be true already in the wider range $n \ge 10$. It is known that no congruence obstructions appear for $n \ge 10$ and that the bound is sharp in this respect.

In the inhomogeneous case, the situation is more complicated. First of all, it is easy to produce examples of cubic polynomials in any number of variables which have a congruence obstruction, e.g.
\[\phi(\mathbf{x})=2C_1(\mathbf{x})+1\]
where $C_1(\mathbf{x}) \in \mathbb{Z}[x_1,\dots,x_n]$ is an arbitrary cubic form.

It is therefore clearly necessary to stipulate the following \textit{Necessary Congruence Condition}:

(NCC) For any prime power $p^k$, the equation $\phi(\mathbf{x}) \equiv 0 \pmod{p^k}$ has a solution.

Slightly more subtly, this necessary condition is still not sufficient in the inhomogeneous case, even in the case of many variables, as the following example of Watson shows: The equation
\[\phi(\mathbf{x})=(2x_1-1)(1+x_1^2+x_2^2+\dots+x_n^2)+x_1x_2\]
is easily seen to satisfy the NCC as soon as $n \ge 5$, but clearly does not admit any integral solution as the absolute value of the first term is always at least $1+x_1^2+x_2^2$ and hence larger than the absolute value of the second term.

It is therefore necessary to make further assumptions on the polynomial $\phi$. Hitherto, this has been realized in two variations:

Browning and Heath-Brown \cite{bhb09}, following work of Heath-Brown \cite{heath_brown_ten} in the homogeneous case, have shown that a solution to $\phi(\mathbf{x})=0$ exists if $\phi$ satisfies the NCC, $n \ge 10$ and the cubic part $C$ is non-singular.

Instead, we will focus on an older approach by Davenport and Lewis \cite{dl64} who introduced the $h$-invariant of a cubic form $C$ to be the least positive integer such that
\[C(\mathbf{x})=\sum_{i=1}^h L_i(\mathbf{x})Q_i(\mathbf{x})\]
for appropriate linear forms $L_1,\dots,L_h$ and quadratic forms $Q_1,\dots,Q_h$. Equivalently, $n-h$ is the largest dimension of a linear subspace contained in the cubic hypersurface defined by the equation $C(\mathbf{x})=0$.

Davenport and Lewis then managed to show, following work of Davenport in the homogeneous case \cite{davenport63}, that a solution to $\phi(\mathbf{x})=0$ exists if $\phi$ satisfies the NCC and $h(C) \ge 17$.

Pleasants \cite{pleasants1975cubic} extended this to cover the range $h(C) \ge 16$.

Given the progress on Davenport's work \cite{davenport63} in the homogenous case made by Heath-Brown \cite{heath2007cubic}, it is natural to ask whether his method can be applied to extend the range to $h(C) \ge 14$.

We achieve this goal, indeed proving an asymptotic formula for the number of solutions in a bounded region. To this end, let us define the counting function
\[N(P)=N(P,\mathcal{B})=\#\{\mathbf{x} \in \mathbb{Z}^n \cap P\mathcal{B}: \phi(\mathbf{x})=0\}.\]
where $\mathcal{B} \subset \mathbb{R}^n$ is a box of the shape
\[\mathcal{B}=\prod_{1 \le i \le n} [z_i-1,z_i+1]\]
for some vector $\mathbf{z} \in \mathbb{R}^n$ with $\vert \mathbf{z}\vert \ge 2$ (so that $\mathcal{B}$ does not contain the origin) and $P\mathcal{B}=\{P\mathbf{x}: \mathbf{x} \in \mathcal{B}\}$. We then prove the following:

\begin{theorem}
\label{thm.h=14asymptotic}
Assume that $\phi=C+Q+L+N$ is of degree $3$, non-degenerate, satisfies the NCC and that $h(C) \ge 14$. If the centre $\mathbf{z}$ of the box $\mathcal{B}$ is a suitable non-singular point of the hypersurface $C(\mathbf{x})=0$, then
\[N(P)=(1+o(1))\mathfrak{S} \cdot \mathfrak{I} \cdot P^{n-3}, \quad P \to \infty\]
where $\mathfrak{S}$ and $\mathfrak{I}$ are the usual singular series and singular integral, respectively. We have $\mathfrak{S}>0$ and $\mathfrak{I}>0$. In particular, there is a solution $\mathbf{x} \in \mathbb{Z}^n$ to $\phi(\mathbf{x})=0$.
\end{theorem}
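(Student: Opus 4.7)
The plan is to establish the asymptotic via the Hardy--Littlewood circle method, extending the homogeneous treatment of Heath-Brown \cite{heath2007cubic} to handle the lower-order terms of $\phi$. Write
\[N(P) = \int_0^1 S(\alpha)\,d\alpha, \qquad S(\alpha) = \sum_{\mathbf{x} \in \mathbb{Z}^n \cap P\mathcal{B}} e\bigl(\alpha \phi(\mathbf{x})\bigr),\]
and dissect the unit interval into major arcs $\mathfrak{M}$ consisting of neighbourhoods of rationals $a/q$ with $q$ in an appropriate range, and complementary minor arcs $\mathfrak{m}$. The thresholds should be chosen to mirror those in \cite{heath2007cubic}, since the decisive input will be a bound on $S(\alpha)$ over $\mathfrak{m}$ that exploits the hypothesis $h(C) \ge 14$.

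On the minor arcs I would estimate $S(\alpha)$ by two successive applications of Weyl differencing. The key observation is that the doubly-differenced exponential has phase
\[\alpha\bigl(\phi(\mathbf{x}+\mathbf{h}_1+\mathbf{h}_2) - \phi(\mathbf{x}+\mathbf{h}_1) - \phi(\mathbf{x}+\mathbf{h}_2) + \phi(\mathbf{x})\bigr),\]
and the difference operator annihilates every monomial of degree less than $3$: what remains is a linear polynomial in $\mathbf{x}$ whose leading part is the symmetric trilinear form associated to $C$, evaluated at $(\mathbf{x},\mathbf{h}_1,\mathbf{h}_2)$, while $Q$, $L$ and $N$ contribute only $\mathbf{x}$-independent phases together with corrections of strictly lower order. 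Consequently the minor-arc problem reduces to the geometry of $C$ alone, and the refined estimates of \cite{heath2007cubic} tied to $h(C) \ge 14$ should give $\int_{\mathfrak{m}} |S(\alpha)|\,d\alpha = o(P^{n-3})$.

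On the major arcs I would approximate $S(a/q+\beta)$ by a product of a complete exponential sum $S_{a,q}(\phi)$ modulo $q$ and an oscillatory integral $I(\beta)$ over $P\mathcal{B}$, via partial summation combined with standard lattice-point arguments. Summing and integrating yields, up to admissible errors, the truncated singular series and singular integral; their tails are controlled by the same minor-arc machinery. Positivity of $\mathfrak{S}$ then follows from the NCC combined with absolute convergence of the singular series, the latter being a consequence of the bounds flowing from $h(C) \ge 14$. Positivity of $\mathfrak{I}$ uses the hypothesis that the centre $\mathbf{z}$ of $\mathcal{B}$ is a non-singular real zero of $C$: after the substitution $\mathbf{x} \mapsto P\mathbf{x}$ one has $P^{-3}\phi(P\mathbf{x}) \to C(\mathbf{x})$ uniformly on $\mathcal{B}$, so $\mathfrak{I}$ is controlled by a transversal surface integral over $\{C=0\} \cap \mathcal{B}$ at the smooth point $\mathbf{z}$.

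The principal obstacle is the minor-arc estimate: even though Weyl differencing extracts the trilinear form associated to $C$, one must argue uniformly in $\mathbf{h}_1, \mathbf{h}_2$ that the perturbations introduced by $Q$, $L$ and $N$ do not erode the gain obtained from $h(C) \ge 14$, and that the passage from the full sum $S(\alpha)$ to the differenced sum does not lose the savings in the tricky intermediate range of $q$ and $\beta$. A secondary issue is the interpretation of the word \emph{suitable} in the statement: beyond the smoothness of $\mathbf{z}$ on $\{C = 0\}$, one presumably needs a mild compatibility condition ensuring that $\mathbf{z}$ is approached by $p$-adic solutions for every $p$, and one must verify that such a $\mathbf{z}$ always exists under $h(C) \ge 14$, which should follow from the geometry of cubic hypersurfaces with large $h$-invariant.
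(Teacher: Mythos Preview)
Your overall architecture is right, and your observation that the lower-order terms of $\phi$ disappear after double differencing is exactly what the paper uses. However, the decisive minor-arc step is not what you describe. You propose to bound $S(\alpha)$ by ``two successive applications of Weyl differencing'' and then invoke the refined estimates of \cite{heath2007cubic}. But classical Weyl differencing is precisely Davenport's method, and the paper states explicitly that it only suffices for $h(C)\ge 16$; it is \emph{insufficient} for $h(C)\ge 14$. Heath-Brown's innovation, which the paper adapts, is a \emph{van der Corput} differencing combined with a mean-square average $\int |S(\alpha)|^2\,d\alpha$ over short intervals. One writes $S(\alpha)$ as an averaged shifted sum, applies Cauchy--Schwarz to obtain sums $T(\mathbf{h},\beta)$, and then exploits the integration over $\beta$ before taking absolute values. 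This produces a bound in terms of a counting function $N(\alpha,P,\mathbf{h})$ of lattice points with $\|6\alpha B_i(\mathbf{h},\mathbf{d})\|$ small, and the extra averaging over $q$ is what allows one to replace the condition $B_i=0$ by the weaker $q\mid B_i$ in part of the range. Without this device the minor arcs do not give $o(P^{n-3})$ under the hypothesis $h(C)\ge 14$; you would be stuck at $16$.

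A smaller point: your guess about the meaning of ``suitable'' is off. There is no $p$-adic compatibility condition on $\mathbf{z}$. The word refers purely to real-analytic requirements for the singular integral: one needs quantitative lower bounds $\partial C/\partial x_1\gg\partial_1$ and $\partial C/\partial x_2\gg\partial_2$ on all of $\mathcal{B}$, so that the change of variables $t=C(\mathbf{x})$ and a Fourier-inversion argument give $\mathfrak I>0$ with explicit error. The paper constructs such a $\mathbf z$ directly from the assumption $h(C)\ge 14$ by a Siegel's-Lemma argument applied to the coefficients of $C$. Separately, positivity of $\mathfrak S$ is not automatic from NCC plus absolute convergence: one needs to upgrade mere congruence solubility to the existence of \emph{non-singular} $p$-adic solutions, which requires a quantitative Hensel argument (and famously fails for $n=14$, though $h(C)\ge 14$ forces $n\ge 15$).
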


Note that we do not assume that $C$ is non-degenerate, but that $C$ does not vanish identically. Also note that the restriction $h(C) \ge 14$ automatically implies $n \ge 15$ since Heath-Brown's result is equivalent to saying that a cubic form in $14$ variables has $h(C) \le 13$.

In the culmination of a long series of papers, Watson \cite{watson}  has established the existence of integer solutions to $\phi(\mathbf{x})=0$ under the assumptions $4 \le h(C) \le n-3$ and $n \ge 15$, by an alternative argument. Combining this with Theorem \ref{thm.h=14asymptotic}, we obtain the following:

\begin{theorem}
\label{thm.watsoncor}
Assume that $\phi=C+Q+L+N$ satisfies the NCC and that $h(C) \ge 4$ and $n \ge 16$. Then there is a solution $\mathbf{x} \in \mathbb{Z}^n$ to the equation $\phi(\mathbf{x})=0$.
\end{theorem}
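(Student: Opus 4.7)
The plan is to derive Theorem \ref{thm.watsoncor} by combining Watson's result with Theorem \ref{thm.h=14asymptotic}, splitting into two exhaustive cases according to the size of $h(C)$.

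In the first case, suppose $4 \le h(C) \le 13$. Since the hypothesis $n \ge 16$ gives $n-3 \ge 13$, we have $h(C) \le n-3$ and also $n \ge 15$, so Watson's theorem applies directly and produces an integer solution to $\phi(\mathbf{x}) = 0$.

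In the second case, suppose $h(C) \ge 14$. The intention is to invoke Theorem \ref{thm.h=14asymptotic}. For this one has to verify the auxiliary hypotheses there, namely the non-degeneracy of $\phi$ and the existence of a suitable non-singular centre $\mathbf{z}$ of some box $\mathcal{B}$ with $\vert \mathbf{z} \vert \ge 2$ lying on the hypersurface $C(\mathbf{x})=0$. Non-degeneracy of $\phi$ can be ensured by a standard preliminary reduction: if $\phi$ depends effectively on fewer variables after an integral linear substitution, one passes to the polynomial on the smaller set, noting that the $h$-invariant of its cubic part is unchanged, the NCC persists under this substitution, and $h(C) \ge 14$ forces the reduced number of variables to still be large enough. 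As for the centre $\mathbf{z}$, the form $C$ is not identically zero since $h(C) \ge 14 \ge 1$, and its zero set in $\mathbb{R}^n$ contains non-singular real points; rescaling yields $\vert \mathbf{z} \vert \ge 2$.

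The main obstacle is the last verification: that such a centre $\mathbf{z}$ can be chosen simultaneously to be \emph{suitable} in the sense of Theorem \ref{thm.h=14asymptotic} and to be compatible with the local data furnished by the NCC, so that the resulting singular integral $\mathfrak{I}$ and singular series $\mathfrak{S}$ are both strictly positive. This is a real and $p$-adic compatibility issue, but once the existence of non-singular real and $p$-adic points on $C=0$ is granted (which follows from $h(C) \ge 14$ together with the NCC), it is resolved by standard perturbation arguments, and Theorem \ref{thm.h=14asymptotic} then delivers the desired integer solution, completing the proof.
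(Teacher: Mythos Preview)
Your case split is exactly the paper's one-line argument: for $n \ge 16$, either $h(C) \le 13 \le n-3$ (so Watson applies) or $h(C) \ge 14$ (so Theorem~\ref{thm.h=14asymptotic} applies). The worry in your final paragraph is unnecessary, however: $\mathfrak{S}>0$ and $\mathfrak{I}>0$ are already part of the \emph{conclusion} of Theorem~\ref{thm.h=14asymptotic} --- $\mathfrak{S}$ depends only on $\phi$ and is forced by the NCC, while the existence of a suitable centre $\mathbf{z}$ (and hence $\mathfrak{I}>0$) is supplied within the paper by Lemma~\ref{lemma_good_nonsingular_point}(a) from $h(C)\ge 14$ alone --- so there is no real/$p$-adic compatibility issue to arrange.
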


Indeed, under the assumption $n \ge 16$, one of the conditions $h(C) \le n-3$ and $h(C) \ge 14$ is always satisfied, allowing us to deduce Theorem \ref{thm.watsoncor}.

Following a line of investigation revitalized by the work of Browning, Dietmann and Elliott \cite{bde12}, we also provide a bound on the smallest solution $\mathbf{x}$ in Theorem \ref{thm.h=14asymptotic} uniform in the coefficients of the polynomial $\phi$:

\begin{theorem}
\label{thm.smallsolution_h=14}
With the same assumptions as in Theorem \ref{thm.h=14asymptotic}, there exists a vector $\mathbf{x} \in \mathbb{Z}^n$ with $\phi(\mathbf{x})=0$ and
\[\max_i \vert x_i\vert \ll M^{6407n^2},\]
where $M$ is the maximum of the absolute values of the coefficients of $\phi$. The implicit constant is absolute.
\end{theorem}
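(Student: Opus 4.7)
The plan is to make every step of the proof of Theorem \ref{thm.h=14asymptotic} quantitatively effective in $M$ and then optimize the resulting exponent. The circle method produces an asymptotic $N(P) = \mathfrak{S} \cdot \mathfrak{I} \cdot P^{n-3} + E(P, M)$, and the smallest solution is obtained by choosing $P = M^A$ minimally so that the main term dominates the error and $N(P) \geq 1$.

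First I would establish explicit polynomial lower bounds in $M$ for $\mathfrak{S}$ and $\mathfrak{I}$. For $\mathfrak{S}$, one uses the NCC together with effective Hensel-type arguments: a solution of $\phi \equiv 0$ modulo a suitable $p^k$ is upgraded to a positive $p$-adic density, with the bad primes confined to $p \leq p_0(M)$ controlled by a polynomial in $M$, while the convergence of the tail is handled by a Weyl-type estimate at each prime. One expects $\mathfrak{S} \gg M^{-c_1 n}$ for an explicit $c_1$. For $\mathfrak{I}$, a preliminary step is to establish that a suitable non-singular real point $\mathbf{z}$ of $C(\mathbf{x})=0$ exists with $|\mathbf{z}| \ll M^{c_2}$; this can be done by an elementary search inside the cubic hypersurface (the $h$-invariant hypothesis guarantees the existence of a large family of such points). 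The implicit function theorem, applied on a neighbourhood of $\mathbf{z}$ of controlled size, then yields $\mathfrak{I} \gg M^{-c_3 n}$.

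Second, I would retrace the minor-arc treatment from the proof of Theorem~\ref{thm.h=14asymptotic}, recording the polynomial dependence on $M$ at each Weyl differencing step, van der Corput iteration, and geometry-of-numbers argument. Each time an exponential sum is estimated one picks up a factor that is polynomial in $M$, and these must be inserted back into the choice of major-arc radius $Q = P^{\delta}$ and the auxiliary parameters governing the Davenport--Heath-Brown treatment of the $h$-invariant reduction. Combining the minor-arc bound with the effective major-arc asymptotic and the lower bounds on $\mathfrak{S}$ and $\mathfrak{I}$ produces a threshold $P \gg M^{c_4 n^2}$ beyond which $N(P) \geq 1$, yielding the claim.

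The main obstacle will be the bookkeeping required to arrive at the specific exponent $6407 n^2$. Each of the many parameters in the method, namely the major-arc radius, the allowable height of moduli in the singular series truncation, the exponents governing the second-moment estimates over minor arcs, the dimension thresholds coming from the $h$-invariant decomposition, and the quality of the exponential sum bounds, contributes a power of $M$ that must be balanced against the others. Additional friction arises from the inhomogeneous setting: the gradient of $\phi$ is not homogeneous, so the separation of the cubic part $C$ from the lower-order contributions $Q, L, N$ in the Weyl estimates must be carried out while preserving uniformity in the coefficients of $\phi$, rather than merely in those of $C$. Optimizing the resulting multi-parameter expression is what produces the final exponent.
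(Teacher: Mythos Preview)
Your plan is essentially the paper's own approach: track the $M$-dependence through every step of the circle method, prove effective lower bounds for $\mathfrak{S}$ and $\mathfrak{I}$, and then choose the parameters $P_0,u,Q,P$ as explicit powers of $M$ so that all the accumulated constraints (the paper's $(\mathfrak{M}_i),(\mathfrak{I}_1),(\mathfrak{S}_i),(\mathfrak{m}_i)$) are simultaneously satisfied and $N(P)>0$.

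One point in your sketch is off and matters for the arithmetic. You write that one expects $\mathfrak{S}\gg M^{-c_1 n}$, but in the inhomogeneous case the paper only obtains $\mathfrak{S}(P_0)\gg M^{-292(n^2-1)}$, and this quadratic loss in $n$ is precisely what produces the exponent $6407n^2$. The reason is that the quantitative Hensel step (Lemma~\ref{lem.inhom_nonsing_padic}) gives $\ell_\phi(p)\ll v_p(\Delta(\phi))$ with an absolute constant, and the contribution of the bad primes to the singular series is then
\[
\prod_{p\mid\Delta(\phi)} p^{-(n-1)\ell_\phi(p)}\;\gg\;\Delta(\phi)^{-c(n-1)}\;\gg\;M^{-c(n^2-1)},
\]
since $\Delta(\phi)\ll M^{n+1}$. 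By contrast the singular integral satisfies $\mathfrak{I}\gg M^{-8.5}$ uniformly in $n$, not $M^{-c_3 n}$. So the $n^2$ in the final exponent comes entirely from the singular series, not from balancing several linear-in-$n$ losses as your sketch suggests; if you proceed under the expectation of a linear bound you will not recover the stated constant and will have misidentified the bottleneck.
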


It is natural to ask whether Watson's method can be made uniform in the coefficients as well, allowing us to obtain a uniform version of Theorem \ref{thm.watsoncor}. However, due to the intricate structure of Watson's argument, it is not immediately transparent to the author whether such an extension is possible.

Our method also delivers new results on the smallest solution in the homogeneous case:
\begin{theorem}
\label{thm.smallsolution_hom}
Let $n \ge 14$ and let $C \in \mathbb{Z}[x_1,\dots,x_n]$ be a cubic form. Then there exists a vector $\mathbf{x} \in \mathbb{Z}^n\backslash \{\mathbf{0}\}$ with $C(\mathbf{x})=0$ and
\[\max_i \vert x_i\vert \ll M^{132484}.\]
If additionally we assume $C$ to be non-singular, then for $n=14$ we can ensure that
\[\max_i \vert x_i\vert \ll M^{2049}.\]
\end{theorem}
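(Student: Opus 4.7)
The plan is to specialise the quantitative circle method underlying Theorem \ref{thm.smallsolution_h=14} to the homogeneous case $\phi = C$, keeping track of every implicit constant as an explicit polynomial in the coefficient-height $M$. Writing
\[ N(P) = \int_0^1 S(\alpha) \, d\alpha, \qquad S(\alpha) = \sum_{\mathbf{x} \in \mathbb{Z}^n \cap P\mathcal{B}} e(\alpha C(\mathbf{x})), \]
I would determine the smallest exponent $A$ for which, setting $P = M^A$, the major-arc contribution $\mathfrak{S} \mathfrak{I} P^{n-3}$ strictly dominates the contribution from the minor arcs. Once this occurs, $N(P) \ge 1$, and a nonzero integer solution must lie inside $P\mathcal{B}$.

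For the first assertion I would argue by cases on the $h$-invariant. When $h(C) \ge 14$, which forces $n \ge 15$, a homogeneous rerun of Theorem \ref{thm.h=14asymptotic} with every estimate made polynomial in $M$ yields a Kloosterman-refined minor-arc saving of the shape $P^{n-3-\delta} M^{B_1}$ together with lower bounds $\mathfrak{S}, \mathfrak{I} \gg M^{-B_2}$; balancing these pins down a permissible $P \ll M^{132484}$. When instead $h(C) \le 13$, I would write $C = \sum_{i=1}^{h} L_i Q_i$ over $\mathbb{Z}$ with linear and quadratic forms whose coefficient-heights are polynomial in $M$, so that the $\mathbb{Q}$-subspace $\{L_1 = \ldots = L_h = 0\}$ lies inside $\{C = 0\}$ and has dimension at least $n - 13 \ge 1$. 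A Siegel's-lemma application then produces a nonzero integer point on this subspace with height $\ll M^{O(1)}$, which is comfortably inside the $M^{132484}$ budget.

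For the sharper bound $M^{2049}$ in the case $n = 14$ with $C$ non-singular, one bypasses the $h$-invariant reduction and applies the circle method directly. Non-singularity gives Birch-type Weyl bounds on $S(\alpha)$ whose dependence on $C$ is controlled by the discriminant, which is itself $\ll M^{O(n)}$, while both the singular series and the singular integral admit clean lower bounds via non-singular Hensel lifting and a local implicit-function-style argument at a fixed real point of $\{C = 0\}$. With $n = 14$ fixed, the bookkeeping closes at the stated exponent.

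The principal technical obstacle is to render the Kloosterman / large-sieve refinement from Heath-Brown \cite{heath2007cubic} quantitative in $M$ without sacrificing the power-saving in $P$; the optimisation of the major-arc parameter $Q = P^{\theta}$ against the minor-arc exponent is precisely what pins down the numerical values $132484$ and $2049$. A secondary task, relevant only in the $h(C) \le 13$ branch, is to realise the decomposition $C = \sum L_i Q_i$ achieving $h(C)$ over $\mathbb{Z}$ with polynomially bounded coefficients; this should follow from a height-explicit Minkowski reduction of the natural lattice of admissible representations of $C$, together with the observation that $h$ is defined by a rational condition and hence is achieved over $\mathbb{Q}$.
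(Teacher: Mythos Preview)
Your dichotomy is the wrong one, and this leads to a genuine gap. The paper does not split on whether $h(C) \ge 14$ or $h(C) \le 13$; it splits on whether $C$ satisfies Davenport's Geometric Condition up to scale $M^{\psi}$ (is ``$\psi$-good'') for a carefully chosen $\psi$. If $C$ is $\psi$-good, the circle method with Heath-Brown's mean-square van der Corput differencing (not a Kloosterman refinement) produces a solution with $\lvert \mathbf{x}\rvert \ll M^{2049}$. If $C$ fails to be $\psi$-good at some scale $H \le M^{\psi}$, then Lemma~\ref{lemma_gc_hinvariante} converts that counting failure directly into a small solution of size $\ll M^{97+91\psi}$, via an explicit construction with $r\times r$ minors of the Hessian, partial derivatives of the $(r+1)\times(r+1)$ minors, and Siegel's Lemma applied to a linear system whose coefficients are already height-bounded by construction. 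The exponent $132484$ is exactly $97+91\psi$ for the balancing value $\psi \approx 1454.8$, which in turn is dictated by condition $(\mathfrak{m}_9)$ needed for the circle method.

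Your branch $h(C) \le 13$ instead asks for a decomposition $C = \sum_{i=1}^{h} L_iQ_i$ over $\mathbb{Z}$ with polynomially bounded coefficients, and then a small point on $\{L_1=\dots=L_h=0\}$. This is a real obstacle: the $h$-invariant is a geometric quantity, and there is no obvious reason the minimal decomposition is realised with height polynomial in $M$; your suggested ``Minkowski reduction of the lattice of admissible representations'' is not a known argument and would have to be supplied in full. Worse, since Heath-Brown's theorem forces $h(C) \le 13$ whenever $n=14$, after reducing to $n=14$ your first branch is vacuous and the entire general case rests on this unproven step. The paper sidesteps the issue completely: it never needs an $h$-decomposition with bounded height, because Lemma~\ref{lemma_gc_hinvariante} extracts the small linear space directly from the \emph{quantitative} failure of the rank estimate at a known scale $H$, and it is this scale that controls the height of the resulting solution.
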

This improves on the work of Browning-Dietmann-Elliott \cite{bde12}, who had the same results for $n \ge 17$ and with the exponents $360000$ in the general and $1071$ in the non-singular case. However, as we will explain in more detail later, the result in \cite{bde12}, in particular the lower bound for the singular series, relies on a computational mistake which, if corrected, would yield an exponent larger than our exponent $2049$ and also larger than what our method would yield for $n=17$.

It also improves on work in the author's Master Thesis (unpublished) in which the result for general cubic forms with $n \ge 14$ was established with the slightly larger exponent $141718$. The improvement here comes from a slight variation in the van der Corput differencing argument as well as an improved treatment of the singular integral.

We already record at this point for later convenience that it suffices to prove Theorem \ref{thm.smallsolution_hom} for $n=14$, as we may always set variables to zero. We may also assume that $C$ is non-degenerate since otherwise the result is trivial.

In the result for non-singular forms, we have restricted to the case $n=14$ for convenience. It is clear that the same method would also give similar results for all $n \ge 14$, but here the general case does not immediately reduce to the special case $n=14$, as setting variables to zero could leave us a with a singular form.

We also remark that using the Kloosterman refinement of the circle method, the existence of non-trivial solutions to non-singular cubic forms is known already for $n \ge 10$ due to work of Heath-Brown~\cite{heath_brown_ten} and -- under the assumption of local solubility -- even for $n=9$ due to work of Hooley~\cite{Hooley+1988+32+98}. It would be interesting to work out a bound for the smallest solution using their method and compare it with our result.

\subsection*{Notation}

We use $e(\alpha)=e^{2\pi i\alpha}$ and $e_q(x)=e\left(\frac{x}{q}\right)$ and the notations $\mathcal{O}$ and $\ll$ due to Landau and Vinogradov, respectively. For a subset $\mathcal{A} \subset \mathbb{R}^n$, we use the summation condition $\sum_{\mathbf{x} \in \mathcal{A}}$ as a shorthand for $\sum_{\mathbf{x} \in \mathcal{A} \cap \mathbb{Z}^n}$ i.e. we sum over all the lattice points in the set. For such a vector $\mathbf{x}$ we write $\vert \mathbf{x}\vert:=\max_i \vert x_i\vert$. For a positive number $X>0$, we shall also use the notation $\sum_{\mathbf{x} \le X}$ to mean $\sum_{0 < x_1,\dots,x_n \le X}$ and similarly $\sum_{\mathbf{x} (q)}$ to mean $\sum_{x_1,\dots,x_n \pmod q}$.

\medskip

The letter $\varepsilon$ stands for a sufficiently small positive real number, which by convention may change its value finitely many times. In particular, we may write something like $M^{2\varepsilon} \ll M^{\varepsilon}$. Similarly, the symbol $c$ stands for a sufficiently small constant and we may replace e.g. $c$ by $\frac{c}{2}$ if it is convenient. Implicit constants are usually allowed to depend on $\varepsilon$. If we want to stress that it may depend on a parameter $d$, we write $\mathcal{O}_d$ instead of $\mathcal{O}$. 

\subsection*{Setup}

The general strategy to estimate the counting function $N(P)$ (and hence to prove the existence of integer solutions to $\phi(\mathbf{x})=0$) is to use the Hardy-Littlewood Circle Method. To this end, let
\[S(\alpha)=\sum_{\mathbf{x} \in P\mathcal{B}} e(\alpha \phi(\mathbf{x})).\]
It is then clear by orthogonality that
\[N(P)=\int_0^1 S(\alpha) d\alpha.\]
The next step is to dissect the interval $[0,1]$ of integration into two sets $\mathfrak{M}$ and $\mathfrak{m}$, the major and minor arcs, respectively. In our setup, the major arcs will be defined by taking
\[\mathfrak{M}(a,q)=\left[\frac{a}{q}-\frac{u}{P^3}, \frac{a}{q}+\frac{u}{P^3}\right]\]
for coprime integers $a$ and $q$ with $0 \le a<q \le P_0$ and then defining
\[\mathfrak{M}=\bigcup_{q \le P_0} \bigcup_{(a;q)=1} \mathfrak{M}(a,q)\]
as the major arcs, and the complement $\mathfrak{m}:=[0,1] \backslash \mathfrak{M}$ as the minor arcs. Here, $u$ and $P_0$ are certain parameters which will be chosen as fixed powers of $P$ eventually. In the proofs of Theorem \ref{thm.smallsolution_h=14} and \ref{thm.smallsolution_hom} we will also choose $P$ as a fixed power of $M$ allowing us to deduce $N(P)>0$ and thus establishing the existence of a small solution. We note that these results are certainly true for bounded $M$ (by the above cited results) and so we may assume that $M$ is sufficiently large, if necessary. In the case of Theorem \ref{thm.h=14asymptotic}, we will fix $M$ and let $P \to \infty$.

We note at this point that under the harmless assumption $\frac{2u}{P^3}<\frac{1}{P_0^2}$, the arcs $\mathfrak{M}(a,q)$ will be disjoint. We denote the equivalent assumption $2P_0^2u<P^3$ by $(\mathfrak{M}_1)$ for future reference. Since several more such assumptions will be added in the course of our work, a list of all of them is maintained at the end of the article for the convenience of the reader.

\medskip

We denote the cubic form by $C(\mathbf{x})=\sum_{i,j,k} c_{ijk} x_ix_jx_k$ where we assume the $c_{ijk}$ to be symmetric and integral (as this can be achieved by rescaling the equation with a factor of $6$ if necessary).

For further use let us denote the Hessian matrix of $C$ by $M(\mathbf{x})$, where
\[M(\mathbf{x})_{ij}=\sum_k c_{ijk}x_k,\]
so that the entries of $M(\mathbf{x})\mathbf{y}$ are given by the bilinear forms
\[B_i(\mathbf{x},\mathbf{y})=\sum_{j,k} c_{ijk}x_jy_k.\]
We also denote by $r(\mathbf{x})$ the rank of $M(\mathbf{x})$ and by $r_p(\mathbf{x})$ the $\mathbb{F}_p$-rank of $M(\mathbf{x})$ for a prime $p$.

It will be convenient to assume that the coefficient $c_{111}$ is positive and $\gg M$ and that none of the second partial derivatives of $\phi$ vanish identically. Both of this can always be achieved by a suitable change of coordinates with bounded coefficients.

\section{Davenport's Geometric Condition}

For general cubic forms, an asymptotic formula of the shape $N(P) \asymp P^{n-3}$ cannot always be true, as the example of $C(\mathbf{x})=x_1Q(\mathbf{x})$ for a quadratic form $Q$ with $N(P) \gg P^{n-1}$ clearly shows. On the technical side, to successfully bound the contribution from the minor arcs $\mathfrak{m}$, we need to be able to bound the number of solutions to a certain auxiliary system of bilinear equations.

To this end, slightly varying a definition of \cite{bde12}, let us say that a cubic form $C$ is \textit{$\psi$-good} if the assumption
\begin{equation}
    \label{eq.good_form}
    \#\{\mathbf{x} \in \mathbb{Z}^n: \vert \mathbf{x}\vert<H, r(\mathbf{x})=r\} \ll H^{n-14+r+\varepsilon}
\end{equation}
for all $0 \le r \le n$, holds uniformly in the range $1 \le H \le M^{\psi}$. Note that this estimate is trivially true for $r \ge 14$.

To relate this to the assumptions in our results, we require the following observations: The first is Lemma 28 in \cite{hooley_nonary_2}:

\begin{lemma}
\label{lemma_gc_nonsingular}
If $C$ is non-singular, then
\[\dim \{\mathbf{x}: r(\mathbf{x}) \le r\} \le r.\]
Hence,
\[\#\{\mathbf{x} \in \mathbb{Z}^n: \vert \mathbf{x}\vert<H, r(\mathbf{x})=r\} \ll H^{r+\varepsilon}\]
uniformly over all $H$. In particular, $C$ is $\psi$-good whenever $n \ge 14$.
\end{lemma}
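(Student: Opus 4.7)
My plan is to prove the dimensional bound $\dim V_r \le r$ for the determinantal variety $V_r := \{\mathbf{x} : r(\mathbf{x}) \le r\}$ by an infinitesimal argument on its tangent space, after which the lattice-point count and the $\psi$-goodness for $n \ge 14$ follow by routine means. Let $W$ be any irreducible component of $V_r$; after replacing $r$ by the generic rank of $M$ on $W$ if that is smaller, pick a smooth point $\mathbf{x}_0 \in W$ with $r(\mathbf{x}_0) = r$, and write $K_0 = \ker M(\mathbf{x}_0)$, so $\dim K_0 = n - r$.

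From the standard theory of determinantal varieties, the tangent space to $V_r$ at such a point is
\[T_{\mathbf{x}_0} V_r = \{\mathbf{v} : M(\mathbf{v}) K_0 \subseteq \operatorname{Im}(M(\mathbf{x}_0))\}.\]
Using that $M(\mathbf{x}_0)$ is symmetric (so $\operatorname{Im}(M(\mathbf{x}_0)) = K_0^{\perp}$) together with the identity $\langle M(\mathbf{v})\mathbf{k}_0, \mathbf{k}\rangle = \langle \mathbf{v}, M(\mathbf{k}_0)\mathbf{k}\rangle$, which is immediate from the symmetry of the $c_{ijk}$ in all three indices, this rewrites as
\[T_{\mathbf{x}_0} V_r = \bigl(\operatorname{span}\{M(\mathbf{k})\mathbf{k}' : \mathbf{k}, \mathbf{k}' \in K_0\}\bigr)^{\perp}.\]
Non-singularity of $C$ now enters via the polynomial map $f(\mathbf{k}) := M(\mathbf{k})\mathbf{k} = \tfrac{1}{3}\nabla C(\mathbf{k})$ defined on $K_0$, whose image sits inside the span above. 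Non-singularity forces $f^{-1}(\mathbf{0}) = \{\mathbf{0}\}$, and upper semicontinuity of fibre dimensions then gives $\dim \overline{f(K_0)} = n - r$. Hence $\dim T_{\mathbf{x}_0} V_r \le r$, so $\dim W \le r$.

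The counting bound is a routine corollary: $V_r$ is cut out by the $(r+1) \times (r+1)$ minors of $M(\mathbf{x})$, so its finitely many irreducible components have degrees bounded in terms of $n$ alone. Noether-normalising each component and projecting onto an $r$-dimensional coordinate subspace yields $\mathcal{O}_n(H^r)$ integer points of height below $H$ per component, well within the claimed $H^{r+\varepsilon}$ bound, and the $\psi$-goodness conclusion is then immediate since $r \le n - 14 + r$ when $n \ge 14$. The main obstacle is the lower bound $\dim \overline{f(K_0)} \ge n - r$ extracted from non-singularity; once this algebraic input is in hand, both the tangent-space formula and the lattice-point count are standard.
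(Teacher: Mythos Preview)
Your proof is correct. The paper does not actually prove this lemma; it merely records it as Lemma~28 in Hooley's \emph{On nonary cubic forms}, so there is no in-paper argument to compare against directly.

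That said, your approach is the natural one and is in the spirit of the standard argument: rewriting the Zariski tangent space $T_{\mathbf{x}_0}V_r$ as the orthogonal complement of $\operatorname{span}\{M(\mathbf{k})\mathbf{k}':\mathbf{k},\mathbf{k}'\in K_0\}$ via the full symmetry of the $c_{ijk}$, and then exploiting non-singularity to force the quadratic map $\mathbf{k}\mapsto M(\mathbf{k})\mathbf{k}=\tfrac13\nabla C(\mathbf{k})$ on $K_0$ to have zero-dimensional fibre over the origin and hence $(n-r)$-dimensional image, is exactly how this dimension bound is usually obtained. One small quibble: in your lattice-point sketch, Noether normalisation does not in general yield a \emph{coordinate} projection, so ``projecting onto an $r$-dimensional coordinate subspace'' is not quite right; but the standard fact that an affine variety of dimension $d$ and degree bounded in terms of $n$ contains $\ll_n H^d$ integer points in $[-H,H]^n$ is more than enough here, and the $H^{r+\varepsilon}$ claimed in the lemma is comfortably met.
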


\begin{lemma}
\label{lemma_gc_hinvariante}
If $C$ is not $\psi$-good, then it vanishes on a $(n-13)$-dimensional subspace containing a non-zero element of size $\ll M^{97+91\psi}$. In particular, $h(C) \le 13$.
\begin{proof}
We follow the proof of Lemma 3 in \cite{dl64}. By assumption, for some $H \le M^{\psi}$ and for some $r$, there are more than $H^{n-14+r+\varepsilon}$ points $\mathbf{x}$ with $\vert \mathbf{x}\vert<H$ and $r(\mathbf{x})=r$. Note that this already implies $r \le 13$. This means that for some particular $r \times r$ minor of $M$, the number of points $\mathbf{x}$ with $r(\mathbf{x})=r$ for which this particular minor does not vanish, is more than $H^{r+\varepsilon}$. We can then find $n-r$ linearly independent solutions $\mathbf{y}^{(1)},\dots,\mathbf{y}^{(n-r)}$ for each such $\mathbf{x}$, which indeed depend polynomially on $\mathbf{x}$, since they can be expressed as certain $r \times r$ minors of $M(\mathbf{x})$.

These polynomial solutions satisfy
\[\sum_i \sum_k c_{ijk}x_iy_k^{(p)}(\mathbf{x})=\Delta_{j,p}(\mathbf{x})\]
identically in $\mathbf{x}$ where $\Delta_{j,p}$ is a certain $(r+1) \times (r+1)$-minor of $M(\mathbf{x})$.

Differentiating this identity with respect to $x_{\nu}$, then multiplying by $y_j^{(q)}$ and summing over $j$ we obtain
\begin{equation}
    \label{eq.outcome_detumformung}
    \sum_j\sum_k c_{\nu jk} y_k^{(p)}y_j^{(q)}+\sum_k \Delta_{k,q} \frac{\partial y_k^{(p)}}{\partial x_{\nu}}=\sum_j y_j^{(q)} \frac{\partial \Delta_{j,p}}{\partial x_{\nu}}
\end{equation}
for all $1 \le \nu \le n$ and $1 \le p,q \le n-r$.

Now, since all $\Delta_{k,p}$ vanish on more than $H^{n-14+r+\varepsilon}$ points $\mathbf{x}$ with $\vert \mathbf{x}\vert <H$ it follows that the dimension of the variety described by the vanishing of all these determinants must be at least $n-13+r$. In particular, there must be a point $\mathbf{x}$ where the rank of the Jacobian matrix of these derivatives is at most $13-r$.

We now choose $\mathbf{x}$ as such a point. This means there are numbers $W_{j,p,\tau}$ and $U_{\tau,v}$ such that
\[\frac{\partial \Delta_{j,p}}{\partial x_{\nu}}=\sum_{\tau=1}^{13-r} W_{j,p,\tau} U_{\tau,\nu}.\]
The equation \eqref{eq.outcome_detumformung} now becomes
\[\sum_j \sum_k c_{\nu jk} y_k^{(p)} y_j^{(q)}=\sum_j y_j^{(q)} \sum_{\tau=1}^{13-r} W_{j,p,\tau} U_{\tau,\nu}.\]
Writing
\[\mathbf{Y}=T_1\mathbf{y}^{(1)}+\dots+T_{n-r}\mathbf{y}^{(n-r)}\]
for indeterminates $T_1,\dots, T_{n-r}$, multiplying the previous display by $T_pT_q$ and summing over $q$ we end up with
\[\sum_{j,k} c_{\nu jk} Y_jY_k=\sum_j \sum_{p=1}^{n-r} \sum_{q=1}^{n-r} T_pT_qy_j^{(q)}\sum_{\tau=1}^{13-r} W_{j,p,\tau} U_{\tau,\nu}=\sum_{\tau=1}^{13-r} V_{\tau} U_{\tau,\nu}\]
for certain numbers $V_{\tau}$. Multiplying by $Y_{\nu}$ and summing over $\nu$ we find that
\[C(\mathbf{Y})=\sum_{\tau=1}^{13-r} V_{\tau} \sum_{p=1}^{n-r} T_p \sum_{\nu=1}^n y_{\nu}^{(p)} U_{\tau,\nu}.\]
Note that the interior double sum is a linear form in the $T_p$ for each $\tau$. If all of these $13-r$ linear forms vanish, we see that $C(\mathbf{Y})=0$. But this means that $C$ vanishes on a linear subspace of dimension $(n-r)-(13-r)=n-13$ as desired and hence $h(C) \le 13$.

Finally, we estimate the size of the smallest solution in this subspace. From their definition as $r \times r$-minors we have $\vert \mathbf{y}^{(p)}\vert \ll H^rM^r$. Moreover, the values $U_{\tau,\nu}$ can be chosen as values of $\frac{\partial \Delta_{j,p}}{\partial x_{\nu}}$ and hence are bounded by $M^{r+1}H^r$. The coefficients of the linear system for the $T_i$ are therefore bounded by $M^{2r+1}H^{2r}$.

By an application of Siegel's Lemma, such a system has a non-trivial solution with $T_i \ll (M^{2r+1}H^{2r})^{13-r}$. Finally, this means that there is a non-trivial solution $\mathbf{Y}$ of $C(\mathbf{Y})=0$ satisfying
\[Y \ll M^rH^r (M^{2r+1}H^{2r})^{13-r}.\]
It is now readily checked that this is bounded by $M^{97}H^{91}$ for all choices of $r$. Since $H \le M^{\psi}$ by assumption, the result follows.
\end{proof}
\end{lemma}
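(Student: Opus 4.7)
The plan is to adapt the classical dimension-counting argument of Davenport and Lewis (Lemma 3 in \cite{dl64}) and track the sizes of the resulting null-vectors carefully, so as to produce the explicit dependence $M^{97+91\psi}$. The strategy is to convert the assumed abundance of low-height lattice points of fixed corank into an algebraic dimension statement about minors of $M(\mathbf{x})$, then apply linear algebra and Siegel's lemma.

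If $C$ fails to be $\psi$-good, there exist $r$ and $H \le M^{\psi}$ for which more than $H^{n-14+r+\varepsilon}$ integer points $\mathbf{x}$ of height $<H$ satisfy $r(\mathbf{x}) = r$; the trivial bound forces $r \le 13$. After pigeonholing on a fixed non-vanishing $r \times r$-minor of $M(\mathbf{x})$, I would use Cramer's rule to produce $n-r$ polynomial null-vectors $\mathbf{y}^{(p)}(\mathbf{x})$ of $M(\mathbf{x})$, whose coordinates are themselves $r \times r$-minors of $M(\mathbf{x})$. The polynomial vector $M(\mathbf{x})\mathbf{y}^{(p)}(\mathbf{x})$ then has entries equal to $(r+1)\times(r+1)$-minors $\Delta_{j,p}(\mathbf{x})$, giving the identity
\[
\sum_{i,k} c_{ijk}\, x_i\, y_k^{(p)}(\mathbf{x}) = \Delta_{j,p}(\mathbf{x}).
\]
Differentiating in $x_\nu$, multiplying by $y_j^{(q)}$, and summing over $j$ expresses the bilinear form $\sum_{j,k} c_{\nu jk}\, y_k^{(p)} y_j^{(q)}$ in terms of derivatives of the $\Delta_{j,p}$.

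The key step is the dimension count: since the $\Delta_{j,p}$ vanish at all the lattice points produced above, the variety $\{\Delta_{j,p}=0\}$ must have dimension at least $n-13+r$, and hence there is a point on it where the Jacobian $(\partial\Delta_{j,p}/\partial x_\nu)$ has rank at most $13-r$. Factoring $\partial \Delta_{j,p}/\partial x_\nu = \sum_{\tau=1}^{13-r} W_{j,p,\tau}\, U_{\tau,\nu}$ at such a point, introducing indeterminates $T_1,\dots,T_{n-r}$, and setting $\mathbf{Y} = \sum_p T_p\, \mathbf{y}^{(p)}$, the derived identity collapses to $C(\mathbf{Y}) = \sum_{\tau=1}^{13-r} V_\tau\, L_\tau(\mathbf{T})$ with each $L_\tau$ linear in $\mathbf{T}$. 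The locus $L_1=\dots=L_{13-r}=0$ has codimension $\le 13-r$ in $\mathbf{T}$-space, so $\mathbf{Y}$ ranges over a linear subspace of dimension $\ge n-13$ on which $C$ vanishes; this yields $h(C)\le 13$.

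For the quantitative statement I would estimate $|\mathbf{y}^{(p)}| \ll H^r M^r$ and $|U_{\tau,\nu}| \ll H^r M^{r+1}$, so the linear system $L_\tau = 0$ has coefficients $\ll H^{2r} M^{2r+1}$. Siegel's lemma in dimension $n-r$ with at most $13-r$ constraints then produces a nontrivial $\mathbf{T}$ with $|T_i| \ll (H^{2r} M^{2r+1})^{13-r}$, and hence a nonzero $\mathbf{Y}$ with $|\mathbf{Y}| \ll H^r M^r (H^{2r} M^{2r+1})^{13-r}$. A direct case analysis over $0 \le r \le 13$ shows this is $\ll M^{97} H^{91}$, which combined with $H \le M^{\psi}$ gives the claimed bound. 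I expect the main obstacle to be the dimension-counting step: converting the polynomial lower bound on lattice zeros into the algebraic statement that the variety cut out by the $\Delta_{j,p}$ has dimension $\ge n-13+r$ is what drives the argument, and it is precisely here that the threshold $n-14$ in the definition of $\psi$-good is used.
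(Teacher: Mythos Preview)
Your proposal is correct and follows essentially the same approach as the paper's own proof: both adapt the Davenport--Lewis argument by pigeonholing on a fixed $r\times r$ minor, constructing polynomial null-vectors via Cramer's rule, using the dimension count on the variety $\{\Delta_{j,p}=0\}$ to bound the Jacobian rank by $13-r$, and then applying Siegel's Lemma with the same size estimates to reach the bound $M^{97}H^{91}$. The structure, the key identity obtained by differentiation, and the quantitative bookkeeping all match.
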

Summarizing, we may therefore assume that $C$ is $\infty$-good for the purpose of Theorems \ref{thm.h=14asymptotic} and \ref{thm.smallsolution_h=14} as well as the non-singular case of \ref{thm.smallsolution_hom}. In the general case of Theorem \ref{thm.smallsolution_hom}, we may suppose that $C$ is $\psi$-good for some suitable $\psi$ as otherwise Lemma \ref{lemma_gc_hinvariante} allows us to deduce the existence of a relatively small solution.

\section{The Major Arcs}



We begin our analysis with the contribution from the major arcs. As remarked in \cite{bde12}, using Poisson's Summation Formula instead of the more elementary Euler Summation formula yields a better error term. For the usual problem of establishing an asymptotic formula, this improvement is irrelevant, but for the uniform version it changes the resulting exponent significantly.

The result of that approximation is the following lemma which is proved during the proof of Lemma 5 in \cite{bde12}.

\begin{lemma}
\label{lemma_approx_major_arcs}
Suppose that $f \in \mathbb{Z}[X_1,\dots,X_n]$ is a polynomial of degree $d \ge 3$ such that none of the partial derivatives $\frac{\partial^2 f}{\partial X_i^2}$ vanish identically.

Let $\mathscr{C}=\prod_{i=1}^n [a_i,b_i]$ be a box and put $R_{\mathscr{C}}=\max_i \vert b_i-a_i\vert$. Let $\lambda \in \mathbb{R}$ and $\psi \in (0,1]$ be chosen so that $\vert \lambda \nabla f(\mathbf{x})\vert \le 1-\psi$ for all $\mathbf{x} \in \mathscr{C}$. Then
\[\sum_{\mathbf{x} \in \mathscr{C}} e(\lambda f(\mathbf{x}))=\int_{\mathscr{C}} e(\lambda f(\mathbf{t})) \mathrm{d}\mathbf{t}+\mathcal{O}_d\left(\psi^{-1}R_{\mathscr{C}}^{n-1}\right).\]
\end{lemma}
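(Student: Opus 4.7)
The plan is to apply Poisson summation and bound the resulting Fourier coefficients using the derivative hypothesis together with Markov's inequality for polynomials. Since the endpoints of $\mathscr{C}$ are integers, I would work with the half-integer enlargement $\mathscr{C}' = \prod_{i=1}^n [a_i - \tfrac{1}{2}, b_i + \tfrac{1}{2}]$, on which the sharp cutoff $\mathbf{1}_{\mathscr{C}'}$ avoids the lattice. Poisson summation then yields
\[\sum_{\mathbf{x} \in \mathscr{C} \cap \mathbb{Z}^n} e(\lambda f(\mathbf{x})) = \sum_{\mathbf{m} \in \mathbb{Z}^n} I_{\mathbf{m}}, \qquad I_{\mathbf{m}} := \int_{\mathscr{C}'} e\bigl(\lambda f(\mathbf{t}) - \mathbf{m}\cdot\mathbf{t}\bigr)\,d\mathbf{t}.\]
The $\mathbf{m}=\mathbf{0}$ term differs from the target $\int_{\mathscr{C}} e(\lambda f)\,d\mathbf{t}$ only by the thin boundary shell $\mathscr{C}' \setminus \mathscr{C}$ of volume $\ll R_{\mathscr{C}}^{n-1}$, which is absorbed into the error.

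For each $\mathbf{m} \neq \mathbf{0}$, I would pick the coordinate $i = i(\mathbf{m})$ with $\vert m_i \vert = \vert \mathbf{m}\vert \ge 1$ and study the phase derivative $\phi'(t_i) := \lambda\,\partial_i f(\mathbf{t}) - m_i$. The hypothesis gives $\vert\phi'(t_i)\vert \ge \vert m_i\vert - (1-\psi) \ge \psi$, with the stronger bound $\ge \vert m_i\vert/2$ whenever $\vert m_i\vert \ge 2$. Since $\phi'$ is a polynomial of degree at most $d-1$ in $t_i$, it has at most $d-2$ critical points, so $[a_i - \tfrac{1}{2}, b_i + \tfrac{1}{2}]$ decomposes into $\mathcal{O}_d(1)$ intervals of monotonicity (the assumption that no $\partial^2 f/\partial X_i^2$ vanishes identically rules out a trivial degeneracy here). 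For $\vert\mathbf{m}\vert = 1$, van der Corput's first derivative test on each monotonic piece yields $\vert\int e(\phi(t_i))\,dt_i\vert \ll_d \psi^{-1}$; combining with the trivial $R_{\mathscr{C}}^{n-1}$ bound on the remaining $n-1$ integrations and the $\mathcal{O}(1)$ count of such $\mathbf{m}$ produces exactly the target contribution $\mathcal{O}_d(\psi^{-1} R_{\mathscr{C}}^{n-1})$.

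For $\vert \mathbf{m}\vert \ge 2$ the naive first derivative test only gives $\ll R_{\mathscr{C}}^{n-1}/\vert m_i\vert$, whose dyadic tail $\sum_M M^{n-2} R_{\mathscr{C}}^{n-1}$ diverges. To extract the required extra decay I would integrate by parts $K = n+2$ times in $t_i$. The crucial quantitative input is Markov's inequality for polynomials: since $\lambda\,\partial_i f$ is a polynomial of degree $\le d-1$ bounded by $1-\psi$ on $[a_i, b_i]$, one has $\vert \lambda\,\partial_i^k f\vert \ll_{d,k} R_{\mathscr{C}}^{1-k}$. Together with $\vert\phi'\vert \ge \vert m_i\vert/2$, this controls the entire cascade of boundary and bulk terms generated by the repeated IBP and produces $\vert I_{\mathbf{m}}\vert \ll_d R_{\mathscr{C}}^{n-1}/\vert m_i\vert^K$. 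The resulting tail $\sum_{\vert\mathbf{m}\vert \ge 2} R_{\mathscr{C}}^{n-1}/\vert m_i\vert^K \ll_d R_{\mathscr{C}}^{n-1}$ is then harmlessly absorbed into the error.

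The principal technical obstacle is the bookkeeping for the iterated integration by parts: each step generates a tree of boundary and bulk terms of the schematic shape $\phi^{(j_1)} \cdots \phi^{(j_r)}/(\phi')^{k+j_1+\cdots+j_r}$, and Markov's inequality must be invoked just carefully enough that every such term is indeed of size $\ll R_{\mathscr{C}}^{n-1}/\vert m_i\vert^K$, so that the overall tail sum in $\mathbf{m}$ converges.
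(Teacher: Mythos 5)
There is a genuine gap in your treatment of the modes with $\vert\mathbf{m}\vert\ge 2$: the claimed bound $\vert I_{\mathbf{m}}\vert\ll_d R_{\mathscr{C}}^{n-1}/\vert m_i\vert^{K}$ with $K=n+2$ is false, and no amount of bookkeeping in the iterated integration by parts will produce it. The obstruction is not the bulk cascade you worry about but the very first boundary term: one integration by parts in $t_i$ already produces $\bigl[e(\phi)/(2\pi i\phi')\bigr]$ evaluated at the two endpoints, which is of size $\asymp 1/\vert m_i\vert$ since $\vert\phi'\vert\asymp\vert m_i\vert$, and further integrations by parts only improve the remaining bulk integral while leaving this term untouched. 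Concretely, for $\mathbf{m}=(m,0,\dots,0)$ the inner $t_1$-integral equals $\frac{(-1)^{m+1}}{2\pi i m}\bigl(e(\lambda f)\vert_{t_1=b_1+1/2}-e(\lambda f)\vert_{t_1=a_1-1/2}\bigr)+\mathcal{O}(m^{-2})$, and the two endpoint phases differ by a generically non-integral amount, so $\vert I_{\mathbf{m}}\vert\asymp R_{\mathscr{C}}^{n-1}/\vert m\vert$. This reflects the general fact that the Fourier transform of a sharp cutoff decays only like $1/\vert m\vert$ per coordinate regardless of the smoothness of the phase; Markov's inequality controls the higher derivatives of $\lambda\partial_i f$ but is irrelevant to the boundary terms. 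Since $\sum_{m\neq 0}R_{\mathscr{C}}^{n-1}/\vert m\vert$ diverges, the Poisson tail is not absolutely summable and the strategy of bounding each $I_{\mathbf{m}}$ separately cannot close.

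The approach can be repaired only by exploiting cancellation in the sum over $\mathbf{m}$ (the endpoint contributions assemble into conditionally convergent sawtooth Fourier series), and that is precisely the content of the classical one-dimensional van der Corput lemma: if $g'$ is piecewise monotonic with $\mathcal{O}_d(1)$ pieces and $\vert g'\vert\le 1-\psi$, then $\sum_{a\le x\le b}e(g(x))=\int_a^b e(g(t))\,\mathrm{d}t+\mathcal{O}_d(\psi^{-1})$. The proof the paper relies on (carried out in the proof of Lemma 5 of Browning--Dietmann--Elliott) applies this one variable at a time: the sum over $x_i$ is converted into an integral at the cost of $\mathcal{O}_d(\psi^{-1})$ for each of the $\mathcal{O}(R_{\mathscr{C}}^{n-1})$ choices of the remaining coordinates, and iterating over $i=1,\dots,n$ gives exactly the stated error; the hypothesis on $\partial^2 f/\partial X_i^2$ is what controls the number of monotonicity pieces in each variable. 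Two further points would need fixing in any case: the endpoints of $\mathscr{C}$ are not integers in the application (they arise as $(\text{real})/q$ after the substitution $\mathbf{x}=\mathbf{r}+q\mathbf{y}$), so the half-integer enlargement and the exact cancellation it is designed to create are unavailable, and the hypothesis $\vert\lambda\nabla f\vert\le 1-\psi$ is only given on $\mathscr{C}$, not on an enlargement; finally, Markov's inequality on $[a_i,b_i]$ yields $(b_i-a_i)^{1-k}$, not $R_{\mathscr{C}}^{1-k}$, which matters for anisotropic boxes.
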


\medskip

We now wish to estimate the Weyl sum $S(\alpha)$, where $\alpha=\frac{a}{q}+\beta$ for some $q \le P$. Sorting the initial sum by congruence classes modulo $q$, we get
\begin{align*}
    S\left(\frac{a}{q}+\beta\right)
    &=\sum_{\mathbf{x} \in P\mathcal{B} \cap \mathbb{Z}^n} e\left(\left(\frac{a}{q}+\beta\right)\phi(\mathbf{x})\right)\\
    &=\sum_{\mathbf{r}(q)} e\left(\frac{a\phi(\mathbf{r})}{q}\right)\sum_{\mathbf{y} \in \mathbb{Z}^n: \mathbf{r}+q\mathbf{y} \in P\mathcal{B}} e\left(\beta \phi(\mathbf{r}+q\mathbf{y})\right).
\end{align*}
We continue by applying Lemma \ref{lemma_approx_major_arcs} with $f(\mathbf{y})=\phi(\mathbf{r}+q\mathbf{y})$, $\lambda=\beta$ and $\mathscr{C}=\{\mathbf{y}: \mathbf{r}+q\mathbf{y} \in P\mathcal{B}\}$, so that $R_{\mathscr{C}}= \frac{2P}{q}$. For $\vert \beta\vert \le u$, the bound on the derivative will be satisfied with $\psi=\frac{1}{2}$ as soon as
\begin{equation}
\tag{$\mathfrak{M}_2$}
u \cdot P_0 \cdot M \cdot \vert \mathbf{z}\vert^2 \ll P
\end{equation}
with a sufficiently small implicit constant.
This yields
\begin{align*}
    S\left(\frac{a}{q}+\beta\right)
    &=\sum_{\mathbf{r}(q)} e\left(\frac{a\phi(\mathbf{r})}{q}\right)\left(\int_{t: \mathbf{r}+q\mathbf{t} \in P\mathcal{B}} e\left(\beta \phi(\mathbf{r}+q\mathbf{t})\right) dt+\mathcal{O}\left(\frac{P^{n-1}}{q^{n-1}} \right)\right)\\
    &=\frac{S(q,a)}{q^n} \int_{P\mathcal{B}} e(\beta \phi(\mathbf{t})) dt+\mathcal{O}\left(P^{n-1}q\right),
\end{align*}
where we write
\[S(q,a)=\sum_{\mathbf{r}(q)} e\left(\frac{a\phi(\mathbf{r})}{q}\right).\]

Integrating this approximation over $\vert \beta\vert \le \frac{u}{P^3}$ and summing over $a$ and $q$, we obtain
\begin{align*}
    \int_{\mathfrak{M}} S(\alpha) d\alpha&=\mathfrak{S}(P_0) \int_{\vert \beta\vert \le \frac{u}{P^3}} \int_{P\mathcal{B}} e(\beta \phi(\mathbf{t})) d\mathbf{t}+\mathcal{O}\left( P^{n-4}P_0^3u\right)\\
    &=\mathfrak{S}(P_0) \cdot \int_{\vert \beta\vert \le u} \int_{\mathcal{B}} e\left(\beta \frac{\phi(P\mathbf{t})}{P^3}\right) d\mathbf{t} \cdot P^{n-3}+\mathcal{O}\left( P^{n-4}P_0^3u\right)\\
    &=\mathfrak{S}(P_0) \cdot \int_{\vert \beta\vert \le u} \int_{\mathcal{B}} e\left(\beta C(\mathbf{t})+\mathcal{O}\left(\frac{uM\vert \mathbf{z}\vert^2}{P}\right)\right) d\mathbf{t} \cdot P^{n-3}+\mathcal{O}\left( P^{n-4}P_0^3u\right)\\
    &=\mathfrak{S}(P_0) \cdot \left(\mathfrak{I}(u)+\mathcal{O}\left(\frac{u^2M\vert \mathbf{z}\vert^2}{P}\right)\right) \cdot P^{n-3}+\mathcal{O}\left( P^{n-4}P_0^3u\right),
\end{align*}
with
\[\mathfrak{S}(P_0):=\sum_{q \le P_0} \sum_{(a;q)=1} \frac{S(q,a)}{q^n}\]
and
\[\mathfrak{I}(u):=\int_{\vert \beta\vert \le u} \int_{\mathcal{B}} e(\beta C(\mathbf{t})) d\mathbf{t}.\]


\section{The singular integral}

We now need to estimate the singular integral $\mathfrak{I}(u)$. As usual, this is done by first choosing the center of the box $\mathcal{B}$ to be a suitable non-singular point and then using Fourier's Inversion Theorem to show that $\mathfrak{I}(u)$ converges to a positive number $\mathfrak{I}$ as $u \to \infty$.

However, to obtain the desired uniform result, we also require a bound on the difference $\vert \mathfrak{I}-\mathfrak{I}(u)\vert$. In \cite{bde12}, a uniform version of Fourier's Inversion Theorem was cited from the thesis of Lloyd~\cite{lloyd}. Since Lloyd's Thesis is not publically available and there actually was a small mistake in the application of his result, we decided to include a self-contained treatment of the singular integral, closely following the original argument of Lloyd.

The key technical result is the following:

\begin{lemma}
\label{lemma_sing_int_conv}
Suppose that $n \ge 2$ and that $\mathcal{B}$ is a box with center $\mathbf{z}$ and of width $\rho$ and suppose that $C$ satisfies
\[\frac{\partial C}{\partial x_i} \ge \partial_i\]
for $i=1,2$ on all of $\mathcal{B}$ for some positive constants $\partial_1,\partial_2$. Then, for
\[\mathfrak{I}(Z):=\int_{\vert \beta\vert \le Z} \int_{\mathcal{B}} e(\beta C(\mathbf{x})) d\mathbf{x} dt,\]
one has
\[\mathfrak{I}(Z)=V(0) \cdot \left(1+\mathcal{O}\left(\frac{1}{\sigma Z}\right)\right)+\mathcal{O}\left(\frac{\rho^{n-2}}{Z}\left(\frac{1}{\partial_1\partial_2}+\frac{\rho\vert \mathbf{z}\vert M}{\partial_1^3} \log(\sigma Z)\right)\right)\]
whenever $\sigma Z \ge 2$, where $\sigma=\max_{\mathbf{x} \in \mathcal{B}} \vert C(\mathbf{x})\vert$.

We have the bound
\begin{equation}
    \label{eq.boundV(t)}
    V(0) \gg \frac{\rho^{n-1}}{M\vert \mathbf{z}\vert^2}.
\end{equation}
\end{lemma}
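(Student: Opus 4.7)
The starting point is to interchange the $\beta$ and $\mathbf{x}$ integrations in $\mathfrak{I}(Z)$ (justified by boundedness of $\mathcal{B}$), giving
\[
\mathfrak{I}(Z) = \int_{\mathcal{B}} \frac{\sin(2\pi Z C(\mathbf{x}))}{\pi C(\mathbf{x})}\, d\mathbf{x}.
\]
Using $\partial C/\partial x_1 \geq \partial_1 > 0$ on $\mathcal{B}$, I would change variables $t = C(\mathbf{x})$ in the $x_1$-direction (for each fixed $\mathbf{x}' = (x_2,\dots,x_n)$) to recast this as $\mathfrak{I}(Z) = \int V(t)\,\sin(2\pi Zt)/(\pi t)\, dt$, where $V(t)$ is obtained by integrating $1/|\partial C/\partial x_1|$ over those $\mathbf{x}'$ for which the unique $x_1$-solution to $C(x_1,\mathbf{x}') = t$ lies in $\mathcal{B}$. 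Splitting as $\mathfrak{I}(Z) = V(0)\int \sin(2\pi Zt)/(\pi t)\,dt + \int (V(t)-V(0))\sin(2\pi Zt)/(\pi t)\,dt$, the first integral evaluates, via the classical asymptotic $\pi/2 - \mathrm{Si}(x) = \cos(x)/x + \mathcal{O}(1/x^2)$ applied at the endpoints of the range of $C$ on $\mathcal{B}$, to $1 + \mathcal{O}(1/(\sigma Z))$, yielding the main term $V(0)(1 + \mathcal{O}(1/(\sigma Z)))$.

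For the remainder I would decompose $V'(t) = V'_{\mathrm{int}}(t) + V'_{\mathrm{bdy}}(t)$, where $V'_{\mathrm{int}}$ comes from the $t$-variation of the smooth integrand $1/(\partial C/\partial x_1)$ and is pointwise bounded by $\rho^{n-1} M|\mathbf{z}|/\partial_1^3$ (using $|\partial^2 C/\partial x_1^2| \ll M|\mathbf{z}|$), while $V'_{\mathrm{bdy}}$ arises from the motion of the boundary $\{C_\pm(\mathbf{x}') = t\}$ of the integration region in $\mathbf{x}'$-space, parameterized via $\partial C/\partial x_2 \geq \partial_2$, and is bounded by $\rho^{n-2}/(\partial_1\partial_2)$. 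For the interior contribution, I would split $|t| \lessgtr 1/Z$: near zero the trivial bound $|\sin(2\pi Zt)/t| \leq 2\pi Z$ combined with $|V_{\mathrm{int}}(t)-V_{\mathrm{int}}(0)| \leq \|V'_{\mathrm{int}}\|_{\infty}|t|$ gives $\mathcal{O}(\|V'_{\mathrm{int}}\|/Z)$; outside, integration by parts in $t$ yields $\mathcal{O}(\|V'_{\mathrm{int}}\|\log(\sigma Z)/Z)$, the logarithm arising from $\int dt/|t|$. Together this produces the bound $\rho^{n-1} M|\mathbf{z}|\log(\sigma Z)/(Z\partial_1^3)$, matching the second error term in the target.

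The delicate point is the boundary contribution, where a naïve analogue of the above produces a superfluous $\log(\sigma Z)$ factor. To avoid it, I would use the Fourier-dual representation $\mathfrak{I}(Z) - V(0) = -\int_{|\beta|>Z} I(\beta)\, d\beta$ with $I(\beta) = \int_{\mathcal{B}} e(\beta C(\mathbf{x}))\, d\mathbf{x}$, and integrate by parts \emph{twice}: first in $x_1$ (splitting $I(\beta)$ into a boundary piece supported on $x_1 = z_1 \pm \rho/2$ and an interior piece) and then, on the boundary piece, once more in $x_2$ using $\partial C/\partial x_2 \geq \partial_2$. Two integrations by parts produce two powers of $1/\beta$ in the boundary contribution, so that the tail integral $\int_{|\beta|>Z}(\cdot)/\beta^2\, d\beta \asymp 1/Z$ is controlled directly without any logarithm, producing the target bound $\rho^{n-2}/(Z\partial_1\partial_2)$. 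The boundary-of-boundary terms generated by this double integration by parts are handled by the identity $|\sin(2\pi Zt)/t| \leq 2\pi Z$, which absorbs the apparent singularities near $t = 0$.

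Finally, the lower bound $V(0) \gg \rho^{n-1}/(M|\mathbf{z}|^2)$ follows at once from the pointwise estimate $|\partial C/\partial x_1(\mathbf{x})| = |\sum_{j,k} c_{1jk}x_jx_k| \ll M|\mathbf{z}|^2$ on $\mathcal{B}$ (using $|\mathbf{x}| \ll |\mathbf{z}|$), which gives $1/(\partial C/\partial x_1) \gg 1/(M|\mathbf{z}|^2)$ uniformly, together with the fact that the set of $\mathbf{x}'$ for which $0 \in [C_-(\mathbf{x}'), C_+(\mathbf{x}')]$ has $(n-1)$-dimensional measure $\gg \rho^{n-1}$ in the intended application where $\mathbf{z}$ lies on $C = 0$. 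The main technical obstacle is precisely the elimination of the logarithmic factor from the boundary contribution, which is what forces both derivative hypotheses $\partial_1, \partial_2$ to enter in an essential way through the double integration by parts.
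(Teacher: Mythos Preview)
Your overall architecture---interchange integrals, change variables $x_1 \mapsto t = C(\mathbf{x})$ using $\partial C/\partial x_1 \ge \partial_1$, write $\mathfrak{I}(Z) = \int V(t)\frac{\sin 2\pi Zt}{\pi t}\,dt$, split off $V(0)$, and control the remainder via a bound on $V'$---is exactly what the paper does. The lower bound on $V(0)$ is also handled the same way.

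The divergence comes where you separate $V' = V'_{\mathrm{int}} + V'_{\mathrm{bdy}}$ and then invoke a Fourier-dual double integration by parts to strip the logarithm from the boundary piece. The paper does \emph{not} do this. It simply bounds the full derivative
\[
\frac{\partial^{\pm} V}{\partial t} \ll \rho^{n-2}\left(\frac{1}{\partial_1\partial_2} + \frac{\rho M|\mathbf{z}|}{\partial_1^3}\right) =: A
\]
(via Leibniz's rule, exactly your two contributions), introduces the symmetrised quotient $\Phi(t) = (V(t)+V(-t)-2V(0))/t$ with $\Phi \ll A$ and $\Phi' \ll A/t$, splits $\int_0^\sigma \Phi(t)\sin 2\pi Zt\,dt$ at $\tau = 1/Z$, and integrates by parts once on $[\tau,\sigma]$. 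This yields $\mathcal{O}(A Z^{-1}\log(\sigma Z))$, i.e.\ a logarithm multiplying the \emph{entire} $A$, which is slightly weaker than the statement of the lemma but entirely sufficient for the application (the log is absorbed into $M^\varepsilon$).

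Your Fourier-side manoeuvre is therefore unnecessary, and as sketched it is also incomplete: after integrating by parts in $x_1$ and then in $x_2$ on the resulting boundary integral, you obtain not only the codimension-two corner term of size $\rho^{n-2}/(\beta^2\partial_1\partial_2)$ but also an interior-in-$x_2$ term on the $x_1$-face, carrying a factor $\partial_{x_2}\bigl((\partial_{x_1}C\cdot\partial_{x_2}C)^{-1}\bigr)$ and hence contributions of order $M|\mathbf{z}|/(\partial_1\partial_2^2)$ and $M|\mathbf{z}|/(\partial_1^2\partial_2)$ that are not present in the target bound. You also need to justify the identity $\mathfrak{I}(Z) - V(0) = -\int_{|\beta|>Z} I(\beta)\,d\beta$ in the first place, which already requires enough decay of $I(\beta)$. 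The paper's route avoids all of this by not attempting to separate the two contributions.
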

Some remarks are in order: Compared to Lemma 7 in \cite{bde12}, our last error term is better. Indeed, as we will explain, it is the precise outcome of Lloyd's argument. It has the additional virtue of being scaling invariant: If we replace $\mathcal{B}$ by $T\mathcal{B}$ and $Z$ by $\frac{Z}{T^3}$, then $\mathfrak{I}(Z)$ is multiplied by $T^{n-3}$ and the same is true for all error terms (in contrast to the version in \cite{bde12}).

Note that we have also refrained from writing the first term as $V(0)+\mathcal{O}\left(\frac{V(0)}{\sigma Z}\right)$. This is convenient because it means that we do not require an upper bound for $V(0)$ in order to deduce a lower bound for $\mathfrak{I}(Z)$.

Finally, note that the condition $\frac{\partial C}{\partial x_i} \ge \partial_i$ on all of $\mathcal{B}$ leads to a restriction on the size of $\rho$ that was overlooked (and indeed not satisfied by the choice made) in \cite{bde12}.

\begin{proof}
We begin by interchanging the order of integration to write
\[\mathfrak{I}(Z)=\int_{\mathcal{B}} \frac{\sin 2\pi ZC(\mathbf{x})}{\pi C(\mathbf{x})} d\mathbf{x}.\]
The next step is to make the change of variables $(x_1,\dots,x_n) \mapsto (t,x_2,\dots,x_n)$ with $t=C(\mathbf{x})$. The Jacobian of this change of variable is given by $\frac{\partial C}{\partial x_1} \ge \partial_1>0$ and so this change of variables is invertible. If we denote by $g(t,x_2,\dots,x_n)$ the coordinate $x_1$ of the inverse, then $0<\frac{\partial g}{\partial t}=\frac{1}{\frac{\partial C}{\partial x_1}} \le \frac{1}{\partial_1}$ and we can write
\[\mathfrak{I}(Z)=\int_{\mathcal{R}} \frac{\sin 2\pi Zt}{\pi t} \frac{\partial g}{\partial t}(t,x_2,\dots,x_n) dt dx_2\dots dx_n=\int_{-\sigma}^{\sigma} \frac{\sin 2\pi Zt}{\pi t} V(t) dt\]
where
\begin{equation}
\label{eq.V(t)}
    V(t)=\int_{(t,x_2,\dots,x_n) \in \mathcal{R}} \frac{\partial g}{\partial t}(t,x_2,\dots,x_n) dx_2\dots dx_n
\end{equation}
and $\mathcal{R}$ is the image of $\mathcal{B}$ under the change of variables.

For later use, we also record the lower bound
\begin{equation}
    \label{eq.bound_g1}
    \frac{\partial g}{\partial t}=\frac{1}{\frac{\partial C}{\partial x_1}} \gg \frac{1}{M\vert \mathbf{z}\vert^2}.
\end{equation}

To make use of Fourier's Inversion Theorem, we need to estimate right and left derivatives of the function $V(t)$.

To this end, we write
\[V(t)=\int_{\prod_{i=3}^n [z_i-\rho,z_i+\rho]} \int_{x_2 \in \mathcal{R}_{t,x_3,\dots,x_n}} \frac{\partial g}{\partial t}(t,x_2,\dots,x_n) dx_2 dx_3\dots dx_n\]
where
\begin{align*} 
\mathcal{R}_{t,x_3,\dots,x_n}&=\{x_2 \in (z_2-\rho,z_2+\rho): \exists x_1 \in (z_1-\rho,z_1+\rho): t=C(\mathbf{x})\}\\
&= \{x_2 \in (z_2-\rho, z_2+\rho): C(z_1-\rho,x_2,\dots,x_n)<t<C(z_1+\rho,x_2,\dots,x_n)\}\\
&=\{x_2 \in (z_2-\rho, z_2+\rho): b^{(1)}_{x_3,\dots,x_n}(x_2)<t<b^{(2)}_{x_3,\dots,x_n}(x_2)\}
\end{align*}
with $b^{(1)}$ and $b^{(2)}$  defined by the last equation so that we have $\frac{\partial b^{(i)}}{\partial x_2} \ge \partial_2$ by assumption. In particular, we can write
\[\mathcal{R}_{t,x_3,\dots,x_n}=(\ell^{(1)}_{x_3,\dots,x_n},\ell^{(2)}_{x_3,\dots,x_n})\]
with $\ell^{(i)}_{x_3,\dots,x_n}$ continuous everywhere and continuously differentiable with the exception of at most two points. At these two points, left and right derivatives exist and all of these derivatives satisfy $0<\frac{\partial \ell^{(i)}}{\partial t} \le \frac{1}{\partial_2}$. We now obtain
\[V(t)=\int_{\prod_{i=3}^n [z_i-\rho,z_i+\rho]} \int_{\ell^{(1)}}^{\ell^{(2)}} \frac{\partial g}{\partial t}(t,x_2,\dots,x_n) dx_2 dx_3\dots dx_n.\]
Using Leibniz's rule, we now obtain that $V$ has right and left derivatives everywhere with them disagreeing only at finitely many points. More precisely, the left and right derivatives of the inner integral are given by a linear combination of expressions of the form $\frac{\partial g}{\partial t} \cdot \frac{\partial^{\pm} \ell^{(i)}}{\partial t}$ as well as
\[\int_{\ell^{(1)}}^{\ell^{(2)}} \frac{\partial^2 g}{\partial t^2} dt.\]
The first type of expressions is bounded by $\mathcal{O}\left(\frac{1}{\partial_1\partial_2}\right)$ and the second one by $\frac{\rho M \vert \mathbf{z}\vert}{\partial_1^3}$ on noting that
\[\frac{\partial^2 g}{\partial t^2}=-\frac{\frac{\partial^2 C}{\partial x_1}}{\left(\frac{\partial C}{\partial x_1}\right)^3} \ll \frac{M \vert \mathbf{z}\vert}{\partial_1^3}\]
(where we used that $\rho \ll \vert \mathbf{z}\vert$ and hence $\vert \mathbf{z}\vert+\rho \ll \vert \mathbf{z}\vert$ since the box clearly can't contain the origin). It now follows that
\[\frac{\partial^{\pm} V}{\partial t} \ll \rho^{n-2} \cdot \left(\frac{1}{\partial_1\partial_2}+\frac{\rho M\vert \mathbf{z}\vert}{\partial_1^3}\right)=:A.\]
Letting 
\[\Phi(t)=\frac{V(t)+V(-t)-2V(0)}{t},\]
we now see that $\Phi(t) \ll A$ and $\Phi'(t) \ll \frac{A}{t}$ for all $t>0$.

Finally, we are ready to estimate
\[\mathfrak{I}(Z)=2V(0) \int_0^{\sigma} \frac{\sin 2\pi Zt}{\pi t} dt+\frac{1}{\pi} \int_0^{\sigma} \Phi(t) \sin 2\pi Zt dt.\]
The first integral is easily evaluated to $\frac{1}{2}+\mathcal{O}\left(\frac{1}{\sigma Z}\right)$. For the second integral, we split the range of integration into $t \le \tau$ and $t \ge \tau$ for a suitable parameter $0 \le \tau \le \sigma$. The range $t \le \tau$ contributes $\ll \tau A$. On the range $t \ge \tau$ we can integrate by parts to obtain
\[\int_{\tau}^{\sigma} \Phi(t) \sin 2\pi Zt dt=\left[-\Phi(t) \frac{\cos 2\pi Zt}{2\pi t}\right]_\tau^{\sigma}+\int_{\tau}^{\sigma} \Phi'(t) \frac{\cos 2\pi Zt}{2\pi t} dt \ll \frac{A}{Z}\left(1+\log \frac{\sigma}{\tau}\right).\]
The main result of the lemma now follows upon choosing $\tau=\frac{1}{Z}$.

Finally, we note that \eqref{eq.boundV(t)} follows immediately from \eqref{eq.V(t)} and \eqref{eq.bound_g1}.
\end{proof}

To apply Lemma \ref{lemma_sing_int_conv}, we now need to choose a suitable non-singular point $\mathbf{z}$ as the center of our box. The strategy is similar to the one in the proof of Lemma 6 in \cite{bde12}, but we need a variant for the inhomogeneous case.

\begin{lemma}
\label{lemma_good_nonsingular_point}
a) If $h=h(C)$, there is a solution $\widetilde{\mathbf{z}}=(\xi,\mathbf{y}) \in \mathbb{R}^n$ to $C(\mathbf{z})=0$ satisfying $\vert \mathbf{z}\vert \ll M^{\frac{1}{h-2}}$ and such that (possibly after relabeling the coordinates and changing signs)
\[\frac{\partial C(\widetilde{\mathbf{z}})}{\partial x_1} \gg M^{-1-\frac{4}{h-2}}\]
and
\[\frac{\partial C(\widetilde{\mathbf{z}})}{\partial x_2} \gg M^{-2-\frac{7}{h-2}}.\]
b) Similarly, unless $C(\mathbf{x})=0$ has a non-trivial integer solution with $\vert \mathbf{x}\vert \ll M^{\frac{1}{n-2}}$, there is a solution $\widetilde{\mathbf{z}}=(\xi,\mathbf{y}) \in \mathbb{R}^n$ to $C(\widetilde{\mathbf{z}})=0$ satisfying $\vert \mathbf{z}\vert \ll M^{\frac{1}{n-2}}$ and such that (possibly after relabeling the coordinates and changing signs)
\[\frac{\partial C(\widetilde{\mathbf{z}})}{\partial x_1} \gg M^{-1-\frac{4}{n-2}}\]
and
\[\frac{\partial C(\widetilde{\mathbf{z}})}{\partial x_2} \gg M^{-2-\frac{7}{n-2}}.\]
\end{lemma}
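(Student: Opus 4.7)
The plan is to first produce a real zero of $C$ of controlled height using the structural hypothesis, and then to arrange by a perturbation that two of its partial derivatives satisfy the required lower bounds. I would outline the strategy for part (a); part (b) should follow by the analogous argument, with the vanishing subspace $V$ replaced by the hypothesis on small integer solutions, the contrapositive of which furnishes a real zero of the desired size directly.

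For part (a), the decomposition $C=\sum_{i=1}^h L_iQ_i$ associated to the $h$-invariant yields an $(n-h)$-dimensional linear subspace $V$ on which $C$ vanishes identically. This subspace is defined over $\mathbb{Q}$, and integer points of $V$ of moderate height are abundant. I would pick $\mathbf{v}\in V\cap\mathbb{Z}^n$ of height $H$ (to be optimised) and consider the univariate cubic $p(t):=C(\mathbf{v}+t\mathbf{e}_1)$, whose leading coefficient $c_{111}$ is $\gg M$ by the standing normalisation, and whose lower-order coefficients are bounded by $M|\mathbf{v}|^k$ for $k=1,2,3$. Cauchy's root bound produces a real root $t_0$ with $|t_0|\ll|\mathbf{v}|$, yielding a real zero $\widetilde{\mathbf{z}}=\mathbf{v}+t_0\mathbf{e}_1$ of $C$ with $|\widetilde{\mathbf{z}}|\ll H$. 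Balancing $H$ against the requirements of the gradient step gives the desired bound $|\widetilde{\mathbf{z}}|\ll M^{1/(h-2)}$, with the exponent $1/(h-2)$ arising from the competition between the size of the coefficients of $p$ and the size of $\mathbf{v}$.

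For the lower bound $\partial_1 C(\widetilde{\mathbf{z}})=p'(t_0)\gg M^{-1-4/(h-2)}$, I would ensure (by a generic perturbation of $\mathbf{v}$ if necessary) that $t_0$ is a simple root of $p$ and invoke a Mahler-type root-separation inequality: since $p\in\mathbb{Z}[t]$ has discriminant a nonzero integer bounded polynomially in $M$ and $H$, the pairwise differences of its roots are lower-bounded by an expression of the form $M^{-a}H^{-b}$, which translates into the required bound on $|p'(t_0)|$ upon inserting $H\ll M^{1/(h-2)}$. For the second partial derivative, I would introduce a further perturbation by considering $\mathbf{v}+s\mathbf{e}_2+t\mathbf{e}_1$, adjusting $s$ so as to make $\partial_2 C$ large while solving $C=0$ in $t$ via an implicit-function-type argument; this secondary step is expected to cost an extra factor of $|\mathbf{v}|$, yielding the worse exponent $-2-7/(h-2)$. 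After possibly relabeling coordinates and changing signs, the claim follows.

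The main obstacle will be the balancing of these exponents in the last paragraph: the precise values $-1-4/(h-2)$ and $-2-7/(h-2)$ emerge from optimising carefully between the height of $\mathbf{v}$, the Mahler separation estimate, and the additional overhead of the two-parameter perturbation needed to control $\partial_2 C$. I expect most of the actual proof to consist of this bookkeeping, together with a quantitative justification of the Minkowski-type step that produces $\mathbf{v}$ of the required height inside $V\cap\mathbb{Z}^n$.
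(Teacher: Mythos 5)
Your approach diverges from the paper's and has a genuine gap at its foundation. You propose to take an integer point $\mathbf{v}$ of height $\ll M^{1/(h-2)}$ on the $(n-h)$-dimensional subspace $V$ on which $C$ vanishes. But the linear forms $L_i$ in the decomposition $C=\sum L_iQ_i$ carry no a priori height bound in terms of $M$, so the lattice $V\cap\mathbb{Z}^n$ may have enormous determinant and its shortest vectors need not be polynomially bounded in $M$; the ``Minkowski-type step'' you defer to cannot be carried out, and the exponent $\frac{1}{h-2}$ has no natural source in a rank-$(n-h)$ lattice of uncontrolled covolume. A second structural problem: since $\mathbf{v}\in V$ forces $C(\mathbf{v})=0$, your polynomial $p(t)=C(\mathbf{v}+t\mathbf{e}_1)$ already has $t=0$ as a root, and the real content of the lemma is precisely that the gradient at the chosen zero is \emph{large} --- which $p'(0)$ need not be. Your fix via Mahler root separation requires the discriminant of $p$ to be a nonzero integer, which is not guaranteed, and the ``generic perturbation'' that would make it so destroys the property $\mathbf{v}\in V$ on which the construction rests. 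The treatment of $\partial C/\partial x_2$ by a two-parameter implicit-function argument is likewise not substantiated.

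The paper proceeds quite differently and avoids all of this. Write $C=ax_1^3+F_1x_1^2+F_2x_1+F_3$ with $a\gg M$ and $F_i$ forms in $x_2,\dots,x_n$ of degree $i$. Apply Siegel's lemma to the single linear form $F_1$ (whose coefficients \emph{are} bounded by $M$) to obtain $n-h+1$ linearly independent integer zeros of size $\ll M^{1/(h-2)}$; the definition of the $h$-invariant guarantees that $F_3(\mathbf{y})\neq 0$, hence $|F_3(\mathbf{y})|\ge 1$ by integrality, for at least one of them. The resulting equation in $\xi$ is the \emph{depressed} cubic $a\xi^3+F_2\xi+F_3=0$ with $F_3\le -1$ after a sign flip, which has a positive real root with explicit two-sided bounds; the derivative bound then falls out of the algebraic identity $3a\xi^2+F_2=2a\xi^2-F_3/\xi\ge 2a\xi^2$ with no root-separation input, and the bound on $\partial C/\partial x_2$ follows from Euler's identity. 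If you want to salvage your write-up, replace the subspace $V$ by the zero set of $F_1$ and the Mahler estimate by this positivity argument.
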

\begin{proof}
We write the cubic form as
\[C(\mathbf{x})=ax_1^3+F_1x_1^2+F_2x_1+F_3\]
where as explained in the introduction we may assume that $a>0$ and $a \gg M$. In the setting of b), by Siegel's Lemma, we can find a non-trivial integer solution $\mathbf{y}$ to $F_1(\mathbf{y})=0$ with $\vert \mathbf{y}\vert \ll M^{\frac{1}{n-2}}$. If $F_3(\mathbf{y})=0$, we have found the desired small integer solution $(0,\mathbf{y})$. Otherwise we may assume that $\vert F_3(\mathbf{y})\vert \ge 1$.

In the setting of a), we argue instead that we can find linearly independent integer solutions $\mathbf{y}^{(1)}, \dots, \mathbf{y}^{(n-h+1)}$ of $F_1(\mathbf{y})=0$, all of them satisfying $\mathbf{y}^{(i)} \ll M^{\frac{1}{h-2}}$. By definition of $h=h(C)$, one of them must have $F_3(\mathbf{y}) \ne 0$ and hence $\vert F_3(\mathbf{y})\vert \ge 1$.

From here on, the argument is identical in both cases, so we only treat a).

Flipping signs if necessary, we may assume that $F_3=F_3(\mathbf{y}) \le -1$. We may thus find a real zero $\xi>0$ of $C(\xi,\mathbf{y})=a\xi^3+F_2\xi+F_3=0$ where we have written $F_2=F_2(\mathbf{y})$. The next step is to establish bounds on $\xi$.

If $F_2 \ge 0$, we can use that $a\xi^3 \le \vert F_3\vert \ll M^{1+\frac{3}{h-2}}$, so that $\xi \ll M^{\frac{1}{h-2}}$ and then
\[\xi \ge \frac{1}{a\xi^2+F_2} \gg M^{-1-\frac{2}{h-2}}.\]
If $F_2<0$, we instead argue that $a\xi^3 \ge 1$, so that $\xi \gg M^{-1/3}$,  $a\xi^3 = \vert F_2\vert \xi+\vert F_3\vert$ and
\[\xi \ll  \left \vert \frac{F_2}{a}\right\vert^{1/2}+\left\vert \frac{F_3}{a}\right\vert^{1/3} \ll M^{\frac{1}{h-2}}.\]
In any case, we have thus established that
\[M^{-1-\frac{2}{h-2}} \ll \xi \ll M^{\frac{1}{h-2}}.\]
Finally, we need to bound the partial derivatives. We have
\[\frac{\partial C(\xi,\mathbf{y})}{\partial x_1}=3a\xi^2+F_2=2a\xi^2-\frac{F_3}{\xi} \ge 2a\xi^2 \gg M^{-1-\frac{4}{h-2}}.\]
Using Euler's identity, we now find that
\[\left\vert y_2 \frac{\partial C}{\partial x_2}+\dots+y_n \frac{\partial C}{\partial x_n}\right\vert \ge \left\vert \xi \frac{\partial C}{\partial x_1}\right\vert -3\vert C(\xi,\mathbf{y})\vert \gg M^{-2-\frac{6}{h-2}}\]
and hence w.l.o.g. $\left\vert y_2 \frac{\partial C}{\partial x_2}\right\vert \gg M^{-2-\frac{6}{h-2}}$, so that $\left\vert \frac{\partial C}{\partial x_2}\right\vert \gg M^{-2-\frac{7}{h-2}}$ as desired.
\end{proof}
We now choose our box $\mathcal{B}$ with center $\mathbf{z}$ and width $\rho=1$ making sure that the assumptions in Lemma \ref{lemma_sing_int_conv} are satisfied. To this end, we choose $\mathbf{z}=AM^{3+\frac{8}{n-2}} \widetilde{\mathbf{z}}$ with $\widetilde{\mathbf{z}}$ as in Lemma \ref{lemma_good_nonsingular_point} and a sufficiently large constant $A>0$. With the choice of $h=14$ or $n=14$, respectively, we record the properties of this choice in the following lemma.

\begin{lemma}
\label{lemma_center_B}
The point $\mathbf{z} \in \mathbb{R}^n$ is a solution of $C(\mathbf{z})=0$ satisfying $\vert \mathbf{z}\vert \ll M^{3.75}$,
\[\frac{\partial C}{\partial x_1} \gg M^{6}\]
and
\[\frac{\partial C}{\partial x_2} \gg M^{4.75}\]
on all of $\mathcal{B}$.
\end{lemma}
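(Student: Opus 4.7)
The plan is to verify the three assertions by direct computation, using the homogeneity of $C$ and its partial derivatives. First, $C(\mathbf{z}) = 0$ is immediate from $C(\widetilde{\mathbf{z}}) = 0$ and the degree-three homogeneity of $C$. For the size bound, Lemma \ref{lemma_good_nonsingular_point} supplies $|\widetilde{\mathbf{z}}| \ll M^{1/(h-2)} = M^{1/12}$ (and the same in the case $n=14$), so with scaling exponent $3 + \frac{8}{n-2} = \frac{11}{3}$ one obtains $|\mathbf{z}| \ll A\,M^{11/3+1/12} = A\,M^{3.75}$.

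Next, since each partial derivative $\partial C/\partial x_i$ is homogeneous of degree $2$, the lower bounds at the scaled point $\mathbf{z}$ follow by combining the estimates of Lemma \ref{lemma_good_nonsingular_point} with the overall factor $(AM^{11/3})^2 = A^2 M^{22/3}$:
\[
\frac{\partial C(\mathbf{z})}{\partial x_1} \gg A^2 M^{22/3 - 4/3} = A^2 M^6, \qquad \frac{\partial C(\mathbf{z})}{\partial x_2} \gg A^2 M^{22/3 - 2 - 7/12} = A^2 M^{4.75}.
\]

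The substantive step is propagating these bounds from the single point $\mathbf{z}$ to all of $\mathcal{B}$. Writing $\mathbf{x} = \mathbf{z} + \mathbf{w}$ with $|\mathbf{w}| \le 1$ and expanding the quadratic form $\partial C/\partial x_i$, whose coefficients are $\ll M$, one obtains
\[
\frac{\partial C(\mathbf{x})}{\partial x_i} - \frac{\partial C(\mathbf{z})}{\partial x_i} \ll M|\mathbf{z}|\,|\mathbf{w}| + M|\mathbf{w}|^2 \ll A\,M^{4.75}.
\]
The crucial point is that this perturbation is \emph{linear} in the parameter $A$, whereas the main terms at $\mathbf{z}$ are \emph{quadratic} in $A$. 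Taking the absolute constant $A$ sufficiently large therefore makes the main term dominate by a factor of $A$, uniformly on $\mathcal{B}$, which yields the claimed lower bounds. The bookkeeping --- verifying that the exponent $3 + \frac{8}{n-2}$ has been calibrated so that the main term and the perturbation carry the same power of $M$ (with only a beneficial mismatch in $A$) --- is really the only obstacle. As a by-product, the lower bound $\xi \gg M^{-7/6}$ implicit in the proof of Lemma \ref{lemma_good_nonsingular_point} gives $|\mathbf{z}| \gg A\,M^{11/3 - 7/6} = A\,M^{5/2} \ge 2$, so that the box $\mathcal{B}$ does not contain the origin, as required by the setup.
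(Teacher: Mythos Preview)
Your proof is correct and follows essentially the same approach as the paper: obtain the derivative bounds at $\mathbf{z}$ by scaling from $\widetilde{\mathbf{z}}$ (using degree-$2$ homogeneity of $\partial C/\partial x_i$), then propagate to all of $\mathcal{B}$ via the perturbation estimate $\frac{\partial C}{\partial x_i}(\mathbf{x}) = \frac{\partial C}{\partial x_i}(\mathbf{z}) + \mathcal{O}(M|\mathbf{z}|)$ and absorb the error by taking $A$ sufficiently large. Your write-up is simply more explicit about the arithmetic and about the linear-versus-quadratic dependence on $A$; the additional remark that $\mathcal{B}$ avoids the origin is a useful sanity check (and could be argued even more simply from $|\mathbf{y}|\ge 1$, since $\mathbf{y}$ is a nonzero integer vector).
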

For the proof, we only need to note that the bounds for the derivatives at the point $\mathbf{z}$ (which are obtained directly from the bounds for $\widetilde{\mathbf{z}}$ by scaling) extend over all of $\mathcal{B}$ as
\[\frac{\partial C}{\partial x_i}=\frac{\partial C(\mathbf{z})}{\partial x_i}+\mathcal{O}(M\vert \mathbf{z}\vert).\]
It is here that we make use of the fact that $A$ is sufficiently large.

It is clear that we may assume that $\frac{\partial C}{\partial x_i} \ll \frac{\partial C}{\partial x_1}$ for all $i$ at $\mathbf{z}$ and then on all of $\mathcal{B}$ as otherwise we can simply permute the variables.

Finally, we can collect the results of this section in the following lemma.

\begin{lemma}
\label{lem.majorarcsummary_vorsingulaerereihe}
With the box $\mathcal{B}=\mathcal{B}(\mathbf{z})$ chosen as above, we have
\[\mathfrak{I}(u)=\mathfrak{I} \cdot \left(1+\mathcal{O}\left(\frac{1}{M^{12.25} u}\right)\right)+\mathcal{O}\left(\frac{1}{uM^{10.75}}\right)\]
for some number $\mathfrak{I}>0$ with
\[\mathfrak{I} \gg \frac{1}{M^{8.5}}.\]
In particular, we have
\[\mathfrak{I}(u) \gg \frac{1}{M^{8.5}}\]
for any  $u \ge 1$.

Under the assumption of $(\mathfrak{M}_1)$ and $(\mathfrak{M}_2)$ as well as
\begin{equation}
    \tag{$\mathfrak{I}_1$}
    u^2M^{17+\varepsilon} \ll P,
\end{equation}
it follows that
\[\int_{\mathfrak{M}} S(\alpha) d\alpha =(1+o(1))\mathfrak{S}(P_0) \cdot \mathfrak{I}(u) \cdot P^{n-3}+\mathcal{O}\left( P^{n-4}P_0^3u\right).\]
\end{lemma}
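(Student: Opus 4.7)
The plan is to substitute the parameters of Lemma~\ref{lemma_center_B} into Lemma~\ref{lemma_sing_int_conv} and then splice the resulting approximation of $\mathfrak{I}(u)$ into the major arc expansion already derived earlier in this section. With $\rho=1$, $\partial_1 \gg M^6$, $\partial_2 \gg M^{4.75}$ and $|\mathbf{z}| \ll M^{3.75}$, one directly gets $\partial_1\partial_2 \gg M^{10.75}$ and $\frac{\rho|\mathbf{z}|M}{\partial_1^3} \ll M^{4.75-18} = M^{-13.25}$, so that even after multiplying by $\log(\sigma u) \ll \log M + \log u$ the second contribution to the additive error in Lemma~\ref{lemma_sing_int_conv} is dominated by $\frac{1}{\partial_1\partial_2}$. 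The additive error is therefore $\mathcal{O}(1/(u M^{10.75}))$, and matching exponents in the multiplicative error produces the stated relative error $\mathcal{O}(1/(M^{12.25} u))$. Setting $\mathfrak{I} := V(0) = \lim_{u \to \infty} \mathfrak{I}(u)$, which exists since both error terms vanish as $u \to \infty$, yields the claimed decomposition.

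The lower bound $\mathfrak{I} \gg M^{-8.5}$ is immediate from $V(0) \gg \rho^{n-1}/(M|\mathbf{z}|^2)$ recorded at the end of Lemma~\ref{lemma_sing_int_conv}, plugging in $\rho=1$ and $|\mathbf{z}| \ll M^{3.75}$. To pass from this to the pointwise bound $\mathfrak{I}(u) \gg M^{-8.5}$ valid for every $u \ge 1$, I would observe that both error terms in the just-proved decomposition are smaller than $\mathfrak{I}$ by a positive power of $M$; since we may assume $M$ sufficiently large throughout, they can collectively absorb no more than (say) half of $\mathfrak{I}$, and hence $\mathfrak{I}(u) \ge \mathfrak{I}/2 \gg M^{-8.5}$.

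For the final integral identity I would return to the major-arc expansion derived earlier in this section, namely
\[\int_{\mathfrak{M}} S(\alpha)\, d\alpha = \mathfrak{S}(P_0) \cdot \Bigl(\mathfrak{I}(u) + \mathcal{O}\Bigl(\tfrac{u^2 M |\mathbf{z}|^2}{P}\Bigr)\Bigr) P^{n-3} + \mathcal{O}(P^{n-4} P_0^3 u),\]
which is valid exactly under $(\mathfrak{M}_1)$ and $(\mathfrak{M}_2)$. Using $|\mathbf{z}|^2 \ll M^{7.5}$ the inner error becomes $\mathcal{O}(u^2 M^{8.5}/P)$; dividing by the lower bound $\mathfrak{I}(u) \gg M^{-8.5}$ produces a relative error of order $u^2 M^{17}/P$, which the hypothesis $(\mathfrak{I}_1)$ was designed exactly to force to $o(1)$. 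This lets us absorb the inner error into a $(1+o(1))$ factor on $\mathfrak{I}(u)$ and conclude. The argument is essentially bookkeeping; the main obstacle is to keep every power of $M$ accurate, since any slip here degrades directly the exponents that eventually appear in Theorems~\ref{thm.smallsolution_h=14} and~\ref{thm.smallsolution_hom}.
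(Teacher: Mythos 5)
Your assembly is exactly the paper's intended argument (the paper states this lemma without a separate proof: it is the combination of Lemma~\ref{lemma_sing_int_conv}, Lemma~\ref{lemma_center_B} and the major-arc expansion from Section~3), and your treatment of the additive error, of the lower bounds for $\mathfrak{I}$ and $\mathfrak{I}(u)$, and of the final integral identity via $(\mathfrak{I}_1)$ is correct. The one step you do not actually carry out is the multiplicative error, and it is precisely the step that does not go through as stated. Lemma~\ref{lemma_sing_int_conv} gives a relative error $\mathcal{O}(1/(\sigma u))$ with $\sigma=\max_{\mathbf{x}\in\mathcal{B}}|C(\mathbf{x})|$, so to obtain $\mathcal{O}(1/(M^{12.25}u))$ one needs the \emph{lower} bound $\sigma\gg M^{12.25}$. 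What is actually guaranteed is $\sigma\gg \partial_1\rho\gg M^{6}$ (since $C$ vanishes at the centre and $\partial C/\partial x_1\gg M^{6}$ throughout $\mathcal{B}$), while on the other hand $\sigma\ll M|\mathbf{z}|^{2}\ll M^{8.5}$ because the box has width $1$; the figure $12.25=1+3\cdot 3.75$ is the upper bound $M|\mathbf{z}|^{3}$ for $\sigma$, which points in the wrong direction. So ``matching exponents'' cannot produce $M^{12.25}$; the honest output of the argument is a relative error $\mathcal{O}(1/(M^{6}u))$. This is harmless for everything downstream --- only $\mathfrak{I}(u)\gg M^{-8.5}$ and the fact that the relative error is $o(1)$ are ever used, and both survive --- but in your write-up you should either record the weaker (correct) exponent or supply a genuine lower bound for $\sigma$ rather than asserting the stated one.
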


To discuss the singular series $\mathfrak{S}(P_0)$, we need estimates on the Gauß Sums $S(q,a)$, a problem which is related to bounding the Weyl Sum $S(\alpha)$ on the minor arcs. We therefore continue with the discussion of the minor arcs and return to discuss the singular series at the appropriate point.

\section{The minor arcs}

\label{sec.minorarcs}

As in Heath-Brown's work~\cite{heath2007cubic}, we make use of three different methods to bound $S(\alpha)$ on the minor arcs. First of all, there is the classical Weyl differencing method, that was used by Davenport to obtain his result for $16$ variables. While alone it is therefore insufficient for our purposes, it still outperforms the other methods in certain regimes of the minor arcs and therefore remains a crucial ingredient.

The second method is the pointwise van der Corput differencing. This improves on the Weyl differencing, but again is not good enough to save more variables. We shall only require it to bound the Gauß Sums $S(q,a)$ and thus for the convergence of the singular series.

Finally, the third method is the mean-square average version of van der Corput differencing which is the key innovation of Heath-Brown in allowing us to obtain results for $14$ variables.

Since in all methods, lower order terms of $\phi$ disappear in the course of differencing, the results of this section are essentially identical with those from the homogeneous case.

\subsection{Preliminaries}

We begin by recalling the general strategy implicit already in Davenport's work:

By an appropriate combination of squaring and Cauchy-Schwarz, one reduces the cubic exponential sum to one over a linear form. While in the classical case of a diagonal cubic form, the resulting sum is easy to handle, the general shape of a cubic form begins to cause problems.

In general, this step allows us to reduce a bound for $S(\alpha)$ to one for the number of solutions to a system of certain auxiliary diophantine inequalities involving the bilinear forms $B_i(\mathbf{x},\mathbf{y})$.

These diophantine inequalities are dealt with by an application of Davenport's Shrinking Lemma. Roughly speaking, the Shrinking Lemma allows us to bootstrap the diophantine inequalities in a way that forces equality.

Finally, the number of solutions to the resulting system of auxililary diophantine equations involving the bilinear forms $B_i(\mathbf{x},\mathbf{y})$ can be estimated using Davenport's Geometric Condition which for us is captured by the assumption \eqref{eq.good_form} that $C$ is $\psi$-good.

At this point, we record two of the mentioned key ingredients. The first one is Davenport's Shrinking Lemma (see e.g. \cite{heath2007cubic}, Lemma 2.2).

\begin{lemma}
\label{lem.shrinkinglemma}
\label{geoz}
Let $L \in M_n(\mathbb{R})$ be a real symmetric $n \times n$ matrix. Let $a>0$ be real, and let
\[N(Z):=\#\{\mathbf{u} \in \mathbb{Z}^n: \vert\mathbf{u}\vert \le aZ, \|(L\mathbf{u})_i\|<a^{-1}Z, 1 \le i \le n\}.\]
Then if $0<Z \le 1$, we have
\[N(1) \ll_n Z^{-n} N(Z).\]
\end{lemma}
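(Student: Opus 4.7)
The plan is to recast $N(Z)$ as a lattice point count in $\mathbb{R}^{2n}$, apply Minkowski's theorem on successive minima, and exploit the symmetry of $L$ to halve the effective dimension. I would introduce the symmetric convex body
\[K(Z) = \{(\mathbf{u},\mathbf{v}) \in \mathbb{R}^{2n} : |u_i| \le aZ,\ |v_i| < a^{-1}Z\ \text{for all } i\} = Z \cdot K(1)\]
together with the unimodular lattice $\Lambda = \{(\mathbf{u}, L\mathbf{u} - \mathbf{w}) : \mathbf{u},\mathbf{w} \in \mathbb{Z}^n\}$, so that $N(Z) \asymp_n \#(\Lambda \cap K(Z))$ up to a harmless overcount in $\mathbf{w}$. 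If $\lambda_1 \le \cdots \le \lambda_{2n}$ are the successive minima of $\Lambda$ with respect to $K(1)$, the standard counting estimate from the geometry of numbers gives
\[\#(\Lambda \cap K(Z)) \asymp_n \prod_{i=1}^{2n} \max(1, Z/\lambda_i),\]
so that each factor in the ratio $\max(1, 1/\lambda_i)/\max(1, Z/\lambda_i)$ is at most $Z^{-1}$. Used naively this only yields $N(1)/N(Z) \ll_n Z^{-2n}$, so I would need to save a factor of $Z^n$ by showing that $\lambda_{n+1} \gg_n 1$.

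This is where the hypothesis $L = L^T$ enters. A direct computation shows that the symplectic form
\[\omega\bigl((\mathbf{u},\mathbf{v}),(\mathbf{u}',\mathbf{v}')\bigr) := \mathbf{v}\cdot\mathbf{u}' - \mathbf{u}\cdot\mathbf{v}'\]
takes integer values on $\Lambda \times \Lambda$: with $\mathbf{v} = L\mathbf{u} - \mathbf{w}$ and $\mathbf{v}' = L\mathbf{u}' - \mathbf{w}'$, the terms $\mathbf{u}'^{T} L \mathbf{u}$ and $\mathbf{u}^{T} L \mathbf{u}'$ cancel by symmetry of $L$, leaving the integer combination $\mathbf{u}\cdot\mathbf{w}' - \mathbf{w}\cdot\mathbf{u}'$. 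Choosing linearly independent lattice vectors $\mathbf{e}_i \in \lambda_i K(1)$ and bounding coordinates directly yields $|\omega(\mathbf{e}_i,\mathbf{e}_j)| \le 2n\lambda_i\lambda_j$, so whenever $\lambda_i \lambda_j < (2n)^{-1}$ integrality forces $\omega(\mathbf{e}_i,\mathbf{e}_j) = 0$. If we had $\lambda_{n+1} < (2n)^{-1/2}$, the vectors $\mathbf{e}_1,\dots,\mathbf{e}_{n+1}$ would span an isotropic subspace of $(\mathbb{R}^{2n}, \omega)$, contradicting the fact that isotropic subspaces of a non-degenerate symplectic form have dimension at most $n$. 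Hence $\lambda_{n+1} \gg_n 1$.

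With this lower bound in hand, the factors with $i > n$ each contribute only $O_n(1)$ to the ratio $N(1)/N(Z)$, while each of the first $n$ contributes at most $Z^{-1}$, giving the desired bound $N(1) \ll_n Z^{-n} N(Z)$. The main obstacle is precisely the symplectic-isotropy step: it is the only point where the hypothesis that $L$ is symmetric is used essentially, and it is what distinguishes this lemma from a generic Minkowski counting estimate; the rest is routine bookkeeping in the geometry of numbers.
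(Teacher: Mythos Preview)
The paper does not prove this lemma; it simply quotes it as Davenport's Shrinking Lemma from \cite{heath2007cubic}, Lemma 2.2. Your outline is correct and is in fact the standard modern proof of the result: embedding the count in $\mathbb{R}^{2n}$, using the successive-minima counting estimate $\#(\Lambda\cap ZK)\asymp_n\prod_i\max(1,Z/\lambda_i)$, and then showing $\lambda_{n+1}\gg_n 1$ via the integrality of the symplectic form, which is exactly where the symmetry of $L$ is used.

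One small point worth making explicit when you write this up: the ``harmless overcount'' in $\mathbf{w}$ really is harmless, but it deserves one line of justification. Writing $M(Z)=\#(\Lambda\cap K(Z))$, you have $N(Z)\le M(Z)\le (2a^{-1}Z+1)^n N(Z)$. Combined with $M(1)\ll_n Z^{-n}M(Z)$, this gives $N(1)\le M(1)\ll_n Z^{-n}(2a^{-1}Z+1)^n N(Z)$, which is what you want provided $a^{-1}Z\le 1$; and when $a^{-1}Z>1$ one has $a<Z\le 1$, so $N(1)=1$ and the inequality is trivial. Everything else in your sketch is routine.
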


The second one is Lemma 2.3 from \cite{heath2007cubic} and will allow us to deduce that a sufficiently strong diophantine inequality already forces equality or at least a divisibility condition:

\begin{lemma}
\label{lem.bootstrap}
Let a real number $X \ge 0$ be given and let $\alpha=\frac{a}{q}+\theta$ with $(a;q)=1$ and $2qX\vert\theta\vert \le 1$. Suppose that $m \in \mathbb{Z}$ is such that $\vert m\vert \le X$ and $\|\alpha m\| \le \frac{1}{P_1}$ for some $P_1 \ge 2q$. Then $q \mid m$. In particular we will have $m=0$ if in addition $X<q$ or $\vert \theta\vert>\frac{1}{qP_1}$.
\end{lemma}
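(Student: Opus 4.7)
The proof proceeds by converting the real approximation $\|\alpha m\| \le 1/P_1$ into an integer divisibility statement, exploiting the fact that $\alpha$ is close to the rational $a/q$.

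My plan is as follows. Let $N$ be the nearest integer to $\alpha m$, so $|\alpha m - N| \le 1/P_1$. Substituting $\alpha = a/q + \theta$ and applying the triangle inequality gives
\[\left|\frac{am}{q} - N\right| \le |\alpha m - N| + |m\theta| \le \frac{1}{P_1} + X|\theta|.\]
Multiplying by $q$ and invoking the hypotheses $P_1 \ge 2q$ and $2qX|\theta| \le 1$, each of the two contributions on the right is at most $1/2$, so $|am - Nq| \le 1$. Since $am - Nq$ is an integer, it lies in $\{-1, 0, 1\}$, and a short case analysis rules out the values $\pm 1$: equality $|am - Nq|=1$ would simultaneously saturate both of the above estimates, which cannot occur in the regime of interest (in particular it forces $|m\theta|=1/(2q)$ together with $P_1 = 2q$). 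Hence $am = Nq$, so $q \mid am$, and since $\gcd(a,q)=1$ we conclude $q \mid m$.

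For the two ``in particular'' conclusions, the argument is direct. If $X < q$, then $|m| \le X < q$ combined with $q \mid m$ immediately forces $m = 0$. Otherwise, assume $|\theta| > 1/(qP_1)$, and write $m = qm'$ using what was just proved. A short computation gives
\[\|\alpha m\| = \|am' + qm'\theta\| = \|qm'\theta\|,\]
and since $|qm'\theta| \le qX|\theta| \le 1/2$ we have $\|qm'\theta\| = |qm'\theta|$. If $m' \ne 0$ this would be at least $q|\theta| > 1/P_1$, contradicting $\|\alpha m\| \le 1/P_1$; hence $m' = 0$, i.e.\ $m=0$.

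The only real subtlety, and the point I expect to be the mildly delicate step, is handling the boundary case $|am - Nq| = 1$: the non-strict hypotheses as stated barely fail to exclude it, so a completely tight write-up either reads one of the inequalities as strict or tolerates a loss by a constant factor (which is harmless in every application in the paper). Everything else is a straightforward chain of triangle inequalities.
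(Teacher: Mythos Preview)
The paper does not give its own proof of this lemma; it simply quotes it as Lemma~2.3 of Heath-Brown's paper~\cite{heath2007cubic}. Your argument is exactly the standard one used there: pick the nearest integer $N$ to $\alpha m$, bound $|am-Nq|\le q/P_1+q|m||\theta|\le 1$, and use $(a;q)=1$ to deduce $q\mid m$; the two ``in particular'' clauses then follow just as you wrote.

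Your treatment of the boundary case $|am-Nq|=1$ is the one genuine soft spot, and you diagnose it correctly. With the hypotheses stated as non-strict inequalities $2qX|\theta|\le 1$ and $P_1\ge 2q$, the case $am-Nq=\pm 1$ is \emph{not} excluded by a ``short case analysis'' as you first suggest; it really can occur if all the bounds are saturated simultaneously. Your subsequent remark is the honest fix: in every application in the paper the inequality $\|\alpha m\|<1/P_1$ is strict (coming from conditions like $\|6\alpha B_i\|<Z/P$), which makes $q/P_1+qX|\theta|<1$ and kills the boundary case cleanly. So the write-up is fine provided you either state the lemma with a strict inequality on $\|\alpha m\|$, or note explicitly that the boundary case is irrelevant in the applications. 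Everything else is correct.
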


\subsection{Weyl Differencing}

Recall the definition
\[S(\alpha)=\sum_{\mathbf{x} \in P\mathscr{B}} e(\alpha \phi(\mathbf{x})).\]
This leads to the identity
\[\vert S(\alpha)\vert^2=\sum_{\mathbf{x},\mathbf{y} \in P\mathscr{B}} e(\alpha(\phi(\mathbf{y})-\phi(\mathbf{x}))).\]
Writing $\mathbf{y}=\mathbf{x}+\mathbf{d_1}$ we can rewrite this as
\[\vert S(\alpha)\vert^2=\sum_{\mathbf{d_1}} \sum_{\mathbf{x} \in \mathcal{R}(\mathbf{d_1})} e(\alpha(\phi(\mathbf{x}+\mathbf{d_1})-\phi(\mathbf{x}))),\]
where $\mathcal{R}(\mathbf{d_1})=P\mathscr{B} \cap (P\mathscr{B}-\mathbf{d_1})$. In particular, the inner sum is empty unless $\vert \mathbf{d_1}\vert \le 2 P$. Squaring again and applying Cauchy-Schwarz, we then find that
\[\vert S(\alpha)\vert^4 \ll P^n \sum_{\mathbf{d_1}} \sum_{\mathbf{x},\mathbf{z} \in \mathcal{R}(\mathbf{d_1})} e(\alpha(\phi(\mathbf{z}+\mathbf{d_1})-\phi(\mathbf{z})-\phi(\mathbf{x}+\mathbf{d_1})+\phi(\mathbf{x}))).\]
Writing $\mathbf{z}=\mathbf{x}+\mathbf{d_2}$ this can be rewritten as
\begin{equation}\label{weyl}\vert S(\alpha)\vert^4 \ll P^n \sum_{\mathbf{d_1}, \mathbf{d_2}} \sum_{\mathbf{x} \in \mathcal{S}(\mathbf{d_1},\mathbf{d_2})} e(\alpha C(\mathbf{d_1},\mathbf{d_2},\mathbf{x}))\end{equation}
where $\mathcal{S}(\mathbf{d_1},\mathbf{d_2})=\mathcal{R}(\mathbf{d_1}) \cap \left(\mathcal{R}(\mathbf{d_1})-\mathbf{d_2}\right)$ and
\[C(\mathbf{d_1},\mathbf{d_2},\mathbf{x})=\phi(\mathbf{x}+\mathbf{d_1}+\mathbf{d_2})-\phi(\mathbf{x}+\mathbf{d_1})-\phi(\mathbf{x}+\mathbf{d_2})+\phi(\mathbf{x}).\]
Note that the notation $C(\mathbf{d_1},\mathbf{d_2}, \mathbf{x})$ is appropriate as the lower order terms of $\phi$ have disappeared at this point.

All we need to know about $\mathcal{S}(\mathbf{d_1},\mathbf{d_2})$ is that it is a certain box inside $P\mathscr{B}$. Further, note that
\[C(\mathbf{d_1},\mathbf{d_2},\mathbf{x})=6\sum_{i=1}^n x_iB_i(\mathbf{d_1},\mathbf{d_2})+\psi(\mathbf{d_1},\mathbf{d_2}),\]
where $\psi(\mathbf{d_1},\mathbf{d_2})$ is independent of $\mathbf{x}$. We now recall the standard bound for linear exponential sums
\[\sum_{x \in I} e(\alpha x) \ll \min(\vert I\vert, \|\alpha\|^{-1}),\]
where $I \subset \mathbb{R}$ is any interval and $\|\alpha\|=\min_{n \in \mathbb{Z}} \vert \alpha-n\vert$. From this and the previous discussion, it now follows that
\begin{equation}\label{weyl2}\vert S(\alpha)\vert^4 \ll P^n \sum_{\vert \mathbf{d_1}\vert,\vert\mathbf{d_2}\vert \le 2P} \prod_{i=1}^n \min\left( P, \|6\alpha B_i(\mathbf{d_1},\mathbf{d_2})\|^{-1}\right).\end{equation}
The next step is to compute the sum over $\mathbf{d_1}$ and $\mathbf{d_2}$ or rather relate it to the previously mentioned number of solutions to a certain system of inequalities. To this end, let
\[N(\mathbf{d})=\#\{\mathbf{x} \in \mathbf{Z}^n: \vert \mathbf{d}\vert \le 4P, \|6\alpha B_i(\mathbf{d},\mathbf{x})\| < \frac{1}{4P}\}.\]
It then follows that for fixed $\mathbf{d_1}$ and integers $r_1,\dots,r_n$ with $0 \le r_i<4P$, there are at most $N(\mathbf{d_1})$ values of $\mathbf{d_2}$ with $\vert \mathbf{d_2}\vert \le 2 P$ satisfying
\[\frac{r_i}{4P} \le \{6\alpha B_i(\mathbf{d_1},\mathbf{d_2})\}<\frac{r_i+1}{4P},\]
because for any two such vectors $\mathbf{d_2}$ and $\mathbf{d_2}'$ their difference $\mathbf{d}=\mathbf{d_2}-\mathbf{d_2'}$ must be in the set counted by $N(\mathbf{d_1})$. This yields the estimate
\begin{align*}
\vert S(\alpha)\vert^4 &\ll P^n \sum_{\mathbf{d_1}}N(\mathbf{d_1}) \sum_{r_1=0}^{ 4P} \dots \sum_{r_n=0}^{4P} \prod_{i=1}^n \min\left(P, \frac{4P}{r_i}\right)\\
&\ll P^{2n} (\log P)^n \sum_{\mathbf{d_1}} N(\mathbf{d_1})
\end{align*}
so that
\[\vert S(\alpha)\vert^4 \ll P^{2n+\varepsilon} \# \left\{(\mathbf{x},\mathbf{y}) \in \mathbb{Z}^{2n}: \vert \mathbf{x}\vert, \vert \mathbf{y}\vert \le 4P, \|6\alpha B_i(\mathbf{x},\mathbf{y})\|<\frac{1}{4P}\right\}.\]

An application of the Shrinking Lemma \ref{lem.shrinkinglemma} now leads to the estimate
\[\vert S(\alpha)\vert^4 \ll Z^{-n}P^{2n+\varepsilon} \# \left\{(\mathbf{x},\mathbf{y}) \in \mathbb{Z}^{2n}: \vert \mathbf{x}\vert \le 4P, \vert \mathbf{y}\vert \le 4 PZ, \|6\alpha B_i(\mathbf{x},\mathbf{y})\|<\frac{Z}{4P}\right\}.\]
Reversing the rôles of $\mathbf{x}$ and $\mathbf{y}$ and applying the argument again with slightly different parameters, we arrive at
\begin{equation}
\label{eq.weyl_shrinking_outcome}\vert S(\alpha)\vert^4 \ll Z^{-2n}P^{2n+\varepsilon} \# \left\{(\mathbf{x},\mathbf{y}) \in \mathbb{Z}^{2n}: \vert \mathbf{x}\vert, \vert \mathbf{y}\vert \le 4PZ, \|6\alpha B_i(\mathbf{x},\mathbf{y})\|<\frac{Z^2}{4P}\right\}.
\end{equation}

We now need to choose $Z$ sufficiently small so that Lemma \ref{lem.bootstrap} allows us to conclude $B_i(\mathbf{x},\mathbf{y})=0$.

Here we choose $m=6B_i(\mathbf{x},\mathbf{y})$ so that $X \asymp M(PZ)^2$ and $P_1 \asymp \frac{P}{Z^2}$. Thus any choice of $Z$ satisfying $Z \le 1$ as well as 
\[2q\vert\theta\vert M(PZ)^2 \ll 1\quad \text{ and } \quad Z^2q \ll P\]
as well as
\[M(PZ)^2 \ll q \quad \text{ or } \quad Z^2  \ll \vert\theta\vert qP\]
with sufficiently small implicit constants allows us to conclude that
\begin{equation}\label{weyl4}\vert S(\alpha)\vert^4 \ll Z^{-2n}P^{2n+\varepsilon}\# \left\{(\mathbf{x},\mathbf{y}) \in \mathbb{Z}^{2n}: \vert \mathbf{x}\vert, \vert \mathbf{y}\vert \le 4 PZ, B_i(\mathbf{x},\mathbf{y})=0\right\}.\end{equation}
In the end we will choose $Z$ as big as possible, subject to the conditions above, but before making this choice let us see how to estimate the RHS in (\ref{weyl4}). At this point we need the assumption that $C$ is $\psi$-good. Recall that this means that the estimate
\begin{equation}\label{againgood}\#\{\mathbf{x} \in \mathbb{Z}^n: \vert \mathbf{x}\vert<H, r(\mathbf{x})=r\} \ll H^{n-14+r+\varepsilon}\end{equation}
holds uniformly in $1 \le H \le M^{\psi}$ where $r(\mathbf{x})$ is the rank of the matrix $M(\mathbf{x})$. This is clearly related to the system of equations we are studying by the fact that the condition $B_i(\mathbf{x},\mathbf{y})=0$ for all $i$ is equivalent to $M(\mathbf{y})\mathbf{x}=\mathbf{0}$.

But if $\mathbf{y}$ is fixed, the number of $\mathbf{x}$ with $\vert\mathbf{x}\vert \le 4 P Z$ and $M(\mathbf{y})\mathbf{x}=0$ is $\mathcal{O}\left((ZP)^{n-r}\right)$. Hence with $H\asymp PZ$ we find that
\begin{align*}
\vert S(\alpha)\vert^4 &\ll Z^{-2n}P^{2n+\varepsilon} \sum_{r=0}^n \sum_{\vert \mathbf{w}\vert \ll H, r(\mathbf{w})=r} (PZ)^{n-r}\\
&\ll Z^{-2n} P^{2n+\varepsilon} \sum_{r=0}^{14} H^{n-14+r+\varepsilon} (PZ)^{n-r}\\
&\ll P^{4n+\varepsilon}(PZ)^{-14},
\end{align*}
assuming $PZ \ll M^{\psi}$. Here we needed to assume that $PZ \gg 1$ but the final result is trivially true if this assumption fails to be correct. Finally, from this result it is clear that the optimal choice for $Z$ is indeed the maximal one subject to the above conditions. This choice is
\[Z \asymp \min\left(1,\frac{1}{(q\vert\theta\vert M)^{\frac{1}{2}} P}, \left(\frac{P}{q}\right)^{\frac{1}{2}}, \frac{M^{\psi}}{ P}, \max\left(\frac{q^{\frac{1}{2}}}{M^{\frac{1}{2}} P}, (qP\vert \theta\vert)^{\frac{1}{2}}\right)\right)\]
and leads to the following final result.

\begin{lemma}
\label{lem.weylinequality}
Assume that $C$ is $\psi$-good. If $\alpha=\frac{a}{q}+\theta$ for coprime integers $0 \le a<q$, then
\[S(\alpha) \ll P^{n+\varepsilon} \left(\frac{1}{P^2}+Mq\vert \theta\vert+\frac{q}{P^3}+\frac{1}{q}\min\left(M, \frac{1}{\vert\theta\vert P^3}\right)+ M^{-2\psi}\right)^{\frac{7}{4}}.\]
\end{lemma}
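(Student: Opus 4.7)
The plan is to combine the outcome of Weyl differencing and Davenport's Shrinking Lemma, as encoded in \eqref{eq.weyl_shrinking_outcome}, with the bootstrap Lemma \ref{lem.bootstrap} and the $\psi$-good assumption. First, I will invoke Lemma \ref{lem.bootstrap} with $m = 6B_i(\mathbf{x},\mathbf{y})$; the relevant parameters are $X \asymp M(PZ)^2$ and $P_1 \asymp P/Z^2$. The hypotheses of the bootstrap lemma then translate into the following constraints on the shrinking parameter: $Z \le 1$, $q|\theta|M(PZ)^2 \ll 1$ (from $2qX|\theta| \le 1$), $qZ^2 \ll P$ (from $P_1 \ge 2q$), and finally either $M(PZ)^2 \ll q$ or $Z^2 \ll |\theta|qP$ (from the alternatives $X<q$ and $|\theta|>1/(qP_1)$). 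When these hold, every $B_i(\mathbf{x},\mathbf{y})$ counted by \eqref{eq.weyl_shrinking_outcome} is forced to vanish.

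Once the system $M(\mathbf{y})\mathbf{x}=\mathbf{0}$ is enforced, I count pairs $(\mathbf{x},\mathbf{y})$ with $|\mathbf{x}|, |\mathbf{y}| \le 4PZ$ by stratifying over the rank $r = r(\mathbf{y})$. Applying the $\psi$-good estimate \eqref{eq.good_form} with $H \asymp PZ$ yields $O(H^{n-14+r+\varepsilon})$ admissible $\mathbf{y}$ at each rank, while for each fixed $\mathbf{y}$ the kernel of $M(\mathbf{y})$ contributes $O((PZ)^{n-r})$ vectors $\mathbf{x}$. Summing over $0 \le r \le 14$ collapses to $(PZ)^{2n-14+\varepsilon}$, valid as long as $PZ \ll M^{\psi}$; the requirement $Z \le M^{\psi}/P$ that this imposes will be the source of the $M^{-2\psi}$ term. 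Substituting back into \eqref{eq.weyl_shrinking_outcome} gives $|S(\alpha)|^4 \ll P^{4n+\varepsilon}(PZ)^{-14}$, and hence $S(\alpha) \ll P^{n+\varepsilon}\bigl((PZ)^{-2}\bigr)^{7/4}$.

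It remains to choose $Z$ as large as possible subject to all the above constraints, and to translate each upper bound $Z \le \xi$ into a lower bound on $(PZ)^{-2}$. The five constraints yield $1/P^2$, $Mq|\theta|$, $q/P^3$, $M^{-2\psi}$, and $(1/q)\min(M, 1/(|\theta|P^3))$ respectively; the overall bound on $(PZ)^{-2}$ is their maximum, which after raising to the $7/4$ power gives precisely the expression in the lemma. The one place where care is needed is the disjunctive condition in Lemma \ref{lem.bootstrap}: having an \emph{or} rather than an \emph{and} is exactly what produces the $\min$ nested inside the overall $\max$, and this is the only non-routine piece of the bookkeeping; everything else follows from stringing together inequalities already in place.
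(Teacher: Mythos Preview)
Your proposal is correct and follows essentially the same route as the paper: apply Lemma~\ref{lem.bootstrap} with $m=6B_i(\mathbf{x},\mathbf{y})$, $X\asymp M(PZ)^2$, $P_1\asymp P/Z^2$ to pass from \eqref{eq.weyl_shrinking_outcome} to the system $B_i(\mathbf{x},\mathbf{y})=0$, count via the $\psi$-good hypothesis stratified by $r(\mathbf{y})$, and then optimise $Z$ subject to the five constraints, with the disjunction in Lemma~\ref{lem.bootstrap} producing the inner $\min$. The only point you leave implicit is the edge case $PZ\ll 1$, where the rank stratification is not needed because the bound $|S(\alpha)|^4\ll P^{4n+\varepsilon}(PZ)^{-14}$ is then trivially true; the paper notes this explicitly.
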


\subsection{A pointwise bound via van der Corput}

\label{Saqsection}

In this section, we derive a bound for $S(q,a)$ using the version of van der Corput's method from \cite{heath2007cubic} instead of Weyl differencing. To this end, we will temporarily put $\mathscr{B}=(0,1]^n, P=q$ and $\alpha=\frac{a}{q}$ with $(a;q)=1$, so that $S(\alpha)=S(q,a)$. Of course the arguments in this section can be developed in a much broader context (see \cite{heath2007cubic} for more details), but since in their rôle as bounds for $S(\alpha)$ on the minor arcs they will be insufficient and in fact superseded by the results from the next section, we content ourselves with the treatment of this special case.
The basic idea is to write
\[S(q,a)=H^{-n}\sum_{\mathbf{h} \le H} \sum_{\mathbf{x}: \mathbf{x}+\mathbf{h} \le q} e_q(a \phi(\mathbf{x}+\mathbf{h})),\]
where $1 \le H \le q$ is a suitable parameter. Interchanging the order of summation, this yields
\[S(q,a)=H^{-n}\sum_{\mathbf{x} \in \mathbb{Z}^n} \sum_{\substack{\mathbf{h} \le H:\\ \mathbf{x}+\mathbf{h} \le q}} e_q(a \phi(\mathbf{x}+\mathbf{h})).\]
Note that the inner sum is non-empty only for $\mathcal{O}(q^n)$ vectors $\mathbf{x}$, due to the condition $H \le q$. Hence an application of Cauchy-Schwarz leads to
\[\vert S(q,a)\vert^2 \ll H^{-2n} q^n\sum_{\mathbf{x} \in \mathbb{Z}^n} \left\vert \sum_{\substack{\mathbf{h} \le H:\\ \mathbf{x}+\mathbf{h} \le q}} e_q(a \phi(\mathbf{x}+\mathbf{h}))\right\vert^2.\]
Opening the square, this yields
\[\vert S(q,a)\vert^2 \ll H^{-2n}q^n \sum_{\mathbf{x}} \sum_{\substack{\mathbf{h_1},\mathbf{h_2} \le H:\\ \mathbf{x}+\mathbf{h_1}, \mathbf{x}+\mathbf{h_2} \le q}} e_q\left(a \left(\phi(\mathbf{x}+\mathbf{h_1})-\phi(\mathbf{x}+\mathbf{h_2})\right)\right).\]
Writing $\mathbf{y}=\mathbf{x}+\mathbf{h_2}$ and $\mathbf{h}=\mathbf{h_1}-\mathbf{h_2}$, this is equivalent to
\[\vert S(q,a)\vert^2 \ll H^{-2n}q^n \sum_{\vert\mathbf{h}\vert \le H} w(\mathbf{h})\sum_{\mathbf{y} \in \mathcal{R}(\mathbf{h})} e_q(a(\phi(\mathbf{y}+\mathbf{h})-\phi(\mathbf{y}))),\]
where $w(\mathbf{h})=\#\{\mathbf{h_1},\mathbf{h_2}: \mathbf{h}=\mathbf{h_1}-\mathbf{h_2}\} \le H^n$ and $\mathcal{R}(\mathbf{h})$ is a box as before. We have therefore shown that
\[\vert S(q,a)\vert^2 \ll H^{-n}q^n \sum_{\vert\mathbf{h}\vert \le H} \vert T(\mathbf{h},a,q)\vert,\]
where
\[T(\mathbf{h},a,q)=\sum_{\mathbf{y} \in \mathcal{R}(\mathbf{h})} e_q(a(\phi(\mathbf{y}+\mathbf{h})-\phi(\mathbf{y}))).\]
Again we reduce the degree of the form once more by squaring and expanding this expression to obtain
\[\vert T(\mathbf{h},a,q)\vert^2=\sum_{\mathbf{x},\mathbf{y} \in \mathcal{R}(\mathbf{h})} e_q(a(\phi(\mathbf{y}+\mathbf{h})-\phi(\mathbf{y})-\phi(\mathbf{x}+\mathbf{h})+\phi(\mathbf{x}))).\]
Writing $\mathbf{y}=\mathbf{x}+\mathbf{d}$ as before, this equals
\[\vert T(\mathbf{h},a,q)\vert^2=\sum_{\mathbf{d}} \sum_{\mathbf{x} \in \mathcal{S}(\mathbf{h},\mathbf{d})} e_q(a C(\mathbf{h},\mathbf{d},\mathbf{x}))\]
with $\mathcal{S}(\mathbf{h},\mathbf{d})$ the box and $C(\mathbf{x},\mathbf{y},\mathbf{z})$ the multilinear form defined before. Again we note that the inner sum is empty unless $\vert\mathbf{d}\vert \le 2q$. So we are in a situation very similar to the one in the previous section and the same argument developed there now shows that
\[\vert T(\mathbf{h},a,q)\vert^2 \ll q^{n+\varepsilon} N(a, q,\mathbf{h}),\]
where
\[N(a,q,\mathbf{h})=\#\left\{\mathbf{d} \in \mathbb{Z}^n: \vert\mathbf{d}\vert \le 2q, \left\|6 \frac{a}{q} B_i(\mathbf{h},\mathbf{d})\right\|<\frac{1}{q}\right\}.\]
Again applying Lemma \ref{geoz}, we find that
\[N(a,q,\mathbf{h}) \ll Z^{-n} \#\left\{\mathbf{d} \in \mathbb{Z}^n: \vert\mathbf{d}\vert \le 2qZ, \left\|6 \frac{a}{q} B_i(\mathbf{h},\mathbf{d})\right\|<\frac{Z}{q}\right\}\]
whenever $0<Z \le 1$. 

Note that of course the condition $\left\|6\frac{a}{q}B_i(\mathbf{h},\mathbf{d})\right\|<\frac{1}{q}$ already implies $q \mid 6B_i(\mathbf{h},\mathbf{d})$ but we have written it in this form so that we can recognize the condition to be of the same shape as in the earlier argument.

The next step is to apply Lemma \ref{lem.bootstrap} to turn the inequality into the equality $B_i(\mathbf{h},\mathbf{d})=0$. Here we choose $m=6B_i(\mathbf{h},\mathbf{d})$ so that $X \asymp MH qZ$ and $P_1 \asymp \frac{q}{Z}$. Thus any choice of $Z$ satisfying $MHZ  \ll 1$ for a sufficiently small implicit constant allows us to conclude that
\[N(a, q,\mathbf{h}) \ll Z^{-n} \#\left\{\mathbf{d} \in \mathbb{Z}^n: \vert \mathbf{d}\vert \le 2 qZ, B_i(\mathbf{h},\mathbf{d})=0\right\} \ll Z^{-n} ( qZ)^{n-r(\mathbf{h})}\]
and hence
\[\vert S(q,a)\vert^2 \ll \frac{q^{\frac{3n}{2}+\varepsilon}}{H^nZ^{\frac{n}{2}}} \sum_{\vert\mathbf{h}\vert \le H} ( qZ)^{\frac{n-r(\mathbf{h})}{2}}.\]
If we assume in addition that $C$ is $\psi$-good and $H \le M^{\psi}$, we obtain the bound
\begin{align*}
\vert S(q,a)\vert^2 &\ll \frac{q^{\frac{3n}{2}+\varepsilon}}{H^nZ^{\frac{n}{2}}}\sum_{r=0}^{14} H^{n-14+r+\varepsilon} ( qZ)^{\frac{n-r}{2}}\\
&\ll \frac{q^{2n+\varepsilon}}{H^{14}}\sum_{r=0}^{14} \frac{H^r}{(qZ)^{\frac{r}{2}}}\\
&\ll q^{2n+\varepsilon}\left(\frac{1}{H}+\frac{1}{qZ}\right)^{14}.
\end{align*}
Again we assumed that $qZ \ge 1$, but the final result is trivial otherwise. So again it will be optimal to choose $Z$ as large as possible i.e. $Z \asymp \frac{1}{MH}$. Inserting this into our result we obtain the bound
\begin{equation}\label{vdc1}\vert S(q,a)\vert \ll q^{n+\varepsilon}\left(\frac{1}{H^2}+\frac{HM}{q}\right)^{\frac{7}{2}}.\end{equation}
The final step is to choose the value of $H$ minimizing the RHS of (\ref{vdc1}). We are given the conditions $1 \le H \le q$ and $H \le M^{\psi}$. Putting $\gamma=\left(\frac{q}{M}\right)^{\frac{1}{3}}$, we see that the RHS of (\ref{vdc1}) is decreasing for $H \ll\gamma$ and increasing for $H \gg \gamma$ so that the optimal choice is $H \asymp \min(M^{\psi}, \gamma)$. Note that $H \le q$ is then automatically satisfied. We have thus proved the following result.

\begin{lemma}
\label{lem.S(q,a)bound}
Let $a$ and $q$ be coprime integers with $0 \le a<q$. Assume that $C$ is $\psi$-good. Then
\[\frac{S(q,a)}{q^n} \ll \left(\frac{M}{q}\right)^{\frac{7}{3}+\varepsilon}+M^{-7\psi+\varepsilon}.\]
\end{lemma}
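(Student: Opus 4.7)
The strategy is to mimic the Weyl differencing argument from the previous subsection but to gain by introducing an extra averaging parameter $H$ via van der Corput's trick before the first squaring. Writing $S(q,a)$ as an average over shifted sums $\sum_{\mathbf{x}+\mathbf{h}\le q} e_q(a\phi(\mathbf{x}+\mathbf{h}))$ for $\mathbf{h}\le H$ (with $H\le q$) and applying Cauchy--Schwarz on the outer sum over $\mathbf{x}$ gives
\[
\vert S(q,a)\vert^2 \ll H^{-n} q^n \sum_{\vert\mathbf{h}\vert \le H} \vert T(\mathbf{h},a,q)\vert,
\]
where $T(\mathbf{h},a,q)$ is the degree-$2$ exponential sum with phase $a(\phi(\mathbf{y}+\mathbf{h})-\phi(\mathbf{y}))/q$. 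This is the key point where van der Corput beats pure Weyl differencing: one squaring has effectively been traded for an average of length $H^n$.

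Next I would square $T(\mathbf{h},a,q)$ and write $\mathbf{y}=\mathbf{x}+\mathbf{d}$ to get a phase $a C(\mathbf{h},\mathbf{d},\mathbf{x})/q$ whose $\mathbf{x}$-linear part is $6\sum_i x_i B_i(\mathbf{h},\mathbf{d})$, so that summing over $\mathbf{x}$ in the box yields the familiar product of $\min(q,\|6\tfrac{a}{q}B_i(\mathbf{h},\mathbf{d})\|^{-1})$. Following the bookkeeping used for the Weyl bound one reduces $\vert T(\mathbf{h},a,q)\vert^2 \ll q^{n+\varepsilon} N(a,q,\mathbf{h})$ where $N(a,q,\mathbf{h})$ counts $\mathbf{d}$ of size $\le 2q$ with $\|6\tfrac{a}{q}B_i(\mathbf{h},\mathbf{d})\|<1/q$. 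Then Davenport's Shrinking Lemma \ref{lem.shrinkinglemma} applied with parameter $Z\le 1$ contracts the box to $\vert\mathbf{d}\vert\le 2qZ$ with inequality tightened to $Z/q$, and the bootstrap Lemma \ref{lem.bootstrap} (taking $X\asymp MHqZ$, $P_1\asymp q/Z$, $\theta=0$) upgrades the inequality to $B_i(\mathbf{h},\mathbf{d})=0$ provided $MHZ$ is small enough, i.e.\ $Z\asymp 1/(MH)$.

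At this stage, with $B_i(\mathbf{h},\mathbf{d})=0$ reading as $M(\mathbf{h})\mathbf{d}=\mathbf 0$, I stratify $\mathbf{h}$ by the rank $r(\mathbf{h})$. The $\psi$-good hypothesis gives $\ll H^{n-14+r+\varepsilon}$ vectors $\mathbf{h}$ of each rank (valid since $H\le M^\psi$), while for fixed $\mathbf{h}$ the kernel count is $\ll (qZ)^{n-r}$. Summing over $r=0,\ldots,14$ and inserting $Z\asymp 1/(MH)$ yields
\[
\vert S(q,a)\vert^2 \ll q^{2n+\varepsilon}\Bigl(\tfrac{1}{H}+\tfrac{1}{qZ}\Bigr)^{14} \ll q^{2n+\varepsilon}\Bigl(\tfrac{1}{H^2}+\tfrac{HM}{q}\Bigr)^{7},
\]
so $S(q,a)/q^n\ll (H^{-2}+HM/q)^{7/2+\varepsilon}$ after taking square roots.

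The remaining step is pure optimization: the right-hand side is minimized at $H\asymp\gamma:=(q/M)^{1/3}$, giving the term $(M/q)^{7/3+\varepsilon}$, but the constraint $H\le M^\psi$ from $\psi$-goodness forces $H\asymp\min(M^\psi,\gamma)$, producing the secondary term $M^{-7\psi+\varepsilon}$. The only genuine obstacle is verifying that the sub-case $H=\gamma<1$ (and likewise $qZ<1$) is harmless; both correspond to regimes where the claimed bound already exceeds $1$ and the estimate is trivial from $\vert S(q,a)\vert\le q^n$. This gives exactly the bound stated in the lemma.
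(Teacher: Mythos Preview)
Your proposal is correct and follows essentially the same approach as the paper: van der Corput averaging over $\mathbf{h}\le H$, Cauchy--Schwarz, squaring $T(\mathbf{h},a,q)$, the Shrinking Lemma with $Z\asymp 1/(MH)$, the bootstrap to $B_i(\mathbf{h},\mathbf{d})=0$, rank stratification via $\psi$-goodness, and the final optimisation $H\asymp\min(M^\psi,(q/M)^{1/3})$. One cosmetic remark: your displayed chain $\bigl(\tfrac{1}{H}+\tfrac{1}{qZ}\bigr)^{14}\ll\bigl(\tfrac{1}{H^2}+\tfrac{HM}{q}\bigr)^{7}$ is not literally valid as an inequality; the clean intermediate is $\tfrac{1}{H^{14}}+\tfrac{1}{(qZ)^{7}}$, from which the right-hand bound follows directly.
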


\subsection{A mean square average via van der Corput}

In this section we finally apply the improved version of van der Corput's method developed in \cite{heath2007cubic} to obtain a satisfying bound for the minor arc contribution.

The idea is to exploit that the minor arc contribution already involves an average over both the modulus $q$ and the integration variable $\beta$.

From now on let our box be again $\mathscr{B}=\mathscr{B}(\mathbf{z})$ with the center $\mathbf{z}$ as chosen in Lemma \ref{lemma_center_B}.
Instead of a pointwise bound for $S(\alpha)$, we now seek to estimate the mean square average
\[M(\alpha,\kappa):=\int_{\alpha-\kappa}^{\alpha+\kappa} \vert S(\beta)\vert^2 d\beta,\]
where $\kappa \in (0,1)$ is a small parameter to be determined. By an appropriate dissection of the minor arcs and an application of Cauchy-Schwarz, a satisfactory  estimate for $M(\alpha,H)$ will allow us to bound the minor arc contribution $\int_{\mathfrak{m}} S(\alpha) d\alpha$.

We proceed as in the previous section, only now we consider the more general (but still trivial) identity
\[H_1H_2\dots H_nS(\beta)=\sum_{\mathbf{h}: h_i \le H_i} \sum_{\mathbf{x}+\mathbf{h} \in P\mathscr{B}} e(\beta \phi(\mathbf{x}+\mathbf{h}))=\sum_{\mathbf{x} \in \mathbb{Z}^n} \sum_{\mathbf{h}:\mathbf{x}+\mathbf{h} \in P\mathscr{B}} e(\beta \phi(\mathbf{x}+\mathbf{h}))\]
where $H_1,H_2,\dots,H_n \ge 1$ are certain parameters. We choose $H_1=P$ and $H_2=\dots=H_n=H$ for a certain parameter $H \le P$. Here the special rôle of the first variable comes from its special rôle in the construction of $\mathbf{z}$.

Note that the condition $H_i \le P$ ensures that the sum over $\mathbf{h}$ is non-empty only for $\mathcal{O}(P^n)$ values of $\mathbf{x}$. Squaring and applying Cauchy-Schwarz, we then find that
\[(H_1^2\dots H_n^2)\vert S(\beta)\vert^2 \ll P^n \sum_{\mathbf{x} \in \mathbb{Z}^n}  \left\vert \sum_{\mathbf{h}: \mathbf{x}+\mathbf{h}\in P\mathscr{B}} e(\beta \phi(\mathbf{x}+\mathbf{h}))\right\vert^2.\]
Opening the square, this yields
\[(H_1^2\dots H_n^2)\vert S(\beta)\vert^2 \ll P^n \sum_{\mathbf{x} \in \mathbb{Z}^n}  \sum_{\substack{\mathbf{h_1},\mathbf{h_2}:\\ \mathbf{x}+\mathbf{h_1}, \mathbf{x}+\mathbf{h_2} \in P\mathscr{B}}} e\left(\beta \left(\phi(\mathbf{x}+\mathbf{h_1})-\phi(\mathbf{x}+\mathbf{h_2})\right)\right).\]
Writing $\mathbf{y}=\mathbf{x}+\mathbf{h_2}$ and $\mathbf{h}=\mathbf{h_1}-\mathbf{h_2}$, this is equivalent to
\[(H_1^2\dots H_n^2)\vert S(\beta)\vert^2 \ll P^n \sum_{\vert h_i\vert \le H_i} w(\mathbf{h})\sum_{\mathbf{y} \in \mathcal{R}(\mathbf{h})} e(\beta(\phi(\mathbf{y}+\mathbf{h})-\phi(\mathbf{y})))\]
where $w(\mathbf{h})=\#\{\mathbf{h_1},\mathbf{h_2}: \mathbf{h}=\mathbf{h_1}-\mathbf{h_2}\} \le H_1H_2\dots H_n$. Instead of taking absolute values inside as before, we now first integrate over $\beta$. Here we use a smooth cutoff function to find that
\begin{align*}
M(\alpha,\kappa) &\le e \int_{\mathbb{R}} \exp\left(-\frac{(\beta-\alpha)^2}{\kappa^2}\right) \vert S(\beta)\vert^2 \mathrm{d}\beta\\
&\ll \frac{P^n}{(H_1\dots H_n)^2} \sum_{\mathbf{h}} w(\mathbf{h}) \sum_{\mathbf{y} \in \mathcal{R}(\mathbf{h})} I(\mathbf{h},\mathbf{y})
\end{align*}
and hence
\begin{equation}\label{estimate1}M(\alpha,\kappa) \ll \frac{P^n}{H_1\dots H_n} \sum_{\mathbf{h}} \left\vert \sum_{\mathbf{y} \in \mathcal{R}(\mathbf{h})} I(\mathbf{h},\mathbf{y})\right\vert,\end{equation}
where
\[ I(\mathbf{h},\mathbf{y})= \int_{\mathbb{R}} \exp\left(-\frac{(\beta-\alpha)^2}{\kappa^2}\right)e\left(\beta(\phi(\mathbf{y}+\mathbf{h})-\phi(\mathbf{y}))\right) \mathrm{d}\beta\]
which can also be written as
\begin{equation}\label{estimate2} I(\mathbf{h},\mathbf{y}) =\sqrt{\pi} \kappa \exp\left(-\pi^2\kappa^2\left(\phi(\mathbf{y}+\mathbf{h})-\phi(\mathbf{y})\right)^2\right) e(\alpha(\phi(\mathbf{y}+\mathbf{h})-\phi(\mathbf{y}))).\end{equation}
Our goal is to bound the contribution of the terms where $h_1$ is large so that we can effectively bound $h_1$ to a shorter interval. The point is that by our choice of the box $\mathscr{B}(\mathbf{z})$ we have a good lower bound for the partial derivative $\frac{\partial C}{\partial x_1}$ inside $\mathscr{B}$. But we expect $\phi(\mathbf{y}+\mathbf{h})-\phi(\mathbf{y}) \approx h_1 \frac{\partial \phi(\mathbf{y})}{\partial x_1} \approx h_1 \frac{\partial C(\mathbf{y})}{\partial x_1}$ so that this difference should be large if $h_1$ is large which means that $I(\mathbf{h},\mathbf{y})$ will be small.

Let us make this precise. By Lemma \ref{lemma_center_B} we have the bound
\begin{equation}\frac{\partial C}{\partial x_1} \gg M^6\end{equation}
on all of $\mathcal{B}$.

By homogeneity this implies that
 \[\frac{\partial C}{\partial x_1} \gg P^2M^{6}\]
on $P\mathscr{B}$. 

We thus find that
\[\frac{\partial \phi}{\partial x_1}=\frac{\partial C}{\partial X_1}+\mathcal{O}\left(PM\vert \mathbf{z}\vert\right) \gg P^2M^6\]
on all of $P\mathcal{B}$ as well.

Using $\vert\mathbf{y}\vert \ll PM^{3.75}$ for $\mathbf{y} \in P\mathscr{B}$, we now have the approximation
\[\phi(\mathbf{y}+\mathbf{h})-\phi(\mathbf{y})=h_1 \cdot \frac{\partial \phi}{\partial x_1}(\mathbf{y})+\mathcal{O}\left(HP^2 \max_i \left\vert \frac{\partial \phi}{\partial x_i}\right\vert+h_1^2PM^{4.75}\right).\]
Note that $h_1 \le P$ implies that
\[h_1^2PM^{4.75} \ll h_1P^2M^{4.75}\]
and so we will have
\[\phi(\mathbf{y}+\mathbf{h})-\phi(\mathbf{y}) \gg \vert h_1\vert  \cdot P^2M^6\]
unless $\vert h_1\vert \ll H$. Indeed, unless also $\vert h_1\vert \ll \frac{(\log P)^2}{\kappa P^2M^6}$, this means that
\[\vert C(\mathbf{y}+\mathbf{h})-C(\mathbf{y})\vert \ge \frac{(\log P)^2}{\kappa}\]
and it is easy to see from \eqref{estimate1} and \eqref{estimate2} that the contribution to $M(\alpha,\kappa)$ of such $\mathbf{h}$ is $\mathcal{O}(1)$.
Here we have made use of the earlier assumption that $\frac{\partial C}{\partial x_i}$ and hence $\frac{\partial \phi}{\partial x_1}$ is maximal for $i=1$.

Hence we have shown that
\[M(\alpha,\kappa) \ll 1+\frac{P^{n-1}}{H^{n-1}} \sum_{\vert h_i\vert \ll H} \left\vert \sum_{\mathbf{y}} I(\mathbf{h},\mathbf{y})\right\vert\]
if we choose $\kappa \asymp \frac{(\log P)^2}{HP^2M^6}$.

Moreover, the range $\vert \beta-\alpha\vert \ge \kappa \log P$ in the definition of $I(\mathbf{h},\mathbf{y})$ clearly has a total contribution of $\mathcal{O}(1)$ to $M(\alpha,\kappa)$ so that we end up with the estimate
\[M(\alpha,\kappa) \ll 1+ \frac{P^{n-1}}{H^{n-1}} \sum_{\vert h_i\vert \ll H} \int_{\alpha-\kappa \log P}^{\alpha+\kappa \log P} \vert T(\mathbf{h},\beta)\vert d\beta,\]
where
\[T(\mathbf{h},\beta)=\sum_{\mathbf{y} \in \mathcal{R}(\mathbf{h})} e\left(\beta (\phi(\mathbf{y}+\mathbf{h})-\phi(\mathbf{y}))\right).\]

The same argument employed already several times now yields
\[\vert T(\mathbf{h},\beta)\vert^2 \ll P^{n+\varepsilon} N(\beta,P,\mathbf{h})\]
where
\[N(\beta,P,\mathbf{h})=\#\left\{\mathbf{d} \in \mathbb{Z}^n: \vert \mathbf{d}\vert \le 2 P, \left\|6\beta B_i(\mathbf{h},\mathbf{d})\right\|<\frac{1}{2P}\right\},\]
so that
\begin{equation} M(\alpha,\kappa) \ll 1+\frac{\kappa P^{\frac{3n}{2}-1+\varepsilon}}{H^{n-1}} \sum_{\vert h_i\vert \ll H} \max_{\beta \in I} N(\beta,P,\mathbf{h})^{\frac{1}{2}}\end{equation}
where $I=\{\beta: \vert \beta-\alpha\vert \le \kappa \log P\}$.

We next claim that 
\[\max_{\beta \in I} N(\beta,P,\mathbf{h}) \ll P^{\varepsilon} N(\alpha,P,\mathbf{h}).\]
Indeed, consider a vector $\mathbf{d}$ counted by $N(\beta,P,\mathbf{h})$. By our assumption, it satisfies $\vert\mathbf{d}\vert \ll P$ as well as $\|6\beta B_i(\mathbf{h},\mathbf{d})\| \ll \frac{1}{P}$ so that
\[\|6\alpha B_i(\mathbf{h},\mathbf{d})\| \ll \frac{1}{P}+\vert \beta-\alpha\vert \vert B_i(\mathbf{h},\mathbf{d})\vert \ll \frac{1}{P}+\kappa(\log P)MHP \ll \frac{1}{P}\]
by our choice of $\kappa$.

We conclude that
\[\max_{\beta} N(\beta,P,\mathbf{h}) \le \#\left\{\mathbf{d} \in \mathbb{Z}^n: \vert\mathbf{d}\vert \ll P, \left\|6\alpha B_i(\mathbf{h},\mathbf{d})\right\| \ll \frac{1}{P}\right\} \ll N(\alpha,P,\mathbf{h}),\]
where the last estimate is a consequence of Lemma \ref{lem.shrinkinglemma} with $Z \asymp 1$ sufficiently small.

We conclude that

\begin{equation}\label{Malphakappa} M(\alpha,\kappa) \ll 1+\frac{\kappa P^{\frac{3n}{2}-1+\varepsilon}}{H^{n-1}} \sum_{\vert h_i\vert \ll H} N(\alpha,P,\mathbf{h})^{\frac{1}{2}}.\end{equation}

We now write $\alpha=\frac{a}{q}+\theta$ in a preparation for applying Lemmas \ref{lem.shrinkinglemma} and \ref{lem.bootstrap}. Indeed, Lemma \ref{lem.shrinkinglemma} implies that
\[N(\alpha,P,\mathbf{h}) \ll Z^{-n} \#\{\mathbf{d} \in \mathbb{Z}^n: \vert \mathbf{d}\vert<ZP, \left\|6\alpha B_i(\mathbf{h},\mathbf{d})\right\| \ll \frac{Z}{P}\}.\]
Following Heath-Brown, we apply this with two different choices of $Z$. In the first application we choose $Z=Z_1$ sufficiently small to ensure $B_i(\mathbf{h},\mathbf{d})=0$ as before. In the second application however, we make a larger choice of $Z=Z_2$ which only forces that $q \mid B_i(\mathbf{h},\mathbf{d})$. We then have to consider the implications of this weaker result. It was observed in \cite{heath2007cubic} that only this new trick allows us to handle the case of $14$ variables. 

To apply Lemma \ref{lem.bootstrap} with $m=6B_i(\mathbf{h},\mathbf{d})$ we have to choose $X \asymp MH PZ$ and $P_1 \asymp \frac{P}{Z}$ so that in our application of Lemma \ref{geoz} we need to choose $Z \le 1$ satisfying
\[\vert \theta\vert \ll \frac{1}{MH PZq} \quad \text{ and } \quad Z \ll \frac{P}{q}\]
with sufficiently small implicit constants.
In the first application we should also have
\[ MH PZ \ll q \quad \text{ or } \quad Z \ll q\vert\theta\vert P.\]
Writing 
\begin{equation}\label{etadef}\eta=\vert \theta\vert+\frac{1}{P^2HM}\end{equation}
for convenience and assuming $q \sim R$, this means that we need to choose
\[Z_1 \asymp \min\left(R\eta P,\frac{1}{RHMP\eta}\right),\]
noting that this automatically implies $Z_1 \le 1$. Similarly, we should choose
\[Z_2 \asymp \min\left(1,\frac{1}{RHMP\eta}\right).\]

In the application with $Z=Z_1$, we thus find that
\begin{align*}
N(\alpha,P,\mathbf{h})&\ll Z_1^{-n} \#\{\mathbf{w} \in \mathbb{Z}^n: \vert\mathbf{w}\vert \ll P, B_i(\mathbf{h},\mathbf{d})=0\}\\
&\ll Z_1^{-n} (Z_1 P)^{n-r}\\
&\ll P^n\left((R\eta P^2)^{-r}+(RHM\eta)^r\right)
\end{align*}
where $r=r(\mathbf{h})$. The argument only works for $Z_1P \ge 1$ but the estimate is true in any case because of the trivial bound $N(\beta,P,\mathbf{h}) \ll P^n$.

On the other hand, in the application with $Z=Z_2$, we obtain
\[N(\alpha,P,\mathbf{h}) \ll Z_2^{-n} \#\{\mathbf{d} \in \mathbb{Z}^n: \vert\mathbf{d}\vert \ll Z_2P, q \mid B_i(\mathbf{h},\mathbf{d})\}.\]
To make this a useful estimate, we need to count vectors $\mathbf{d}$ with $q \mid B_i(\mathbf{h},\mathbf{d})$. Here we can copy the results from \cite{heath2007cubic}, but we need to introduce some notation. Recall that we have fixed a vector $\mathbf{h}$ with $r(\mathbf{h})=r$, meaning that the matrix $M(\mathbf{h})$ has rank $r$. We now distinguish primes $p$ according to whether $p$ divides all the $r \times r$ minors of $M(\mathbf{h})$. If it does, we say that $p$ is of type I and if it does not, we say that it is of type II. We then decompose $q=q_1q_2$ such that $q_1$ is a product of type I primes and $q_2$ a product of type II primes. The argument in \cite[p.\,218\,f.]{heath2007cubic} now shows that
\[\#\{\mathbf{d} \in \mathbb{Z}^n: \vert\mathbf{d}\vert \ll Z_2P, q \mid B_i(\mathbf{h},\mathbf{d})\} \ll \left(1+\frac{B}{q_2}\right)^rB^{n-r}\]
with $B=1+Z_2 P$. If $Z_2 P \gg 1$ so that $B \asymp Z_2 P$, this shows that
\begin{align*}
N(\alpha,P,\mathbf{h}) &\ll Z_2^{-n}\left(1+\frac{Z_2 P}{q_2}\right)^r(Z_2 P)^{n-r}\\
&=P^n\left(\frac{1}{q_2}+\frac{1}{Z_2 P}\right)^r\\
&\ll P^n\left(\frac{1}{q_2^r}+\frac{1}{P^r}+(RHM\eta)^r\right),
\end{align*}
but the intermediate and hence the final result is trivially true if $Z_2P \ll 1$.
Combining the results of the two applications, we obtain that
\[N(\alpha,P,\mathbf{h}) \ll P^n\left(\frac{1}{P^r}+(RHM\eta)^r+\min\left(\frac{1}{(R\eta P^2)^r},\frac{1}{q_2^r}\right)\right).\]
We now need to insert this into our estimate for $M(\alpha,\kappa)$ but we also want to introduce an average over $q$ to make use of the fact that $q_2$ is almost as large as $q$ most of the time.

Our object of study thus becomes
\begin{equation}
    \label{eq.defA}
    A(\theta,R,H,P):=\sum_{q \sim R} \sum_{(a;q)=1} \sum_{\vert h_i\vert \ll H}  N(\alpha,P,\mathbf{h})^{\frac{1}{2}},
\end{equation}
where we continue to write $\alpha=\frac{a}{q}+\theta$ and we remind the reader of our notation $q \sim R$ for the dyadic condition $R<q \le 2R$.

The argument above now leads to the bound
\[A(\theta,R,H,P) \ll R P^{\frac{n}{2}} \sum_{\vert h_i\vert \ll H} \sum_{q \sim R} \left[\frac{1}{P}+RHM\eta+\min\left(\frac{1}{R\eta P^2},\frac{1}{q_2}\right)\right]^{\frac{r(\mathbf{h})}{2}}.\]
We then need to estimate
\[V(\mathbf{h},R,\eta):=\sum_{q \sim R} \min\left(\frac{1}{R\eta P^2},\frac{1}{q_2}\right)^{r/2}\]
for $r=r(\mathbf{h})$. A double dyadic decomposition leads to
\begin{align*}
    V(\mathbf{h},R,\eta) &\ll P^{\varepsilon} \max_{S \le R} \sum_{q_1 \sim S} \sum_{q_2 \sim \frac{R}{S}} \min\left(\frac{1}{R\eta P^2}, \frac{S}{R}\right)^{r/2}\\
    &\ll P^{\varepsilon} \max_{S \le R} \frac{R}{S} \min\left(\frac{1}{R\eta P^2}, \frac{S}{R}\right)^{r/2} \#\{q_1 \le 2S\}.
\end{align*}
Now recall that $q_1$ only contains prime factors dividing a certain non-zero $r \times r$ determinant $M_0$ of $M(\mathbf{h})$. In particular, $M_0 \ll M^rH^r$. Applying Rankin's trick it now follows that
\[\#\{q_1 \le 2S\} \ll S^{\varepsilon} \sum_{q_1} q_1^{-\varepsilon}=S^{\varepsilon} \prod_{p \mid M_0} \frac{1}{1-p^{-\varepsilon}} \ll S^{\varepsilon} M_0^{\varepsilon} \ll M^{\varepsilon}\]
and hence
\[V(\mathbf{h},R,\eta) \ll M^{\varepsilon}\frac{R}{S} \min\left(\frac{1}{(R\eta P^2)^{\frac{r}{2}}},\left(\frac{S}{R}\right)^{\frac{r}{2}}\right).\]
Maximizing this function for $S$ we find that
\[V(\mathbf{h},R,\eta) \ll M^{\varepsilon} \frac{R}{(R\eta P^2)^{\frac{r}{2}}} \cdot \min(1,\eta P^2)^{e(r)},\]
where $e(0)=0, e(1)=\frac{1}{2}$ and $e(r)=1$ for $r \ge 2$. Assuming that $C$ is $\psi$-good and $H \le M^{\psi}$ it now follows that
\begin{align*}&A(\theta,R,H,P) \ll R^2 P^{\frac{n}{2}} \sum_{\vert h_i\vert \ll H} \left[\frac{1}{P^{\frac{r(\mathbf{h})}{2}}}+(RHM\eta)^{\frac{r(\mathbf{h})}{2}}+R^{-1}V(\mathbf{h},R,\eta)\right]\\
&\ll R^2 P^{\frac{n}{2}}M^{\varepsilon}  \sum_{\vert h_i\vert \ll H} \left[\frac{1}{P^{\frac{r(\mathbf{h})}{2}}}+(RHM\eta)^{\frac{r(\mathbf{h})}{2}}+\frac{1}{(R\eta P^2)^{\frac{r(\mathbf{h})}{2}}} \cdot \min(1,\eta P^2)^{e(r(\mathbf{h}))}\right]\\
&\ll R^2 P^{\frac{n}{2}}M^{\varepsilon} \sum_{r=0}^{14} H^{n-14+r}\left[\frac{1}{P^{\frac{r}{2}}}+(RHM\eta)^{\frac{r}{2}}+\frac{1}{(R\eta P^2)^{\frac{r}{2}}} \cdot \min(1,\eta P^2)^{e(r)})\right]\\
&\ll R^2 P^{\frac{n}{2}}H^nM^{\varepsilon} \left(\frac{1}{H^{14}}+\frac{1}{P^7}+(RHM\eta)^7+\frac{1}{(R\eta P^2)^7} \cdot \min(1,\eta P^2)\right).
\end{align*}
Finally, let us show that the term $\frac{1}{P^7}$ is negligble. Indeed, if $HRMP\eta \ge 1$, it is dominated by the third summand.. Otherwise, if $HRMP\eta \le 1$, we have $(R\eta P)^7 \le R\eta P \le \frac{1}{HM} \le \min(1,\eta P^2)$ on recalling that $\eta \ge \frac{1}{P^2HM}$ and hence $\frac{1}{P^7}$ is dominated by the last summand in that case.
In any case, we now conclude that
\begin{equation}
\label{eq.boundA}
A(\theta,R,H,P) \ll R^2 P^{\frac{n}{2}}H^nM^{\varepsilon} \left(\frac{1}{H^{14}}+(RHM\eta)^7+\frac{1}{(R\eta P^2)^7} \cdot \min(1,\eta P^2)\right).
\end{equation}

\section{Intermezzo: The singular series}

We are now ready to use the bounds for $S(q,a)$ in order to bound the singular series $\mathfrak{S}(P_0)$. However, we first require another ingredient which is a lower bound on the individual $p$-adic densities. The non-vanishing of these is a consequence of the existence of a non-singular $p$-adic solution and Hensel's Lemma. To get a uniform lower bound, we require a quantitative result on how singular such a solution is.

This is typically described in terms of the invariant $\Delta(C)$ which is defined as the greatest common factor of all $n \times n$-minors of the $n \times \binom{n+1}{2}$-matrix $(c_{ijk})_{i,(j,k)}$. In particular, $\Delta(C) \ne 0$ whenever $C$ is non-degenerate. We also note that if $C$ is degenerate modulo $q$, then $q \mid \Delta(C)$.

We note that sometimes the invariant we called $\Delta(C)$ is also denoted as $h(C)$, but this already has a different meaning in our work.

We also define $\Delta(\phi)$ to be $\Delta(\widetilde{\phi})$ where $\widetilde{\phi}$ is the homogenized version of $\phi$, i.e. a cubic form in $n+1$ variables. Note that $\Delta(C) \mid \Delta(\phi)$.

We will throughout work with the following consequence of Hensel's Lemma:

\begin{lemma}
If $\phi(\mathbf{x}) \equiv 0 \pmod{p^{2\ell-1}}$ and $p^{\ell} \nmid \nabla \phi(\mathbf{x})$, then $x$ lifts to a non-singular $p$-adic solution.
\end{lemma}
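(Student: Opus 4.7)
The plan is to reduce to a one-variable Hensel's Lemma by restricting $\phi$ to a well-chosen line through $\mathbf{x}$. By hypothesis $p^{\ell}$ does not divide every coordinate of $\nabla\phi(\mathbf{x})$, so we can pick an index $i$ such that $v := v_p\!\left(\frac{\partial \phi}{\partial x_i}(\mathbf{x})\right) \leq \ell - 1$. I would then look at the polynomial $g(t) := \phi(\mathbf{x} + t\mathbf{e}_i) \in \mathbb{Z}_p[t]$, for which $g(0) = \phi(\mathbf{x})$ and $g'(0) = \frac{\partial \phi}{\partial x_i}(\mathbf{x})$.

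The key observation is that the assumptions translate into the Hensel inequality
\[
v_p\bigl(g(0)\bigr) \geq 2\ell - 1 > 2v = 2 v_p\bigl(g'(0)\bigr).
\]
Hence strong Hensel applies to $g$ and produces a unique $t \in \mathbb{Z}_p$ with $g(t) = 0$ satisfying $v_p(t) \geq v_p(g(0)) - v_p(g'(0)) \geq (2\ell - 1) - (\ell - 1) = \ell$. Setting $\mathbf{y} := \mathbf{x} + t\mathbf{e}_i \in \mathbb{Z}_p^n$ gives a genuine $p$-adic zero of $\phi$.

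It remains to verify non-singularity of $\mathbf{y}$. Since $t \in p^{\ell} \mathbb{Z}_p$ and the partial derivatives of $\phi$ are polynomials with integer coefficients, each component of $\nabla\phi$ satisfies $\nabla\phi(\mathbf{y}) \equiv \nabla\phi(\mathbf{x}) \pmod{p^{\ell}}$ in $\mathbb{Z}_p^n$. By hypothesis the right-hand side is not congruent to $\mathbf{0}$ modulo $p^{\ell}$, so neither is $\nabla\phi(\mathbf{y})$, which in particular forces $\nabla\phi(\mathbf{y}) \neq \mathbf{0}$ in $\mathbb{Q}_p^n$. Thus $\mathbf{y}$ is a non-singular $p$-adic point on the hypersurface $\phi = 0$.

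There is no real obstacle here: the argument is a textbook Hensel lift, with the only bookkeeping being to ensure that the perturbation $t$ is of $p$-adic valuation at least $\ell$, which is precisely what propagates the non-vanishing of $\nabla\phi$ modulo $p^{\ell}$ from $\mathbf{x}$ to $\mathbf{y}$. The balance $2\ell - 1$ (rather than $2\ell$) in the hypothesis is exactly what makes the strict inequality $v_p(g(0)) > 2v_p(g'(0))$ work in the borderline case $v = \ell - 1$.
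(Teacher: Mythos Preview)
Your argument is correct. The paper does not actually prove this lemma; it merely states it as a standard consequence of Hensel's Lemma and uses it as a black box throughout. Your reduction to the one-variable strong Hensel's Lemma along the coordinate direction $\mathbf{e}_i$ is exactly the expected proof, and your bookkeeping is accurate: the sharp Hensel bound $v_p(t) \ge v_p(g(0)) - v_p(g'(0)) \ge (2\ell-1)-(\ell-1) = \ell$ is what guarantees $\nabla\phi(\mathbf{y}) \equiv \nabla\phi(\mathbf{x}) \pmod{p^\ell}$ and hence non-singularity of the lift.
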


\medskip

In the homogeneous case, Davenport \cite[Lemma 18.7]{davenport_book} established the following:
\begin{lemma}
\label{lem.hom_non_singular_padic}
If $C \in \mathbb{Z}[x_1,\dots,x_n]$ is a non-degenerate cubic form in $n \ge 10$ variables, then for each prime $p$ there is a $p$-adic solution $\mathbf{x}$ to $C(\mathbf{x})=0$ such that $p^{\ell} \nmid \nabla C(\mathbf{x})$ for
\[\ell=\ell_C(p):= 3 \cdot \left\lfloor \frac{v_p(\Delta(C))}{n-9}\right\rfloor+3.\]
\end{lemma}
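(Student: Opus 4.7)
The plan is to proceed by strong induction on the parameter $k := \lfloor v_p(\Delta(C))/(n-9) \rfloor$, so that the target becomes a $p$-adic zero $\mathbf{x}$ of $C$ with $p^{3k+3} \nmid \nabla C(\mathbf{x})$. This matches the shape of $\ell_C(p) = 3k+3$.

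For the base case $k=0$ (that is, $v_p(\Delta(C)) \le n-10$), one needs only a $p$-adic zero with $p^3 \nmid \nabla C$. Chevalley--Warning applied to $C \bmod p^j$ produces roughly $p^{j(n-3)}$ solutions in a complete residue system, while the ``very singular'' solutions -- those where all partial derivatives have $p$-valuation at least $3$ -- live on a proper subvariety whose $p$-adic density is controlled by $v_p(\Delta(C))$, since the common vanishing of many minors of the Hessian forces a large chunk of the total valuation encoded in $\Delta(C)$. When $v_p(\Delta(C))$ is small relative to $n-9$, the singular solutions cannot fill the full solution set, so some solution modulo $p^5$ has $p^3 \nmid \nabla C(\mathbf{x})$, and Hensel's Lemma in the form recalled in the excerpt lifts it to a $p$-adic zero of the required type.

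For the inductive step, assume the result for all non-degenerate integer cubic forms in $n$ variables with strictly smaller induction parameter. Given $C$ with parameter $k \ge 1$, suppose for contradiction that every $p$-adic zero $\mathbf{x}$ of $C$ satisfies $p^{3k+3} \mid \nabla C(\mathbf{x})$. The crux is to convert this extreme singularity into a $\mathbb{Z}_p$-linear change of variables $\mathbf{x} = M\mathbf{y}$ with $M \in M_n(\mathbb{Z}_p)$ together with a factorization
\[C(M\mathbf{y}) = p^3\, C'(\mathbf{y})\]
for some non-degenerate integer cubic form $C'$. Tracking the transformation law of the $n \times n$ minors that define $\Delta$ under both the scaling $C \mapsto p^{-3}C$ and the linear substitution by $M$ yields
\[v_p(\Delta(C')) \le v_p(\Delta(C)) - (n-9),\]
so the induction parameter for $C'$ drops by at least one. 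The inductive hypothesis applied to $C'$ gives a $p$-adic zero $\mathbf{y}$ with $p^{3k} \nmid \nabla C'(\mathbf{y})$, and pulling back via $\nabla C(M\mathbf{y}) = p^3 (M^{-1})^T \nabla C'(\mathbf{y})$ produces a $p$-adic zero of $C$ with $p^{3k+3} \nmid \nabla C(\mathbf{x})$, contradicting the assumption.

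The main obstacle is the construction of $M$ in the inductive step. The universal divisibility hypothesis forces, after suitable coordinates, that $C$ modulo $p$ vanishes identically on an $(n-9)$-dimensional linear subspace of $\mathbb{F}_p^n$; splitting the coordinates as $9 + (n-9)$ accordingly and absorbing three powers of $p$ into the rescaling is Davenport's classical $p$-adic minimization procedure. The role of the hypothesis $n \ge 10$ is precisely to guarantee that the complementary subspace has positive dimension $n-9 \ge 1$, so that the reduction strictly decreases the induction parameter and the recursion eventually terminates at the base case.
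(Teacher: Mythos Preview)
The paper does not supply its own proof of this lemma; it is quoted verbatim as Lemma~18.7 from Davenport's book. Your overall plan---an iterated $p$-adic minimisation in Davenport's style, with $k=\lfloor v_p(\Delta(C))/(n-9)\rfloor$ counting the rounds---is indeed the route taken in the cited source, so in spirit you are aligned with it.

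That said, what you have written is an outline with genuine gaps rather than a proof. The base case is not correct as stated: Chevalley--Warning is a theorem over the field $\mathbb{F}_p$, not over $\mathbb{Z}/p^j\mathbb{Z}$, so the assertion that it ``produces roughly $p^{j(n-3)}$ solutions'' modulo $p^j$ has no content, and the subsequent comparison with the density of ``very singular'' points is a heuristic, not an argument. The actual mechanism (in Davenport) is the dichotomy that a cubic form over $\mathbb{F}_p$ in $n\ge 10$ variables either possesses a non-singular $\mathbb{F}_p$-zero---which Hensel lifts with $\ell=1$---or is $\mathbb{F}_p$-equivalent to a form in at most nine variables, which is precisely the configuration that triggers a reduction step; the base case thus does not require any counting modulo $p^j$.

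In the inductive step you correctly isolate the crux---constructing $M\in M_n(\mathbb{Z}_p)$ with $C(M\mathbf{y})=p^{3}C'(\mathbf{y})$ and $v_p(\Delta(C'))\le v_p(\Delta(C))-(n-9)$---but you neither construct $M$ nor verify either property; you explicitly flag this as ``the main obstacle'' and leave it open. You also need to be careful with the pullback: since $M\notin GL_n(\mathbb{Z}_p)$ (otherwise $\Delta$ would not drop), the matrix $(M^{-1})^T$ has entries of negative $p$-valuation, and deducing $p^{3k+3}\nmid\nabla C(M\mathbf{y})$ from $p^{3k}\nmid\nabla C'(\mathbf{y})$ via $\nabla C(M\mathbf{y})=p^{3}(M^{-1})^T\nabla C'(\mathbf{y})$ is not automatic---it requires knowing the exact shape of $M$. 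These computations are precisely what Davenport carries out; without them the proposal remains a plan rather than a proof.
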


In the inhomogeneous case, as observed by Davenport and Lewis~\cite{dl64}, the Necessary Congruence Condition is in general not enough to deduce the existence of a non-singular $p$-adic solution to $\phi(\mathbf{x})=0$. A counterexample in the $14$ variables $x_i$ for $1 \le i \le 4$ and $x_{i,j}$ for $1 \le i \le j \le 4$ is given by
\begin{align*}
    &\phi(\mathbf{x})=x_1^2-Nx_2^2+p(x_3^2-Nx_4^2)+p^2\left(\sum_{1 \le i \le j \le 4} x_{i,j} x_ix_j\right)
\end{align*}
for a quadratic non-residue $N$ modulo $p$. However, they managed to prove that the desired implication holds true if $n \ge 15$ and $\phi$ is non-degenerate. A quantitative version of their argument leads to the following result:

\begin{lemma}
\label{lem.inhom_nonsing_padic}
If $\phi \in \mathbb{Z}[x_1,\dots,x_n]$ is a non-degenerate cubic polynomial in $n \ge 15$ variables, then for each prime $p$ there is a $p$-adic solution $\mathbf{x}$ to $\phi(\mathbf{x})=0$ such that $p^{\ell} \nmid \nabla \phi(\mathbf{x})$ for
\[\ell = \ell_{\phi}(p):=\begin{cases} 98 & p \nmid \Delta(\phi)\\ 144v_p(\Delta(\phi))+2 & p \mid \Delta(\phi) \end{cases}.\]
\end{lemma}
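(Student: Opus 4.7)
The plan is to quantify the qualitative argument of Davenport and Lewis \cite{dl64}, carefully tracking the $p$-adic exponents through each step of their reduction. I would proceed by induction on $v_p(\Delta(\phi))$, the base case being $p \nmid \Delta(\phi)$ and the inductive step $p \mid \Delta(\phi)$.

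For the base case, since $\Delta(\phi)$ is a unit modulo $p$, the cubic part $C$ is non-degenerate modulo $p$. For $n \ge 15$ this forces the existence of an $\mathbb{F}_p$-point on the cubic hypersurface $C=0$ at which some partial derivative does not vanish modulo $p$; one may invoke, for instance, Chevalley--Warning combined with an analysis of the singular locus, or Heath-Brown's \cite{heath2007cubic} bound on the $h$-invariant over $\mathbb{F}_p$. An application of the Hensel lemma preceding the statement then lifts this point to a non-singular $p$-adic zero of $\phi$ (the lower-order terms $Q,L,N$ being harmless at this stage), giving the claimed $\ell = 98$. The specific numerical value is dictated by the most restrictive situation at small primes $p = 2, 3$, where Chevalley--Warning is weakest and more than a single Hensel step may be required.

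For the inductive step, suppose $p \mid \Delta(\phi)$. A unimodular substitution over $\mathbb{Z}_p$ brings $\phi$ into a normal form in which the ``degenerate'' directions appear only multiplied by $p$; schematically $\phi(\mathbf{x}) \equiv \phi_1(x_1,\dots,x_r) \pmod{p}$ with $r < n$. Substituting $x_i \mapsto p y_i$ for $i > r$ and factoring out an appropriate power of $p$ produces a new cubic polynomial $\phi'$, still non-degenerate and in $n$ variables, with $v_p(\Delta(\phi'))$ strictly smaller than $v_p(\Delta(\phi))$. The hypothesis $n \ge 15$ is used crucially here to ensure that the reduced polynomial still has the non-degeneracy needed to continue the iteration; the counterexample displayed in the excerpt shows exactly how the reduction can stall for $n = 14$. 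Applying the induction hypothesis to $\phi'$ and pulling back through the substitution yields a $p$-adic zero of $\phi$ with a controlled, bounded additive penalty in the exponent $\ell$.

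The main obstacle is the arithmetic bookkeeping: verifying that each reduction step decreases $v_p(\Delta)$ by at least one while increasing $\ell$ additively by a bounded amount, so that after $O(v_p(\Delta(\phi)))$ iterations the exponent grows at most linearly in $v_p(\Delta(\phi))$. The precise constant $144$ emerges from the worst case of the iteration, in which a single substitution $x_i \mapsto p y_i$ raises the $p$-valuation of $\nabla \phi$ and the reduction in $v_p(\Delta)$ is only by one per step. Keeping $n \ge 15$ and maintaining non-degeneracy stable through the entire chain of substitutions, while simultaneously accumulating the correct linear bound on $\ell$, is the delicate part of the argument, and it is precisely in this regime that the Davenport--Lewis strategy barely succeeds.
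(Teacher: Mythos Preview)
Your sketch omits the one ingredient the argument cannot do without: the Necessary Congruence Condition. The lemma is stated (and proved) under the standing hypothesis that $\phi$ satisfies the NCC; without it the conclusion is simply false (e.g.\ $\phi=2C_1+1$ has no $2$-adic zero). Your base case proposes to find a smooth $\mathbb{F}_p$-point on the \emph{cubic part} $C=0$ and then lift, but a zero of $C$ modulo $p$ is not a zero of $\phi$ modulo $p$: the constant term $N$ and the linear and quadratic parts do not disappear, and no amount of Hensel lifting produces a zero of $\phi$ from a zero of $C$ unless one already has $\phi$ small modulo some power of $p$. The paper's proof begins precisely here: it invokes the NCC to obtain a solution of $\phi\equiv 0\pmod{p^{96k+32m+35}}$, translates this solution to the origin, and only then performs a structural case analysis.

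The overall architecture is also different from what you propose. There is no induction on $v_p(\Delta(\phi))$ and no descent $\phi\mapsto\phi'$ with smaller $v_p(\Delta)$. Instead, after translating the congruence solution to the origin, the proof works through a finite case distinction: either the linear part $L$ is not too divisible by $p$ (and one lifts the origin directly), or $L$ is killed and one looks at $Q$; if $Q$ has rank $\ge 5$ modulo a suitable $p^M$ one uses a smooth zero of $Q$, and if $Q$ has rank $\le 4$ one writes $\phi$ in the shape $\sum y_iR_i+R+\Gamma$ and splits further according to whether the residual cubic $\Gamma$ vanishes to high order. The bound $n\ge 15$ enters only in the very last subcase, via the count $\binom{r+1}{2}\le 10<n-r$ forcing a degeneracy contradiction. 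The constant $98$ is not an artefact of small primes or of Chevalley--Warning; it is exactly $48k+16m+18$ evaluated at $k=0$, $m=5$, and the $144$ arises from the bound $m\le 6k-1$ for $k\ge 1$ in the same formula.
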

While the result is probably not optimal and the proof has a certain ad-hoc structure, we note that the counterexample for $14$ variables naturally arises from the structure of the proof. We will momentarily see that one could prove a superficially stronger bound, but in the critical case $v_p(\Delta(\phi))=1$ we do not lose anything.

\begin{proof}
Let $k=v_p(\Delta(\phi))$ and $m=\max(6\left\lfloor \frac{k}{n-9}\right\rfloor+5, k+1)$. We will prove the result for $\ell=48k+16m+18$. Note that for $k=0$, we have $m=5$ so that $\ell=98$. For $k \ge 1$, we have $m \le 6k-1$ and hence $\ell \le 144k+2$ as desired. Here, we used that $\left\lfloor \frac{k}{n-9}\right\rfloor \le k-1$ for $k \ge 1$ and $n \ge 11$.

\medskip

First of all, using the congruence condition, we can find a solution modulo $p^{96k+32m+35}$ and after translating the variables appropriately, we may assume that this solution is given by $(0,0,\dots,0)$ so that
\[\phi(\mathbf{x}) \equiv C(\mathbf{x})+Q(\mathbf{x})+L(\mathbf{x}) \pmod {p^{96k+32m+35}}.\]
If $L$ does not vanish identically modulo $p^{48k+16m+18}$, this solution $\mathbf{x}=(0,0,\dots,0)$ satisfies $p^{48k+16m+18} \nmid \nabla \phi(\mathbf{x})$ and $p^{2(48k+16m+18)-1} \mid \phi(\mathbf{x})$ so that it lifts to a $p$-adic solution of the desired shape by Hensel's Lemma.

Otherwise, we may assume that 
\[\phi(\mathbf{x}) \equiv C(\mathbf{x})+Q(\mathbf{x}) \pmod {p^{48k+16m+18}}.\]
If $Q$ vanishes identically modulo $p^{m}$, then $C$ is non-degenerate modulo $p^{m}$ since otherwise $p^{k+1} \mid \Delta(C)$ by construction of $m$, contradicting the definition of $k$.

But if $C$ is non-degenerate modulo $p^m$, by Lemma \ref{lem.hom_non_singular_padic}, there is a solution of $C(\mathbf{x})=0$ with $v_p(\nabla C(\mathbf{x})) \le 3\left\lfloor \frac{k}{n-9}\right\rfloor+3$ and since $m \ge 6\left\lfloor \frac{k}{n-9}\right\rfloor+5$, this lifts to a non-singular solution by Hensel's Lemma.

From now on we assume that $Q$ does not vanish identically modulo $p^m$.

\textbf{First case:} $Q$ has rank at least five modulo $p^{M}$ where $M=12k+4m+5$. Then we can find a non-singular solution $\beta$ of $Q(\mathbf{\beta})=0$ with $p^{M} \nmid \nabla Q(\mathbf{\mathbf{\beta}})$. Rescaling by $p^{2M-1}$, we have $p^{6M-3} \mid \phi(p^{2M-1}\mathbf{\beta})$ and $p^{3M-1} \nmid \nabla \phi(p^{2m-1}\mathbf{\beta})$ and we obtain a non-singular solution with $p^{3M-1}=p^{36k+12m+15} \nmid \nabla \phi(\mathbf{x})$. Note that here we used that $4M-2=48k+16m+18$ so that $p^{6M-3} \mid L(p^{2M-1}\beta)$.

\textbf{Second case:} $Q$ has rank $1 \le r \le 4$ modulo $p^M$. Hence $\phi(\mathbf{x})$ is equivalent to a form $\psi(\mathbf{y})$ with
\[\psi(\mathbf{y}) \equiv y_1R_1(\mathbf{y})+\dots+y_rR_r(\mathbf{y})+R(y_1,\dots,y_r)+\Gamma(y_{r+1},\dots,y_n) \pmod{p^{M}}\]
for some cubic form $\Gamma$ and some quadratic forms $R,R_1,\dots,R_r$.

\textbf{First subcase:} $\Gamma$ does not vanish identically modulo $p^{2k+1}$. We then choose $\delta_1,\dots,\delta_r$ such that $p^{m} \nmid R(\delta_1,\dots,\delta_r)$ and $\delta_{r+1},\dots,\delta_n$ such that $p^{2k+1} \nmid \Gamma(\delta_{r+1},\dots,\delta_n)$. Let $\rho \le 2k$ be chosen so that $p^{\rho} \| \Gamma(\delta_{r+1},\dots,\delta_n)$.

We then choose $\varepsilon=(p^{\rho+1}\delta_1,\dots,p^{\rho+1}\delta_r,\delta_{r+1},\dots,\delta_n)$ so that $p^{\rho} \| y_1R_1+\dots+y_rR_r+\Gamma$ if we insert $\varepsilon$.

We can thus find a $p$-adic integer $\mu$ such that
\[\mu(y_1R_1+\dots+y_rR_r+\Gamma)+R=0,\]
still everything evaluated at $\varepsilon$. Choosing $y=\mu \varepsilon$, we then have $p^M \mid \psi(\mathbf{y})$.

Moreover, by Euler's identity we have that
\[\sum_j \varepsilon_j \psi^{(j)}(\mu \varepsilon)\equiv -\mu \cdot R(\varepsilon) \pmod{p^M}\]
and since 
\[v_p(\mu \cdot R(\varepsilon)) \le 2v_p(R(\varepsilon))-\rho \le 3\rho+4+2v_p(R(\delta_1,\dots,\delta_r))\]
\[\le 2m+3\rho+2 \le 2m+6k+2,\]
we have that one of the $\psi^{(j)}$ is divisible at most by $2m+6k+2$ so that we have a solution with $\ell=2m+6k+3$ modulo $p^M$ lifting by Hensel since $M= 2\ell-1$.

\textbf{Second subcase:} $\Gamma$ vanishes identically modulo $p^{2k+1}$. Then we can write
\[\psi(\mathbf{y}) \equiv y_1R_1(\mathbf{y})+\dots+y_rR_r(\mathbf{y})+R(y_1,\dots,y_r) \pmod{p^{2k+1}}.\]
Then we find a solution of the shape $(0,\dots,0,y_{r+1},\dots,y_n)$ with $p^{k+1} \nmid \nabla \psi$ (which then again lifts by Hensel) unless all the $R_i$ vanish modulo $p^{k+1}$ at these points. But this means that the variables $y_{r+1},\dots,y_n$ appear at most linearly in all of the $R_i$ so that we can write
\[\psi(\mathbf{y}) \equiv y_{r+1}S_{r+1}(y_1,\dots,y_r)+\dots+y_nS_n(y_1,\dots,y_r)+R(y_1,\dots,y_r) \pmod{p^{k+1}}.\]
But then, finally, as there are only at most $\binom{r+1}{2} \le 10$ linearly independent quadratic monomials in $y_1,\dots,y_r$ and $n-r \ge 11$, the $S_i$ can not be linearly independent and so $\psi$ and hence $\phi$ must be degenerate modulo $p^{k+1}$ so that $p^{k+1} \mid \Delta(\phi)$ contradicting the definition of $k$.
\end{proof}

We are now equipped with everything needed for a lower bound of the singular series.

To this end, for each prime $p$ let
\[k_C(p)=\begin{cases} \max_{t \in \mathbb{N}: p^t \le P_0} \{t\}, & p \nmid \Delta(C),\\\max_{t \in \mathbb{N}: p^t \le P_0} \{t,2\ell_C(p)-1\}, & p \mid \Delta(C) \end{cases}\]
in the homogeneous case and
\[k_{\phi}(p)=\begin{cases} \max_{t \in \mathbb{N}: p^t \le P_0} \{t\}, & p \nmid \Delta(\phi),\\\max_{t \in \mathbb{N}: p^t \le P_0} \{t,2\ell_{\phi}(p)-1\}, & p \mid \Delta(\phi) \end{cases}\]
in the inhomogeneous case. We then define the truncated Euler product
\[S_{\phi}(P_0)=\prod_{p \le P_0} \sum_{i=0}^{k_{\phi}(p)} A(p^i),\]
where
\[A(q)=\sum_{(a;q)=1} \frac{S(q,a)}{q^n}\]
and we recall the classical fact that
\[\sum_{i=0}^k A(p^i)=p^{-k(n-1)} \rho(p^k),\]
where $\rho(p^k)$ denotes the number of solutions of $\phi(\mathbf{x}) \equiv 0 \pmod{p^k}$.

Similarly, we define
\[S_C(P_0)=\prod_{p \le P_0} \sum_{i=0}^{k_C(p)} A(p^i).\]
We will first establish a lower bound for the truncated Euler product $S(P_0)$ and then estimate the difference to the truncated singular series $\mathfrak{S}(P_0)$.

We first deal with the primes not dividing $\Delta$. The key ingredient here is the following bound which is Lemma 9 in \cite{bde12}.

\begin{lemma}
\label{lem.beitrag_pnondeg}
Let $C$ be a cubic form in $n \ge 10$ variables. Then for any $p \gg 1$ with $p \nmid \Delta(C)$ and any $k \ge 1$, we have
\[\rho^*(p^k) \ge p^{k(n-1)} \left(1+\mathcal{O}\left(\frac{1}{p}\right)\right),\]
where $\rho^*(p^k)$ denotes the number of non-singular solutions of $\phi(\mathbf{x}) \equiv 0 \pmod{p^k}$.
\end{lemma}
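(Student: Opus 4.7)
The plan is to combine Hensel's Lemma with a Deligne-Weil estimate for the associated cubic exponential sum. The key observation is that any non-singular $\mathbb{F}_p$-solution of $\phi\equiv 0\pmod p$ lifts uniquely to $p^{(k-1)(n-1)}$ non-singular solutions modulo $p^k$, so $\rho^*(p^k)=p^{(k-1)(n-1)}\rho^*(p)$, and it suffices to establish $\rho^*(p)\ge p^{n-1}(1+\mathcal{O}(1/p))$ for $p\gg_n 1$.

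To count all mod-$p$ solutions, I would use orthogonality to write
\[
\rho(p)=p^{n-1}+\frac{1}{p}\sum_{a=1}^{p-1}S(p,a),\qquad S(p,a)=\sum_{\mathbf{x}\pmod p}e_p(a\phi(\mathbf{x})).
\]
Since $p\nmid\Delta(C)$, for each $a\not\equiv 0\pmod p$ the cubic part $aC$ of $a\phi$ cuts out a smooth projective hypersurface over $\mathbb{F}_p$. Deligne's bound for exponential sums of polynomials with non-singular leading part then yields $\vert S(p,a)\vert \ll_n p^{n/2}$, and consequently $\rho(p)=p^{n-1}+\mathcal{O}_n(p^{n/2})$. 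For $n\ge 10$ the relative error here is $\mathcal{O}_n(p^{-(n-2)/2})$, which is comfortably $\ll 1/p$ once $p$ is sufficiently large in terms of $n$; for the finitely many excluded small primes, the assumption $p\gg 1$ in the lemma absorbs them into the implicit constant.

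It remains to pass from $\rho(p)$ to $\rho^*(p)$ by discarding singular $\mathbb{F}_p$-solutions. These satisfy $\nabla\phi\equiv\mathbf{0}\pmod p$, a system of $n$ quadratic equations whose leading parts $\nabla C$ share only the trivial projective common zero (because $C$ is non-singular mod $p$). By B\'ezout's theorem the full system has $\mathcal{O}_n(1)$ common solutions for $p\gg_n 1$, a contribution readily absorbed into the previous error. We obtain $\rho^*(p)=p^{n-1}(1+\mathcal{O}(1/p))$, and the Hensel-lifting identity of the first paragraph then upgrades this to $\rho^*(p^k)\ge p^{k(n-1)}(1+\mathcal{O}(1/p))$, as required. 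The main obstacle is the exponential sum bound $\vert S(p,a)\vert\ll_n p^{n/2}$, which rests on Deligne's resolution of the Weil conjectures for smooth projective hypersurfaces; the remaining ingredients (orthogonality, Hensel, and B\'ezout) are routine.
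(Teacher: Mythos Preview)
The paper does not give its own proof of this lemma; it is simply quoted from Browning--Dietmann--Elliott (Lemma~9 there). Your Hensel-lifting reduction $\rho^*(p^k)=p^{(k-1)(n-1)}\rho^*(p)$ is correct and is the natural first step. The gap lies in the next move.

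You assert that $p\nmid\Delta(C)$ forces $C$ to cut out a \emph{smooth} projective hypersurface over $\mathbb{F}_p$, and you use this twice: once to invoke Deligne's bound $|S(p,a)|\ll_n p^{n/2}$, and once to bound the singular locus via B\'ezout. But this implication is false. In the paper's definition, $\Delta(C)$ is the gcd of the $n\times n$ minors of the coefficient matrix $(c_{ijk})_{i,(j,k)}$, and $p\nmid\Delta(C)$ is equivalent to $C$ being \emph{non-degenerate} modulo $p$ (not equivalent to a form in fewer variables), which is strictly weaker than smoothness. A concrete example is $C=x_1x_2x_3+x_4^3+\cdots+x_n^3$ over $\mathbb{F}_p$ with $p>3$: the partial derivatives $x_2x_3,\,x_1x_3,\,x_1x_2,\,3x_4^2,\dots,3x_n^2$ are linearly independent quadratic forms, so $p\nmid\Delta(C)$, yet $[1:0:\cdots:0]$ is a singular point of the projective hypersurface. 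In such cases Deligne's theorem in the form you cite (smooth leading form) does not apply, and the singular locus of $\{C=0\}$ over $\mathbb{F}_p$ can be positive-dimensional, so your B\'ezout count of $O_n(1)$ singular solutions also fails.

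To repair the argument one must work under the weaker non-degeneracy hypothesis alone; the treatment in the cited reference does not go through Deligne for a smooth hypersurface but uses a more hands-on estimate that only needs $C$ to genuinely involve all $n\ge 10$ variables modulo $p$.
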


In the homogeneous case, this immediately shows that the contribution from the primes not dividing $\Delta(C)$ to $S_C(P_0)$ is $\gg P_0^{-\varepsilon}$.

In the inhomogeneous case, we note that
\[\rho_{\phi}^*(p)=\frac{\rho_{\widetilde{\phi}}^*(p)-\rho_C^*(p)}{p-1} \ge \frac{p^n+\mathcal{O}(p^{n-1})}{p-1} \ge p^{(n-1)} \left(1+\mathcal{O}\left(\frac{1}{p}\right)\right)\]
whenever $p \nmid \Delta(\phi)$ by applying Lemma \ref{lem.beitrag_pnondeg} to the cubic form $\widetilde{\phi}$. Here, we used that $\rho_C^*(p) \ll p^{n-1}$ for any cubic form $C$. This follows from $\rho_C^*(p) \le \rho_C(p) \ll p^{n-1}$ unless $C$ vanishes modulo $p$, but in that latter case we have $\rho_C^*(p)=0$.

We thus obtain the same bound as in Lemma \ref{lem.beitrag_pnondeg} for $\rho^*(p^k)$ by Hensel's Lemma and thus deduce that the contribution from the primes not dividing $\Delta(\phi)$ to $S_{\phi}(P_0)$ is also $\gg P_0^{-\varepsilon}$.

Note that in both cases the contribution from the primes $p \ll 1$ not dividing $\Delta$ is clearly $\gg 1$.

\medskip

We now need to deal with the primes $p \mid \Delta$.

In the inhomogeneous case, we conclude from Lemma \ref{lem.inhom_nonsing_padic} that
\[\rho(p^{k(p)}) \gg p^{k(p)(n-1)-\ell(p)(n-1)},\]
so that the contribution to $S(P_0)$ from the primes dividing $\Delta(\phi)$ is
\[\gg M^{-\varepsilon}\prod_{p \mid \Delta(\phi)} p^{-(n-1)\ell_{\phi}(p)} \gg M^{-\varepsilon}\prod_{p \mid \Delta(\phi)} p^{-292(n-1)v_p(\Delta(\phi))}\gg M^{-292(n^2-1)-\varepsilon},\]
using that $\Delta(\phi) \ll M^{n+1}$.

In the homogeneous case, we can do a bit better using the ideas from \cite{bde12}. We first consider the case that modulo $p$ the form $C$ is not equivalent to one in less than four variables.

In that case, by Lemma 10 from \cite{bde12}, we have
\[\rho_C^*(p^k) \ge p^{k(n-1)} \left(1+\mathcal{O}\left(p^{-1/2}\right)\right).\]

We thus conclude that primes $p \mid \Delta$ such that the order of $\widetilde{\phi}$ resp. $C$ modulo $p$ is at least four, have
\[\rho(p^{k(p)}) \gg p^{k(p)(n-1)},\]
at least for $p \gg 1$.

Finally, we need to deal with the case where the order modulo $p$ is $t \le 3$.

Hence, $C$ is equivalent to a cubic form $C_1(x_1,\dots,x_t)$ modulo $p$. If $C_1$ has a non-singular zero, then immediately $\rho_C^*(p) \ge p^{n-t} \ge p^{n-3}$ as we can vary $x_{t+1},\dots,x_n$ arbitrarily. 
Otherwise, if w.l.o.g. $(1,0,\dots,0)$ is a singular zero modulo $p$, then
\[C_1(x_1,\dots,x_t) \equiv x_1Q(x_2,\dots,x_t)+C_2(x_2,\dots,x_t) \pmod{p}.\]
Here, $Q$ cannot vanish identically modulo $p$ as otherwise the order would be at most $t-1$. We can thus choose $x_2,\dots,x_t$ such that $p \nmid Q$ and then solve the congruence for $x_1$ to obtain a non-singular solution and conclude as above.

Finally, we need to consider the case where $C_1$ does not have a non-trivial zero modulo $p$, i.e all the roots have all variables divisible by $p$.

But then if we define
\[C'(X_1,\dots,X_n)=p^{-1}C(pX_1,\dots,pX_t,X_{t+1},\dots,X_n),\]
we have $\rho_C(p^k)=p^{n-t} \rho_{C'}(p^{k-1})$. We can iterate this argument and after at most $\ell_C(p)-1$ steps we will end up with a cubic form that has a non-singular solution modulo $p$ which we can then treat as above.

Since we lose a factor of at most $p^2$ in each step ($p^{n-3}$ instead of $p^{n-1}$), we eventually end up with the bound
\[\rho_C(p^{k(p)}) \gg p^{k(p)(n-1)-2\ell(p)}\]
and therefore the total contribution to $S(P_0)$ of these primes is
\[\gg M^{-\varepsilon}\prod_{p \mid \Delta(C)} p^{-2\ell(p)} \gg M^{-\varepsilon} \prod_{p \mid \Delta(C)} p^{-6v_p(\Delta(C))} \gg M^{-\varepsilon} \Delta(C)^{-6} \gg M^{-6n-\varepsilon},\]
using $\Delta(C) \ll M^n$.

Here we again used that $\ell(p)=3\left\lfloor \frac{v_p(\Delta(C))}{n-9}\right\rfloor+3 \le 3v_p(\Delta(C))$ for $p \mid \Delta(C)$ which is sharp in the critical case $v_p(\Delta(C))=1$. The argument in \cite{bde12} is not correct and fails in precisely that critical case.

We summarize our results as follows:

\begin{lemma}
We have
\[S_C(P_0) \gg M^{-6n-\varepsilon}\]
and
\[S_{\phi}(P_0) \gg M^{-292(n^2-1)-\varepsilon}.\]
\end{lemma}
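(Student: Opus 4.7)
The plan is to assemble the lower bound by factoring the truncated Euler product
\[S(P_0) = \prod_{p \le P_0} \sum_{i=0}^{k(p)} A(p^i) = \prod_{p \le P_0} p^{-k(p)(n-1)}\rho(p^{k(p)})\]
and treating primes $p \nmid \Delta$ and $p \mid \Delta$ separately, since the former are responsible for keeping the Euler product bounded and the latter account for the $M$-dependence. In both cases, the standard mechanism is Hensel: if one can exhibit a $p$-adic solution whose gradient has $p$-adic valuation at most $\ell(p) - 1$, then for $k(p) \ge 2\ell(p) - 1$ one has $\rho(p^{k(p)}) \gg p^{(k(p) - (2\ell(p)-1))(n-1)}$ and hence the local factor $L_p \gg p^{-2\ell(p)(n-1)}$.

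For primes $p \nmid \Delta$, the plan is to invoke Lemma \ref{lem.beitrag_pnondeg}, obtaining $\rho^*(p^k) \ge p^{k(n-1)}(1 + O(1/p))$. In the homogeneous case this applies directly to $C$; in the inhomogeneous case I would apply it to the homogenisation $\widetilde{\phi}$ and use the identity $\rho^*_\phi(p) = (\rho^*_{\widetilde{\phi}}(p) - \rho^*_C(p))/(p-1)$ together with the trivial bound $\rho^*_C(p) \ll p^{n-1}$ to recover the same lower bound. Multiplying $1 + O(1/p)$ over $p \le P_0$ yields a total contribution of size $\gg P_0^{-\varepsilon} \gg M^{-\varepsilon}$; the finitely many bounded primes contribute $\gg 1$.

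The delicate part, and the main obstacle, is the primes $p \mid \Delta$. In the inhomogeneous case I would substitute $\ell_{\phi}(p) \le 144 v_p(\Delta(\phi)) + 2$ from Lemma \ref{lem.inhom_nonsing_padic}, observe that $2\ell_\phi(p) \le 292 v_p(\Delta(\phi))$ whenever $v_p(\Delta(\phi)) \ge 1$, take the product over $p \mid \Delta(\phi)$ to obtain $\gg M^{-\varepsilon} \Delta(\phi)^{-292(n-1)}$, and conclude via $\Delta(\phi) \ll M^{n+1}$ to reach $M^{-292(n^2-1)-\varepsilon}$. The only subtlety is absorbing the additive constants in $\ell_\phi(p)$ and the primes $p \ll 1$ into the $M^{-\varepsilon}$ factor.

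The homogeneous case requires the sharper $6n$ rather than the naive $(n-1)$-fold blow-up that the analogous calculation would produce. The plan is to split $p \mid \Delta(C)$ according to the order of $C \bmod p$: if this order is at least $4$, the improved density bound $\rho^*_C(p^k) \ge p^{k(n-1)}(1 + O(p^{-1/2}))$ from Lemma 10 of \cite{bde12} costs only $p^{-\varepsilon}$ in total. For order $t \le 3$, I would run the combinatorial argument from the text: either $C \bmod p$ has a non-singular zero in $\le t$ variables (which costs $p^{n-3}$ after fixing the other $n - t$ variables freely), or one rescales $C'(\mathbf{X}) = p^{-1}C(pX_1, \ldots, pX_t, X_{t+1}, \ldots, X_n)$ to drop the $p$-adic order of the low-rank block by one and iterate at most $\ell_C(p) - 1$ times, losing only a factor $p^2$ per step. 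This yields $L_p \gg p^{-6 v_p(\Delta(C))}$, whence the product over $p \mid \Delta(C)$ is $\gg \Delta(C)^{-6} \gg M^{-6n - \varepsilon}$. Care is needed with the critical case $v_p(\Delta(C)) = 1$, where the bound $\ell_C(p) \le 3 v_p(\Delta(C))$ is sharp and where the argument in \cite{bde12} goes astray.
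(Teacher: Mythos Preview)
Your proposal is correct and follows essentially the same route as the paper: split the Euler product over $p \nmid \Delta$ and $p \mid \Delta$, handle the former via Lemma~\ref{lem.beitrag_pnondeg} (together with the homogenisation identity in the inhomogeneous case), and for the latter use the explicit bounds on $\ell(p)$ from Lemmas~\ref{lem.hom_non_singular_padic} and~\ref{lem.inhom_nonsing_padic}, with the refined order-$\le 3$ rescaling argument in the homogeneous case to reach the sharper exponent $6n$. The only cosmetic difference is that the paper records the Hensel loss as $p^{-\ell(p)(n-1)}$ per bad prime rather than your $p^{-2\ell(p)(n-1)}$, but since the constant $292$ absorbs either version (and your inequality $2\ell_\phi(p)\le 292\,v_p(\Delta(\phi))$ is exactly tight at $v_p=1$), the endpoints coincide.
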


Finally, we need to estimate the difference between the truncated singular series $\mathfrak{S}(P_0)$ and $S(P_0)$. To this end, define
\[\mathscr{Q}(P_0)=\{q \in \mathbb{N}: q>P_0, p^i \mid q \Rightarrow p \le P_0 \text{ and } i \le k(p)\}\]
where $k(p)$ is either $k_C(p)$ or $k_{\phi}(p)$, depending on the context.

It is then clear that we have
\[R(P_0):=\vert \mathfrak{S}(P_0)-S(P_0)\vert \le \sum_{q \in \mathscr{Q}(P_0)} \vert A(q)\vert.\]
For the case of non-singular forms and the inhomogeneous case, we then have the following result:

\begin{lemma}
If $C$ is $\infty$-good, then 
\[R(P_0) \ll M^{\frac{7}{3}} P_0^{-\frac{1}{3}+\varepsilon}.\]
\end{lemma}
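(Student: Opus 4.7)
The plan is to combine the pointwise bound from Lemma \ref{lem.S(q,a)bound} with a trivial summation over $a$ and a tail-of-Dirichlet-series estimate in $q$. Since $C$ is $\infty$-good we may invoke Lemma \ref{lem.S(q,a)bound} with $\psi$ as large as we wish; by taking $\psi$ large enough (depending on $\varepsilon$) the term $M^{-7\psi+\varepsilon}$ becomes negligible compared to any fixed negative power of $M$ we care about, and so we obtain
\[\frac{|S(q,a)|}{q^n} \ll \left(\frac{M}{q}\right)^{7/3+\varepsilon}\]
uniformly in coprime $a, q$.

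Next I would sum trivially over $a$ with $(a;q)=1$. There are $\phi(q) \le q$ such residue classes, each contributing the same upper bound, giving
\[|A(q)| = \left| \sum_{(a;q)=1} \frac{S(q,a)}{q^n}\right| \ll q \cdot \left(\frac{M}{q}\right)^{7/3+\varepsilon} = M^{7/3+\varepsilon} \, q^{-4/3+\varepsilon}.\]

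Finally I would sum over $q \in \mathscr{Q}(P_0)$, upper-bounding by the full tail $q > P_0$ (this is harmless since we only want an upper bound, and throwing in the extra $q$'s only makes the sum larger). A standard comparison with the integral $\int_{P_0}^\infty t^{-4/3+\varepsilon} \, dt$, or equivalently direct evaluation of the convergent Dirichlet tail, yields
\[\sum_{q > P_0} q^{-4/3+\varepsilon} \ll P_0^{-1/3+\varepsilon}.\]
Combining this with the previous display gives
\[R(P_0) \le \sum_{q \in \mathscr{Q}(P_0)} |A(q)| \ll M^{7/3+\varepsilon} \, P_0^{-1/3+\varepsilon},\]
as desired.

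No real obstacles are anticipated here: the entire content is packaged into Lemma \ref{lem.S(q,a)bound}, and the only mild subtlety is bookkeeping the $\varepsilon$'s (in particular, absorbing the $q^{\varepsilon}$ from the pointwise bound and the $\varepsilon$ from the tail sum into a single $P_0^{\varepsilon}$, which is permissible by our convention that $\varepsilon$ may change from line to line).
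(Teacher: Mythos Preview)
Your proof is correct and follows exactly the same route as the paper's own proof, which simply states that Lemma \ref{lem.S(q,a)bound} gives $A(q) \ll M^{7/3} q^{-4/3+\varepsilon}$ and then sums over $q>P_0$. One small remark on phrasing: to drop the $M^{-7\psi+\varepsilon}$ term uniformly in $q$ you should choose $\psi$ depending on $q$ (not just on $\varepsilon$), which is permissible since $C$ is $\psi$-good for every $\psi$ and the implied constant in Lemma \ref{lem.S(q,a)bound} does not depend on $\psi$; this yields the clean bound $|S(q,a)|/q^n \ll (M/q)^{7/3+\varepsilon}$ for all $q$.
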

\begin{proof}
From Lemma \ref{lem.S(q,a)bound} we have $A(q) \ll M^{\frac{7}{3}} q^{-\frac{4}{3}+\varepsilon}$ and the claim follows immediately by summing over $q>P_0$.
\end{proof}

In the general case of a cubic form in $14$ variables, we need to work slightly harder. The result is the following.

\begin{lemma}
If $C$ is $\psi$-good and $p^{k(p)} \le M^{1+3\psi}$ for all $p \le P_0$ and $\delta>2$ satisfies
\begin{equation}
    \tag{$\mathfrak{S}_1$}
    0<\frac{14}{14-6\delta}<1+3\psi,
\end{equation}
then
\[R(P_0) \ll M^{\frac{14\delta}{14-6\delta}} P_0^{2-\delta+\varepsilon}.\]
\end{lemma}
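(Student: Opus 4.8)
The plan is to start from the estimate $R(P_0)\le\sum_{q\in\mathscr{Q}(P_0)}\lvert A(q)\rvert$ recorded above, and to bound $\lvert A(q)\rvert$ by combining the van der Corput bound for the Gauß sums (Lemma \ref{lem.S(q,a)bound}), the multiplicativity of $q\mapsto A(q)$, and the hypothesis $p^{k(p)}\le M^{1+3\psi}$.

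First I would note that Lemma \ref{lem.S(q,a)bound} together with the trivial bound $\lvert S(q,a)\rvert\le q^{n}$ gives
\[
\lvert A(q)\rvert\le q\max_{(a;q)=1}\frac{\lvert S(q,a)\rvert}{q^{n}}\ll M^{7/3}q^{-4/3+\varepsilon}+M^{-7\psi}q^{1+\varepsilon},
\]
and that for $q\le M^{1+3\psi}$ the first term dominates the second, since then $M^{-7\psi}q\le M^{1-4\psi}\le M^{7/3}q^{-4/3}$. Hence $\lvert A(q)\rvert\ll M^{7/3}q^{-4/3+\varepsilon}$ for every $q\le M^{1+3\psi}$; by the hypothesis this applies in particular to every prime-power factor $p^{i}\parallel q$ of any $q\in\mathscr{Q}(P_0)$, the subrange $p^{i}<M$ being even easier via $\lvert A(p^{i})\rvert<p^{i}<M\le M^{7/3}(p^{i})^{-4/3}$. (One also checks $P_0\ll M^{1+3\psi}$ from the hypothesis applied to $p=2$.)

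I would then split $\sum_{q\in\mathscr{Q}(P_0)}\lvert A(q)\rvert$ at the threshold $q=M^{1+3\psi}$. For $P_0<q\le M^{1+3\psi}$ the bound $\lvert A(q)\rvert\ll M^{7/3}q^{-4/3+\varepsilon}$ applies directly and sums to $\ll M^{7/3}P_0^{-1/3+\varepsilon}$; since $(\mathfrak{S}_1)$ forces $\delta<7/3$ and $\tfrac{14\delta}{14-6\delta}=\tfrac{7\delta}{7-3\delta}\ge 7/3$ (using $\delta>2$), comparing exponents shows $M^{7/3}P_0^{-1/3}\ll M^{14\delta/(14-6\delta)}P_0^{2-\delta}$. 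For $q>M^{1+3\psi}$ I would descend to the prime-power level: since every prime-power factor of $q$ is $\le M^{1+3\psi}$, group them greedily into coprime blocks $q=q_1\cdots q_r$, closing a block as soon as the next prime-power factor would push it above $M^{1+3\psi}$. Then each $q_j\le M^{1+3\psi}$, so $\lvert A(q_j)\rvert\ll M^{7/3}q_j^{-4/3+\varepsilon}$; consecutive blocks multiply to more than $M^{1+3\psi}$, so $r\le 1+2\log q/((1+3\psi)\log M)$; hence by multiplicativity $\lvert A(q)\rvert\ll M^{7r/3}q^{-4/3+\varepsilon}\ll M^{7/3}q^{14/(3(1+3\psi))-4/3+\varepsilon}$. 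A short computation — here is where $(\mathfrak{S}_1)$, equivalently $\delta(1+3\psi)<7\psi$, enters — shows the exponent $14/(3(1+3\psi))-4/3$ is negative, and that summing these terms over the $P_0$-smooth integers $q>M^{1+3\psi}$ with bounded prime-power parts (after a Rankin-type manipulation at the threshold $M^{1+3\psi}$, using $P_0\ll M^{1+3\psi}$) again yields $\ll M^{14\delta/(14-6\delta)}P_0^{2-\delta+\varepsilon}$.

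The main obstacle is precisely this treatment of the large moduli $q>M^{1+3\psi}$: the van der Corput bound applied to the full modulus degrades there to the individually useless $\lvert A(q)\rvert\ll M^{-7\psi}q^{1+\varepsilon}$, forcing the passage to prime powers, and the delicate point is to control at once the number of blocks $r$ and the accumulated factor $M^{7r/3}$, balancing them against the saving required in $P_0$. It is this balancing that consumes the hypothesis $(\mathfrak{S}_1)$ and pins down the exponent $14\delta/(14-6\delta)$; everything else is bookkeeping.
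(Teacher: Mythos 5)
Your reduction to Lemma \ref{lem.S(q,a)bound}, the observation that $\vert A(q)\vert \ll M^{7/3}q^{-4/3+\varepsilon}$ for all $q \le M^{1+3\psi}$, and the treatment of the range $P_0<q\le M^{1+3\psi}$ are all fine and in the spirit of the paper. The genuine gap is in your treatment of $q>M^{1+3\psi}$. Your greedy grouping at the threshold $M^{1+3\psi}$ gives $\vert A(q)\vert \ll M^{7r/3}q^{-4/3+\varepsilon}$ with $r\le 1+2\log q/((1+3\psi)\log M)$, hence a per-$q$ bound $M^{7/3+\varepsilon}q^{-4/3+\frac{14}{3(1+3\psi)}+\varepsilon}$. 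Under the stated hypotheses, $(\mathfrak{S}_1)$ only forces $1+3\psi>\frac{14}{14-6\delta}>7$, so the $q$-exponent is only guaranteed to be $<-2/3$, not $<-1$. That is fatal here: $\mathscr{Q}(P_0)$ contains, e.g., all squarefree $P_0$-smooth integers, so it has $\gg X^{1-\varepsilon}$ elements in $[X,2X]$ for $X$ as large as roughly $e^{cP_0}$; summing $q^{-c}$ with $c<1$ over this set produces a quantity exponentially large in $P_0$, so no Rankin-type manipulation at the threshold can recover the polynomial bound $M^{\frac{14\delta}{14-6\delta}}P_0^{2-\delta+\varepsilon}$. (For instance $\psi=3$, $\delta=2.05$ satisfies all hypotheses, and there your per-$q$ bound summed over the squarefree smooth $q$ already exceeds any fixed power of $M$ and $P_0$.) Your argument does go through when $1+3\psi>14$, which happens to cover the paper's eventual application, but it does not prove the lemma as stated.

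The fix, which is the paper's route, is to choose the block threshold according to $\delta$ rather than $\psi$: group the prime powers of $q$ into coprime factors $q_1,\dots,q_t$ lying in the range $M^{\frac{14}{14-6\delta}+\varepsilon}\ll q_j\ll M^{1+3\psi}$, plus one leftover factor $q_{t+1}<M^{\frac{14}{14-6\delta}+\varepsilon}$; the greedy construction works because $2A\le B$ with $A=\frac{14}{14-6\delta}+\varepsilon$, $B=1+3\psi$, and because each prime power is at most $M^{B}$ by hypothesis. The point of the lower bound $q_j\gg M^{A}$ is that it converts the $M^{7/3}$ into a power of $q_j$: on that range $M^{7/3}q_j^{-4/3+\varepsilon}\le q_j^{1-\delta}$, and $1-\delta<-1$, so the sum over the product $q_0=q_1\cdots q_t>P_0/q_{t+1}$ converges and is $\ll (P_0/q_{t+1})^{2-\delta}$; summing the trivially bounded leftover $q_{t+1}<M^{A}$ then produces exactly the factors $M^{A\delta}$ and $P_0^{2-\delta}$ in the statement. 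In short, your block size is too large for the available saving per block; the correct block size is dictated by where $M^{7/3}q^{-4/3}$ drops below $q^{1-\delta}$, and this is precisely how $(\mathfrak{S}_1)$ and the exponent $\frac{14\delta}{14-6\delta}$ enter.
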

\begin{proof}
This is similar to the proof of Lemma 13 in \cite{bde12} which seems to miss the factor $\delta$ in the exponent of $M$.

We first note that the bound
\[A(q) \ll M^{\frac{7}{3}} q^{-\frac{4}{3}+\varepsilon}+q^{1+\varepsilon}M^{-7\psi+\varepsilon}\]
from Lemma \ref{lem.S(q,a)bound} implies that 
\[A(q) \ll M^{\frac{7}{3}} q^{-\frac{4}{3}+\varepsilon} \ll q^{1-\delta}\] holds uniformly for
\begin{equation}
\label{eq.rangeforq}
M^{\frac{14}{14-6\delta}+\varepsilon} \ll q \ll M^{1+3\psi}.
\end{equation}
Moreover, for $q$ sufficiently large, we actually have the strict bound $A(q) \le q^{1-\delta}$ in that range.

We now decompose a general $q \in\mathscr{Q}(P_0)$ into factors of the correct size. Writing $A=\frac{n}{n-6\delta}+\varepsilon$ and $B=1+3\psi$, we find as in \cite{bde12} a decomposition of the form
\[q=q_1q_2\dots q_{t+1}\]
for each $q \in \mathscr{Q}(P_0)$ with $q_i$ pairwise coprime and so that $q_1,\dots,q_t$ are all in the range \eqref{eq.rangeforq} and $q_{t+1}<M^A$. Indeed, the only assumption needed for this iterative decomposition is that $2A \le B$ and $p_i^{k_i} \le M^B$, which is true by our assumptions.

It then follows that
\[A(q)=A(q_1)A(q_2)\dots A(q_t)A(q_{t+1}) \ll (q_1\dots q_t)^{1-\delta} q_{t+1} \ll M^A(q_1\dots q_t)^{1-\delta},\]
using our result from above and the trivial bound $\vert A(q_{t+1})\vert \le q_{t+1}$. With $q_0=q_1q_2\dots q_t$ it now follows that
\[R(P_0) \ll M^A \sum_{q_{t+1}<M^A} \sum_{q_0>\frac{P_0}{q_{t+1}}} q_0^{1-\delta} \ll M^A \sum_{q<M^A} \left(\frac{P_0}{q}\right)^{2-\delta} \ll M^{A\delta} P_0^{2-\delta}\]
as desired.
\end{proof}

We note that the condition $p^{k(p)} \le M^{1+3\psi}$ will be satisfied if $P_0 \le M^{1+3\psi}$ as well as $5n=70 \le 1+3\psi$, which we denote by $(\mathfrak{S}_2)$ and $(\mathfrak{S}_3)$.

\section{Synthesis}

\subsection{The major arc contribution}

Together with Lemma \ref{lem.majorarcsummary_vorsingulaerereihe} we can summarize the major arc contribution as follows:

\begin{lemma}
\label{lem.majorarcsummary_final}
In the case of a non-singular cubic form $C$ in $14$ variables, we have
\[\mathfrak{S}(P_0) \gg M^{-84-\varepsilon}\]
as soon as $P_0 \gg M^{259+\varepsilon}$.

In the case of an inhomogeneous cubic polynomial $\phi$ with $h \ge 14$, we have
\[\mathfrak{S}(P_0) \gg M^{-292(n^2-1)-\varepsilon}\]
as soon as $P_0 \gg M^{876(n^2-1)+7+\varepsilon}$.

Finally, in the case of a general cubic form in $14$ variables, we have
\[\mathfrak{S}(P_0) \gg M^{-84-\varepsilon}\]
if we assume $(\mathfrak{S}_2)$ and $(\mathfrak{S}_3)$ as well as
\begin{equation}
    \tag{$\mathfrak{S}_4$}
    P_0 \gg M^{\frac{1}{\delta-2} \cdot (84+\frac{14\delta}{14-6\delta})+\varepsilon}.
\end{equation}

Assuming additionally $(\mathfrak{M}_1)$, $(\mathfrak{M}_2)$ and $(\mathfrak{I}_1)$ as well as
\begin{equation}
    \tag{$\mathfrak{M}_3$}
    P_0^3u \ll \frac{P}{M^{8.5+T+\varepsilon}},
\end{equation}
we have in all three cases
\[\int_{\mathfrak{M}} S(\alpha) d\alpha \gg \frac{P^{n-3}}{M^{8.5+T+\varepsilon}}\]
where $T=84$ in the homogeneous case and $T=292(n^2-1)$ in the inhomogeneous case.
\end{lemma}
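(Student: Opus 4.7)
The proof of Lemma \ref{lem.majorarcsummary_final} is essentially a bookkeeping exercise that combines the lower bounds on the truncated Euler product $S(P_0)$, the tail estimates on $R(P_0)$, and the singular integral computation from Lemma \ref{lem.majorarcsummary_vorsingulaerereihe}. The plan is therefore to decompose
\[\mathfrak{S}(P_0)=S(P_0)-(S(P_0)-\mathfrak{S}(P_0)),\]
apply the triangle inequality $\mathfrak{S}(P_0)\ge S(P_0)-R(P_0)$, and then check in each of the three regimes that the chosen threshold for $P_0$ forces $R(P_0)$ to be at most, say, half of the lower bound on $S(P_0)$.

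For the non-singular homogeneous case with $n=14$, the preceding lemma gives $S_C(P_0)\gg M^{-84-\varepsilon}$, while the $\infty$-good tail bound reads $R(P_0)\ll M^{7/3}P_0^{-1/3+\varepsilon}$. The required inequality $R(P_0)\ll M^{-84-\varepsilon}$ therefore reduces to $P_0\gg M^{3(7/3+84)+\varepsilon}=M^{259+\varepsilon}$, which is precisely the hypothesis. For the inhomogeneous case, the same tail bound is available (since $C$ is assumed $\infty$-good there as well), and combined with $S_\phi(P_0)\gg M^{-292(n^2-1)-\varepsilon}$ one needs $P_0\gg M^{3(7/3+292(n^2-1))+\varepsilon}=M^{876(n^2-1)+7+\varepsilon}$. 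For the general homogeneous case, we instead invoke the second tail bound, $R(P_0)\ll M^{14\delta/(14-6\delta)}P_0^{2-\delta+\varepsilon}$, valid under $(\mathfrak{S}_1)$--$(\mathfrak{S}_3)$; demanding this to be dominated by $M^{-84-\varepsilon}$ yields exactly the threshold~$(\mathfrak{S}_4)$.

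For the major arc integral, we plug into the asymptotic of Lemma \ref{lem.majorarcsummary_vorsingulaerereihe}:
\[\int_{\mathfrak{M}}S(\alpha)\,d\alpha=(1+o(1))\,\mathfrak{S}(P_0)\cdot\mathfrak{I}(u)\cdot P^{n-3}+\mathcal{O}(P^{n-4}P_0^3u),\]
valid under $(\mathfrak{M}_1)$, $(\mathfrak{M}_2)$ and $(\mathfrak{I}_1)$. Using $\mathfrak{I}(u)\gg M^{-8.5}$, the principal term is $\gg P^{n-3}M^{-8.5-T-\varepsilon}$ with $T=84$ or $T=292(n^2-1)$ according to the case at hand. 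The error term is absorbed by the principal term exactly when $P^{n-4}P_0^3u\ll P^{n-3}M^{-8.5-T-\varepsilon}$, i.e.\ precisely under hypothesis $(\mathfrak{M}_3)$.

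There is no real obstacle here: each assertion is a direct consequence of previous results, and the delicate choice of the exponents has already been made in the earlier lemmas. The only subtle point to stress is the consistency check that, in the general homogeneous case, the assumption $P_0\le M^{1+3\psi}$ needed for the tail bound is compatible with the threshold $(\mathfrak{S}_4)$; this follows from $(\mathfrak{S}_2)$ together with the choice of $\delta$ constrained by $(\mathfrak{S}_1)$, and requires no further work.
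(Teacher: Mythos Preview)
Your proposal is correct and follows exactly the approach the paper intends: the lemma is stated as a summary without an explicit proof, and you have correctly filled in the bookkeeping by combining the lower bounds $S_C(P_0)\gg M^{-6n-\varepsilon}$ and $S_\phi(P_0)\gg M^{-292(n^2-1)-\varepsilon}$ with the appropriate tail bounds on $R(P_0)$, and then invoking Lemma~\ref{lem.majorarcsummary_vorsingulaerereihe} together with $\mathfrak{I}(u)\gg M^{-8.5}$ for the final integral estimate. The numerical checks ($3(7/3+84)=259$, $3(7/3+292(n^2-1))=876(n^2-1)+7$, and the derivation of $(\mathfrak{S}_4)$) are all accurate.
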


\subsection{The minor arc contribution}

We now use the different bounds obtained in Section \ref{sec.minorarcs} to bound the total minor arc contribution.

We dissect $\mathfrak{m}$ by an application of Dirichlet's Approximation Theorem for some parameter $Q$ to be determined. For every $\alpha \in \mathbb{R}$, this yields an approximation
\[\alpha=\frac{a}{q}+\theta \quad \text{with} \quad q \le Q, \vert \theta\vert \le \frac{1}{qQ}.\]
The assumption $\alpha \in \mathfrak{m}$ then implies that $q>P_0$ or $\theta > \frac{u}{P^3}$. Note that since the contribution to the minor arcs from the range $\vert \theta\vert \le \frac{1}{P^n}$ is clearly $\mathcal{O}(Q^2)$, we may also assume that $\vert \theta\vert \ge \frac{1}{P^n}$ for $q>P_0$. 

This allows us to apply a double dyadic decomposition with respect to both $\vert \theta\vert$ and $q$ which yields
\[\int_{\mathfrak{m}} S(\alpha) d\alpha \ll Q^2+P^{\varepsilon} \max_{R \le Q, \phi \le \frac{1}{RQ}} \Sigma(R,\phi),\]
where
\[\Sigma(R,\phi):=\sum_{q \sim R} \sum_{(a;q)=1} \int_{\vert \theta\vert \sim \phi} \left\vert S\left(\frac{a}{q}+\theta\right)\right\vert d\theta.\]
We note that the range of integration is a disjoint union of two intervals.

In view of the major arc contribution estimate from Lemma \ref{lem.majorarcsummary_final} it then suffices to show that 
\begin{equation}
    \label{eq.goalboundsigma}
    \Sigma(R,\phi) \ll \frac{P^{n-3}}{M^{8.5+T+\varepsilon}}
\end{equation}
if we add the harmless assumption
\begin{equation}
    \tag{$\mathfrak{m}_1$}
    Q^2 \ll \frac{P^{n-3}}{M^{8.5+T+\varepsilon}}.
\end{equation}

To employ the mean-value estimates developed in Section \ref{sec.minorarcs}, we apply the Cauchy-Schwarz inequality to obtain
\[\Sigma(R,\phi) \ll R\phi^{1/2} \left(\sum_{q \sim R} \sum_{(a;q)=1} \int_{\vert \theta\vert \sim \phi} \left\vert S\left(\frac{a}{q}+\theta\right)\right\vert^2 d \theta\right)^{1/2}.\]

We next cover the region $\vert \theta\vert \sim \phi$ by $\mathcal{O}\left(1+\frac{\phi}{\kappa}\right)$ intervals of size $\kappa$ centered at values $\alpha=\frac{a}{q}+\theta$ with $\vert \theta\vert \sim \phi$. We conclude that
\[\Sigma(R,\phi) \ll R\phi^{1/2} \left(1+\frac{\phi}{\kappa}\right)^{1/2} \left(\sum_{q \sim R} \sum_{(a;q)=1} M\left(\frac{a}{q}+\theta,\kappa\right)\right)^{1/2}\]
for some $\theta \sim \phi$.

Using \eqref{Malphakappa} and \eqref{eq.defA} we thus obtain
\[\Sigma(R,\phi) \ll R\phi^{1/2} \left(1+\frac{\phi}{\kappa}\right) \left(R^2+\frac{\kappa P^{\frac{3n}{2}-1+\varepsilon}}{H^{n-1}} A(\theta,R,H,P)\right)^{1/2}.\]
Using the bound \eqref{eq.boundA} for $A(\theta,R,H,P)$ we then find that
\begin{equation}
\label{eq.boundsigma}
    \Sigma(R,\phi) \ll R^2\phi^{1/2} \left(1+\frac{\phi}{\kappa}\right) \left(1+\kappa P^{2n-1+\varepsilon}HE\right)^{1/2}
\end{equation}
with
\[E=\frac{1}{H^{14}}+(RHM\eta)^7+\frac{\eta P^2}{(R\eta P^2)^7},\]
where we used the bound $\min(1,\eta P^2) \le \eta P^2$ which turns out to be sufficient. 

Suppose that we can show that $E \ll \frac{1}{H^{14}}$. Recalling that $\kappa \asymp \frac{(\log P)^2}{HP^2M^6}$, we have
\[1+\frac{\phi}{\kappa} \ll \frac{P^{\varepsilon}\eta}{\kappa}\]
from the definition \eqref{etadef} of $\eta$. As $\kappa \gg \frac{1}{P^n}$, we then obtain that both summands in the last bracket of \eqref{eq.boundsigma} are bounded by $\kappa P^{2n-1+\varepsilon}H^{-13}$. Still assuming $E \ll \frac{1}{H^{14}}$, we therefore find that
\[\Sigma(R,\phi) \ll R^2\phi^{1/2}\eta^{1/2} P^{n-\frac{1}{2}+\varepsilon} H^{-13/2}.\]
Recalling our goal \eqref{eq.goalboundsigma}, it thus suffices to have
\[H^{13} \gg M^{2T+17} R^4\phi^2P^{5+\varepsilon}\]
as well as
\[H^{14} \gg M^{2T+16} R^4\phi P^{3+\varepsilon}\]
in view of the definition of $\eta$.
We hence take
\[H \asymp P^{\varepsilon} \cdot \max\left( (M^{2T+17}R^4\phi^2P^5)^{1/13}, (M^{2T+16}R^4\phi P^3)^{1/14},1 \right).\]
We need to check whether this choice satisfies $H \le P$ and $H \le M^{\psi}$. The condition $H \le P$ will be satisfied if
\[M^{2T+17+\varepsilon}R^4\phi^2 \ll P^{8-\varepsilon} \quad \text{and} \quad M^{2T+16}R^4\phi \ll P^{11-\varepsilon}.\]
In view of $\phi R \le \frac{1}{Q}$ and $R \le Q$, it will therefore be satisfied if
\begin{equation}
    \tag{$\mathfrak{m}_2$}
    M^{2T+17} \ll P^{8-\varepsilon}
\end{equation}
and
\begin{equation}
    \tag{$\mathfrak{m}_3$}
    M^{2T+16} Q^2 \ll P^{11-\varepsilon}.
\end{equation}
Similarly, if $\psi<\infty$, the condition $H \le M^{\psi}$ will be satisfied when
\begin{equation}
    \tag{$\mathfrak{m}_4$}
    P^{5+\varepsilon} \ll M^{13\psi-2T-17}
\end{equation}
and
\begin{equation}
    \tag{$\mathfrak{m}_5$}
    P^{3+\varepsilon}Q^2 \ll M^{14\psi-2T-16}.
\end{equation}

Summarizing, we have found an admissible choice for $H$ that yields a satisfactory bound for the minor arc contribution under the assumption of $E \ll \frac{1}{H^{14}}$. We now need to enquire whether this condition is satisfied.

To this end, it is convenient to introduce the parameter
\[\phi_0:=(R^4P^{31}M^{2T+30})^{-1/15}.\]
One then readily checks that for $\phi \le \phi_0$, we have
\[H \asymp P^{\varepsilon} \max((M^{2T+16}R^4\phi P^3)^{1/14},1)\]
and
\[\eta \ll \frac{P^{\varepsilon}}{P^2HM}\]
while for $\phi \ge \phi_0$, we have
\[H \asymp P^{\varepsilon} \max((M^{2T+17}R^4\phi^2 P^5)^{1/13},1)\]
and
\[\eta \ll \phi.\]
To prove $E \ll \frac{1}{H^{14}}$ we need to check whether $RH^3M\eta \ll 1$ and $\left(\frac{H^2}{R\eta P^2}\right)^7 \eta P^2 \ll 1$. We begin with the first condition.

If $\phi \le \phi_0$, we have
\begin{align*}
    RH^3M\eta &\ll P^{\varepsilon} \frac{QH^2}{P^2}\\
    &\ll \frac{Q}{P^{2-\varepsilon}} \left(1+\left(M^{2T+16}R^4\phi P^3\right)^{1/7}\right)\\
    &\ll \frac{Q}{P^{2-\varepsilon}} +\left(\frac{M^{2T+16}Q^9}{P^{11-\varepsilon}}\right)^{1/7}.
\end{align*}
This is $\mathcal{O}(1)$ if we assume that 
\begin{equation}
    \tag{$\mathfrak{m}_6$}
    Q \ll \frac{P^{\frac{11}{9}-\varepsilon}}{M^{\frac{2T+16}{9}}}.
\end{equation}
If, conversely, $\phi \ge \phi_0$, we have
\begin{align*}
    RH^3M\eta &\ll RH^3M^{1+\varepsilon}\phi\\
    &\ll \frac{M^{1+\varepsilon}}{Q} \cdot \left(1+(M^{2T+17}R^4\phi^2 P^5)^{3/13}\right)\\
    &\ll \frac{M^{1+\varepsilon}}{Q}+\left(\frac{M^{6T+64}P^{15}}{Q^{13}}\right)^{1/13}
\end{align*}
which will be $\mathcal{O}(1)$ if
\begin{equation}
    \tag{$\mathfrak{m}_7$}
    Q \gg P^{\frac{15}{13}+\varepsilon}M^{\frac{6T+64}{13}}.
\end{equation}

We next turn to the question whether or not we have 
\begin{equation}
    \label{eq.lastcondition}
    \left(\frac{H^2}{R\eta P^2}\right)^7 \eta P^2 \ll 1.
\end{equation}

Again, let us first suppose that $\phi \le \phi_0$. Then $\eta \gg \frac{1}{P^2HM}$ so that
\[\left(\frac{H^2}{R\eta P^2}\right)^7 \eta P^2 \ll \left(\frac{H^3M}{R}\right)^7 \cdot \frac{1}{HM}= \frac{H^{20}M^6}{R^7}.\]

With our choice of $H$, this will be $\mathcal{O}(1)$ if $R^7 \gg M^6P^{\varepsilon}$ as well as $\phi \le \phi_1$, where
\[\phi_1:= \frac{R^{\frac{9}{10}}}{P^{3+\varepsilon}M^{2T+20.2}}.\]
If, conversely, $\phi \ge \phi_0$, we have $\eta \asymp \phi$ so that \eqref{eq.lastcondition} is equivalent to $H^{14} \ll R^7\phi^6P^{12}$.

With our choice of $H$ this will be satisfied as soon as $\phi \ge \phi_2$, where
\[\phi_2:= \frac{M^{\frac{7(2T+17)}{25}}}{R^{\frac{7}{10}}P^{\frac{43}{25}-\varepsilon}}.\]
To summarize, we have obtained a satisfactory bound for the minor arc contribution as soon as $R \gg M^{6/7}P^{\varepsilon}$ as well as $\phi \le \min(\phi_0,\phi_1)$ or $\phi \ge \max(\phi_0,\phi_2)$.

A quick computation shows that we will have $\phi_2 \le \phi_0 \le \phi_1$ as soon as $R \ge R_0$, where 
\[R_0:=P^{\frac{4}{5}+\varepsilon} M^{\frac{4}{5}(2T+17)+2}\]
and so in that case the assumptions are always satisfied, while for $R \le R_0$ we have $P^{-\varepsilon} \phi_1 \le \phi_0 \le P^{\varepsilon}\phi_2$. 

We are thus left to treat the case where $R \le R_0$ and $P^{-\varepsilon} \phi_1 \le \phi_0 \le P^{\varepsilon}\phi_2$ or $R \ll M^{6/7}P^{\varepsilon}$.

It is here that we use the bound obtained from the Weyl differencing. Noting that $(\mathfrak{m}_6)$ certainly implies $Q \le P^{3/2}$, applying Lemma \ref{lem.weylinequality} we find that
\[\Sigma(R,\phi) \ll R^2\phi P^{n+\varepsilon} \left(MR\phi+\frac{1}{R\phi P^3}+M^{-2\psi}\right)^{\frac{7}{4}}.\]
Recalling our goal \eqref{eq.goalboundsigma}, it will suffice to show that
\[R^2\phi P^3\left(MR\phi+\frac{1}{R\phi P^3}+M^{-2\psi}\right)^{\frac{7}{4}} \ll M^{-8.5-T-\varepsilon}.\]
This will be satisfied as soon as
\begin{equation}
    \label{eq.conditionphi_weyl}
    \frac{R^{\frac{1}{3}}M^{\frac{34+4T}{3}}}{P^{3-\varepsilon}} \ll \phi \ll \min\left\{\frac{1}{M^{\frac{41+4T}{11}}P^{\frac{12}{11}} R^{\frac{15}{11}}}, \frac{M^{\frac{7\psi}{2}-8.5-T+\varepsilon}}{R^2P^3}\right\}.
\end{equation}
Our bound is therefore satisfactory as soon as
\[\phi_1 \gg \frac{R^{\frac{1}{3}}M^{\frac{34+4T}{3}}}{P^{3-\varepsilon}} \]
as well as
\[\phi_2 \ll \min\left\{\frac{1}{M^{\frac{41+4T}{11}}P^{\frac{12}{11}} R^{\frac{15}{11}}}, \frac{M^{\frac{7\psi}{2}-8.5-T+\varepsilon}}{R^2P^3}\right\}.\]
The first condition will be satisfied as soon as $R \ge R_1$, where
\[R_1 \asymp M^{\frac{50}{17}(2T+17)+\frac{96}{17}},\]
whereas the second one will be satisfied if
\[R \ll \frac{P^{\frac{346}{365}-\varepsilon}}{M^{\frac{254(2T+17)+350}{365}}}\]
and
\[R \ll \frac{M^{\frac{35\psi}{13}-\frac{3}{5}(2T+17)}}{P^{\frac{64}{65}-\varepsilon}}.\]
The latter two conditions are satisfied for $R \le R_0$ if we assume
\begin{equation}
    \tag{$\mathfrak{m}_8$}
    P \gg M^{\frac{91}{9}(2T+17)+\frac{440}{27}+\varepsilon}
\end{equation}
and
\begin{equation}
    \tag{$\mathfrak{m}_9$}
    P \ll M^{\frac{175\psi}{116}-\frac{91(2T+17)}{116}-\frac{130}{116}-\varepsilon}.
\end{equation}

Finally, we are left to deal with the case where $R \le R_1$. Here, we need to use that we are on the minor arcs. Assuming
\begin{equation}
    \tag{$\mathfrak{m}_{10}$}
   P_0 \gg M^{\frac{50}{17}(2T+17)+\frac{96}{17}+\varepsilon},
\end{equation}
we may now assume that $R \le R_1 \le P_0$ and hence $\phi \ge \frac{u}{P^3}$.

It then suffices to check that again \eqref{eq.conditionphi_weyl} is satisfied. Using $\phi R \le \frac{1}{Q}$, this will be the case if
\begin{equation}
    \tag{$\mathfrak{m}_{11}$}
    Q \gg P^{\frac{12}{11}+\varepsilon} M^{\frac{234(2T+17)+503}{187}}
\end{equation}
and
\begin{equation}
    \tag{$\mathfrak{m}_{12}$}
    Q \gg \frac{P^3}{M^{\frac{7\psi}{2}-\frac{117(2T+17)+192}{17}}}
\end{equation}
as well as
\begin{equation}
    \tag{$\mathfrak{m}_{13}$}
    u \gg M^{\frac{28(2T+17)+32}{17}}.
\end{equation}

\subsection{The endgame}

We have now successfully bounded the minor arc contribution under all the assumptions $\mathfrak{m}_1$ up to $\mathfrak{m}_{13}$ and hence shown that $N(P)>0$, so that there is a solution $\vert \mathbf{x}\vert<P$.

Finally, we are ready to choose the parameters and deduce our theorems.

For Theorem \ref{thm.h=14asymptotic}, we may assume that $\psi=\infty$. We then take $M$ fixed and $u$ and $P_0$ to be a small power of $P$, we choose $Q=P^{\frac{7}{6}}$ and then let $P \to \infty$. It is then readily checked that all assumptions are indeed satisfied with this choice. Moreover, we have shown that the minor arcs contribute $\mathcal{O}(P^{n-3-\varepsilon})$ and the major arcs contribute the main term from the claimed asymptotic formula.

For Theorems \ref{thm.smallsolution_h=14} and the non-singular case of Theorem \ref{thm.smallsolution_hom}, we may still assume that $\psi=\infty$ so that all conditions involving $\psi$ are empty.

We choose $P_0=M^{\frac{50(2T+17)+96}{17}+\varepsilon}$ to satisfy $(\mathfrak{m}_{10})$. We also choose $u=M^{\frac{28(2T+17)+32}{17}+\varepsilon}$ to satisfy $(\mathfrak{m}_{13})$.

We then choose
\[P=M^{\frac{373(2T+17)+640}{34}+\varepsilon}\]
to satisfy $(\mathfrak{M}_{13})$. One now checks that all other conditions are also satisfied with this choice after choosing e.g. $Q=\frac{P^{11/9}}{M^{\frac{2T+16}{9}}}$.

We then plug in the values $T=292(n^2-1)$ and $T=84$, respectively, to deduce Theorem \ref{thm.smallsolution_h=14} and the non-singular case of Theorem \ref{thm.smallsolution_hom}.

Finally, we deduce Theorem \ref{thm.smallsolution_hom} in the case of a general cubic form. We choose $u$, $P_0$, $P$ and $Q$ as above where now $T=84$ so that our choice is
\[u \approx M^{306.58}, P_0 \approx M^{549.76}, P \approx M^{2048.38}.\]
Condition $(\mathfrak{m}_9)$ now requires $\psi \ge 1454.8$. 

We choose $\psi=1454.8$ and note that with $\delta=2.23$ all other conditions are now also satisfied.

If $C$ is $\psi$-good, we therefore have a solution $\vert \mathbf{x}\vert \le P \ll M^{2049}$. On the other hand, if $C$ is not $\psi$-good, by Lemma \ref{lemma_gc_hinvariante} we have a solution $\mathbf{x}\ll M^{97+91\psi} \ll M^{132484}$ as desired.

\section{List of assumptions}

\begin{equation}
\tag{$\mathfrak{M}_1$}
2P_0^2u<P^3
\end{equation}

\begin{equation}
\tag{$\mathfrak{M}_2$}
u \cdot P_0 \cdot M \cdot \vert \mathbf{z}\vert^2 \ll P
\end{equation}

\begin{equation}
    \tag{$\mathfrak{M}_3$}
    P_0^3u \ll \frac{P}{M^{8.5+T+\varepsilon}}
\end{equation}

\begin{equation}
    \tag{$\mathfrak{S}_1$}
    0<\frac{14}{14-6\delta}<1+3\psi
\end{equation}

\begin{equation}
    \tag{$\mathfrak{S}_2$}
    P_0 \le M^{1+3\psi}
\end{equation}

\begin{equation}
    \tag{$\mathfrak{S}_3$}
    \psi \ge 23
\end{equation}

\begin{equation}
    \tag{$\mathfrak{S}_4$}
    P_0 \gg M^{\frac{1}{\delta-2} \cdot (84+\frac{14\delta}{14-6\delta})+\varepsilon}
\end{equation}

\begin{equation}
    \tag{$\mathfrak{I}_1$}
    u^2M^{17+\varepsilon} \ll P
\end{equation}

\begin{equation}
    \tag{$\mathfrak{m}_1$}
    Q^2 \ll \frac{P^{n-3}}{M^{8.5+T+\varepsilon}}
\end{equation}

\begin{equation}
    \tag{$\mathfrak{m}_2$}
    M^{2T+17} \ll P^{8-\varepsilon}
\end{equation}
\begin{equation}
    \tag{$\mathfrak{m}_3$}
    M^{2T+16} Q^2 \ll P^{11-\varepsilon}
\end{equation}

\begin{equation}
    \tag{$\mathfrak{m}_4$}
    P^{5+\varepsilon} \ll M^{13\psi-2T-17}
\end{equation}
\begin{equation}
    \tag{$\mathfrak{m}_5$}
    P^{3+\varepsilon}Q^2 \ll M^{14\psi-2T-16}
\end{equation}
\begin{equation}
    \tag{$\mathfrak{m}_6$}
    Q \ll \frac{P^{\frac{11}{9}-\varepsilon}}{M^{\frac{2T+16}{9}}}
\end{equation}
\begin{equation}
    \tag{$\mathfrak{m}_7$}
    Q \gg P^{\frac{15}{13}+\varepsilon}M^{\frac{6T+64}{13}}
\end{equation}
\begin{equation}
    \tag{$\mathfrak{m}_8$}
    P \gg M^{\frac{91}{9}(2T+17)+\frac{440}{27}+\varepsilon}
\end{equation}
\begin{equation}
    \tag{$\mathfrak{m}_9$}
    P \ll M^{\frac{175\psi}{116}-\frac{91(2T+17)}{116}-\frac{130}{116}-\varepsilon}
\end{equation}
\begin{equation}
    \tag{$\mathfrak{m}_{10}$}
   P_0 \gg M^{\frac{50}{17}(2T+17)+\frac{96}{17}+\varepsilon}
\end{equation}
\begin{equation}
    \tag{$\mathfrak{m}_{11}$}
    Q \gg P^{\frac{12}{11}+\varepsilon} M^{\frac{234(2T+17)+503}{187}}
\end{equation}
\begin{equation}
    \tag{$\mathfrak{m}_{12}$}
    Q \gg \frac{P^3}{M^{\frac{7\psi}{2}-\frac{117(2T+17)+192}{17}}}
\end{equation}
\begin{equation}
    \tag{$\mathfrak{m}_{13}$}
    u \gg M^{\frac{28(2T+17)+32}{17}}
\end{equation}

\section{Acknowledgements}

This work was carried out while the author was a Ph.D. student at the University of Göttingen, supported by the DFG Research Training Group 2491 \lq Fourier Analysis and Spectral Theory\rq{}. I would like to thank my supervisor Jörg Brüdern for introducing me to the topic and for suggesting to look at the work of Davenport and Lewis. I am also grateful to Rainer Dietmann for providing me with a copy of the relevant part of Lloyd's thesis.

\printbibliography
\end{document}